\numberwithin{equation}{section}
\newtheorem{lemma}[equation]{Lemma}
\newtheorem{thm}[equation]{Theorem}
\newtheorem{cor}[equation]{Corollary}
\newtheorem{prop}[equation]{Proposition}
\newtheorem{question}[equation]{Question}
\newtheorem{claim}[equation]{Claim}
\theoremstyle{definition}
\newtheorem{defi}[equation]{Definition}
\theoremstyle{remark}
\newtheorem*{remark}{Remark}
\newtheorem*{notation}{Notation}
\renewcommand{\bar}[1]{#1\llap{$\overline{\phantom{\rm#1}}$}}
\DeclareMathOperator{\Gal}{{Gal}}
\DeclareMathOperator{\End}{{End}}
\newcommand{\N}{{\mathbb N}}
\newcommand{\Z}{{\mathbb Z}}
\newcommand{\Q}{{\mathbb Q}}
\newcommand{\Qbar}{{\bar\Q}}
\newcommand{\Kbar}{{\bar K}}
\newcommand{\Ebar}{\bar{E}}
\newcommand{\Fbar}{{\bar F}}
\newcommand{\C}{{\mathbb C}}
\newcommand{\A}{{\mathbb A}}
\newcommand{\G}{{\mathbb G}}
\newcommand{\OO}{{\mathcal O}}
\newcommand{\J}{{\mathcal J}}
\newcommand{\scL}{{\mathcal L}}
\newcommand{\hh}{\widehat{h}}
\newcommand{\tth}{^{\operatorname{th}}}
\newcommand{\iter}[1]{^{\langle #1\rangle}}
\newcommand{\iterp}[1]{\iter{#1}}
\begin{document}



\title{Linear relations between polynomial orbits}

\author{Dragos Ghioca}

\address{
Dragos Ghioca \\
Department of Mathematics \& Computer Science\\
University of Lethbridge \\
Lethbridge, AB T1K 3M4\\
Canada 
}

\email{dragos.ghioca@uleth.ca}

\author{Thomas J. Tucker}

\address{
Thomas Tucker\\
Department of Mathematics\\
Hylan Building\\
University of Rochester\\
Rochester, NY 14627\\
USA
}

\email{ttucker@math.rochester.edu}

\author{Michael E. Zieve}
\address{
Michael E. Zieve\\
Department of Mathematics\\
Hill Center--Busch Campus\\
Rutgers, The State University of New Jersey\\
110 Frelinghuysen Road\\
Piscataway, NJ 08854--8019\\
USA
}
\email{zieve@math.rutgers.edu}
\urladdr{www.math.rutgers.edu/$\sim$zieve}

\begin{abstract}
We study the orbits of a polynomial $f\in\C[X]$, namely the sets
$\{\alpha, f(\alpha), f(f(\alpha)), \dots\}$ with $\alpha\in\C$.
We prove that if two nonlinear complex polynomials $f,g$ have
orbits with infinite intersection, then $f$ and $g$ have a common iterate.
More generally, we describe the intersection of any line in $\C^d$ with a
$d$-tuple of orbits of nonlinear polynomials, and we formulate a question which
generalizes both this result and the Mordell--Lang conjecture.
\end{abstract}

\date{\today}

\maketitle


\section{Introduction}
\label{intro}

One of the main topics in complex dynamics is the behavior of complex numbers
$x$ under repeated application of a polynomial
$f\in\C[X]$.  The basic object of study is the orbit
$\OO_f(x):=\{x,f(x),f(f(x)),\dots\}$.  The theme of many results is that
there are hidden interactions between different orbits of a polynomial $f$:
for\ instance, the crude geometric shape of all orbits is determined by
the orbits of critical points \cite[\S 9]{Blanchard}.
However, the methods of complex dynamics
say little about the interaction between orbits of distinct polynomials.
In this paper we determine when two such orbits have infinite intersection.

\begin{thm}
\label{mainthm}
Pick $x,y\in\C$ and nonlinear $f,g\in\C[X]$.
If $\OO_f(x)\cap\OO_g(y)$ is infinite, then $f$ and $g$ have a
common iterate.
\end{thm}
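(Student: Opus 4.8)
The plan is to show that the set
\[
S:=\{(m,n)\in\Z_{\ge0}^2\col f^m(x)=g^n(y)\}
\]
contains an infinite arithmetic progression; the theorem follows quickly from this. First reductions: conjugating $(f,g,x,y)$ by a common affine map changes neither hypothesis nor conclusion, so we may take $f,g$ monic. Since $\OO_f(x)$ and $\OO_g(y)$ are infinite, $x$ is not $f$-preperiodic and $y$ is not $g$-preperiodic, so $m\mapsto f^m(x)$ and $n\mapsto g^n(y)$ are injective; in particular any infinite arithmetic progression contained in $S$, say $\{(m_0+ta,n_0+tb):t\ge0\}$, must have $a\ge1$ and $b\ge1$ (fixing one coordinate pins the other). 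Given such a progression, set $v_t:=f^{m_0+ta}(x)=g^{n_0+tb}(y)$. Then $f\iter{a}(v_t)=v_{t+1}=g\iter{b}(v_t)$ for all $t\ge0$, and the $v_t$ are pairwise distinct, so the polynomial $f\iter{a}-g\iter{b}$ has infinitely many zeros and therefore vanishes identically. Thus $f\iter{a}=g\iter{b}$ is a common iterate, which is what we want.

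It remains to locate an arithmetic progression inside $S$, and this is the heart of the matter --- it is here that the analogy with the Mordell--Lang conjecture is felt. The approach I would take is to move to a $p$-adic setting: embed the finitely generated field generated by the coefficients of $f,g$ and by $x,y$ into $\C_p$ --- or descend to a number field and choose a prime of good reduction --- conjugate so that $f$ and $g$ reduce well, and pass to iterates $f^N,g^N$ that carry the residue disks of $x$ and $y$ into themselves by power series contracting toward fixed points. Tracking which residue disks the points $f^m(x)$ and $g^n(y)$ occupy splits $\Z_{\ge0}$ into residue classes modulo $N$; on a class $m=m_0+Ns$ the map $s\mapsto f^{m_0+Ns}(x)$ extends to a $p$-adic analytic function of $s\in\Z_p$, and likewise for $g$. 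Independently, a rigidity input over $\Qbar$ --- comparing $\hh_f(f^m(x))=(\deg f)^m\,\hh_f(x)$ with $\hh_g(g^n(y))=(\deg g)^n\,\hh_g(y)$, and using that the two canonical heights agree with the Weil height up to bounded error --- forces $n$ to be a fixed affine function of $m$ along any infinite subset of $S$. Feeding this relation into the two $p$-adic analytic families collapses the problem to a one--variable identity $\Phi(s)=\Psi(s)$ between $p$-adic analytic functions that holds for infinitely many integers $s$, hence identically on $\Z_p$; unwinding this identity exhibits a full arithmetic progression lying in $S$.

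The step I expect to be the main obstacle is exactly this passage from ``infinitely many coincidences'' to ``a whole progression of coincidences'': a naive specialization to $\Qbar$ or to $\C_p$ would preserve only finitely many of the relations $f^m(x)=g^n(y)$, so one must first use the height and degree rigidity to see that the coincidences are organized along a line before one is entitled to specialize, and then one must extract from the resulting $p$-adic functional identity an \emph{algebraic} statement --- that a progression actually lies in $S$ --- which is where the structure theory of polynomial decompositions (Ritt's theorems) and the precise comparison of $\hh_f$ and $\hh_g$ have to be combined with care. Once a progression is in hand, the conclusion $f\iter{a}=g\iter{b}$ is immediate, as explained above.
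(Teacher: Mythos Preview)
Your reduction --- that an arithmetic progression $\{(m_0+ta,n_0+tb):t\ge 0\}$ inside $S$ forces $f\iter{a}=g\iter{b}$ --- is correct and clean. The difficulty, as you anticipate, is entirely in producing that progression, and here the proposal has genuine gaps rather than just missing details.

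First, the height step does not yield what you claim. Over a number field the canonical heights $\hh_f(x)$ and $\hh_g(y)$ are real numbers with no reason to be rational, so the inequality $|(\deg f)^m\hh_f(x)-(\deg g)^n\hh_g(y)|\le C$ does not by itself force an exact affine relation between $m$ and $n$; one needs rationality (or some Diophantine input like Baker's theorem) to pass from ``bounded'' to ``takes finitely many values''. The paper in fact carries out the height argument you sketch, but only over function fields, where the heights \emph{are} rational (Lemma~\ref{rational heights}); the authors explicitly note that this approach does not work when everything is defined over a number field, and over $\Qbar$ they use a completely different method.

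Second, and more seriously, the $p$-adic interpolation step is not available in the generality you need. Writing $s\mapsto (f^N)^s(z_0)$ as a $p$-adic analytic function of $s\in\Z_p$ requires $f^N$ to act on the relevant residue disk as something close to the identity (this is the hypothesis in Bell's theorem and its descendants, which apply to \emph{unramified} maps). For a polynomial of degree $>1$, the relevant residue disk may well contain a critical point of $\bar f^N$, in which case $f^N$ is a genuine contraction there; then $(f^N)^s(z_0)$ converges to the attracting fixed point $p$-adically super-exponentially in $s$, and no nonconstant $p$-adic analytic function of $s$ can do that. So the passage from ``infinitely many $s$'' to ``all $s$ in $\Z_p$'' breaks down precisely in the ramified situations that one cannot avoid here.

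The paper's proof takes a completely different route: it never tries to find a progression in $S$. Instead, Siegel's theorem shows that each $f\iter{n}(X)-g\iter{n}(Y)$ has a genus-zero factor with at most two points at infinity; the Bilu--Tichy classification of such factors, combined with a delicate analysis of polynomial decompositions (Theorem~\ref{iterates} and Propositions~\ref{gensec4} and~\ref{gensec3}), forces either a common iterate or a simultaneous linear conjugacy of $(f,g)$ to $(\alpha X^r,\beta X^s)$ or $(\pm T_r,\pm T_s)$. These residual cases are then handled directly via Lang's $S$-unit theorem (Proposition~\ref{special case works}). The Ritt-type decomposition results you allude to are indeed the crux, but they are used structurally on $f$ and $g$ themselves, not as a patch on a $p$-adic argument.
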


Here the $n\tth$ iterate $f\iter{n}$ of $f$ is defined as the $n\tth$ power of
$f$ under the operation $a(X)\circ b(X):=a(b(X))$.  We say $f$ and $g$ have a
common iterate if $f\iter{n}=g\iter{m}$ for some $n,m>0$.  Note that if
$f,g\in\C[X]$ have a common iterate, and $\OO_f(x)$ is infinite, then
$\OO_f(x)\cap\OO_g(y)$ is infinite whenever it is nonempty.  The polynomials
$f,g$ with a common iterate were determined by Ritt~\cite{Rittit}: up to
composition with linears, $f$ and $g$ must themselves be iterates of a common
polynomial $h\in\C[X]$ (for a more precise formulation see
Proposition~\ref{Rittit}).  The nonlinearity hypothesis in Theorem~\ref{mainthm}
cannot be removed, since for instance $\OO_{X+1}(0)$ contains $\OO_{X^2}(2)$.

In our previous paper \cite{lines}, we proved Theorem~\ref{mainthm} in the
special case $\deg(f)=\deg(g)$.  In the present paper we prove
Theorem~\ref{mainthm} by combining the result from \cite{lines}
with several new ingredients.

We can interpret Theorem~\ref{mainthm} as describing when the Cartesian product
$\OO_f(x)\times\OO_g(y)$ has infinite intersection with the diagonal
$\Delta:=\{(z,z):z\in\C\}$.  The conclusion says that this occurs just when
there exist positive integers $n,m$ such that $\Delta$ is preserved by the map
$(f\iter{n},g\iter{m})\colon\C^2\to\C^2$ defined by $(z_1,z_2)\mapsto
(f\iter{n}(z_1),g\iter{m}(z_2))$.  Our next result generalizes this to
products of more than two orbits:

\begin{thm}
\label{manyvariablethm}
Let $d$ be a positive integer, let $x_1,\dots,x_d\in\C$,
let $L$ be a line in\/ $\C^d$, and let
$f_1, \dots, f_d\in \C[X]$ satisfy $\deg(f_i) > 1$ for $i=1, \dots, d$.
If the Cartesian product $\OO_{f_1}(x_1)\times\dots\times\OO_{f_d}(x_d)$
has infinite intersection with $L$,
then there are nonnegative integers $m_1, \dots, m_d$ such that
$\sum_{i=1}^d m_i > 0$ and
\[
(f_1\iter{m_1}, \dots, f_d\iter{m_d})(L) = L.
\]
\end{thm}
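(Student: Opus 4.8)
The plan is to derive Theorem~\ref{manyvariablethm} from Theorem~\ref{mainthm} by induction on $d$; the main device is to reduce a general linear relation between orbits to the diagonal case handled by Theorem~\ref{mainthm}, via conjugation of $f_1$ by affine-linear maps.

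First I would normalize $L$. Being a line, $L$ has at least one non-constant coordinate; after permuting the coordinates --- which permutes the pairs $(f_i,x_i)$ accordingly and changes neither the hypothesis nor the conclusion --- and reparametrizing, I may assume $L=\{(t,\ell_2(t),\dots,\ell_d(t)):t\in\C\}$ for some $\ell_2,\dots,\ell_d\in\C[X]$ of degree at most $1$. In these coordinates the hypothesis says precisely that there are infinitely many $t\in\OO_{f_1}(x_1)$ with $\ell_i(t)\in\OO_{f_i}(x_i)$ for every $i\ge2$; in particular $\OO_{f_1}(x_1)$ is infinite. The base case $d=1$ is trivial, since then $L=\C$ and $f_1\iter{1}(L)=f_1(\C)=\C=L$.

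For the inductive step ($d\ge2$) I would distinguish two cases. If some $\ell_i$ is constant, say $\ell_i\equiv b$, then the hypothesis forces $b\in\OO_{f_i}(x_i)$, and omitting the $i\tth$ coordinate yields a line $L'$ in $\C^{d-1}$ whose intersection with $\prod_{j\ne i}\OO_{f_j}(x_j)$ is still infinite --- it is carved out by the very same conditions on $t$ --- so the inductive hypothesis supplies nonnegative integers $m_j$ ($j\ne i$), not all $0$, with $(\dots,f_j\iter{m_j},\dots)(L')=L'$; then setting $m_i:=0$ one checks that $(f_1\iter{m_1},\dots,f_d\iter{m_d})(L)=L$. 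If instead every $\ell_i$ is non-constant, hence an invertible affine map, I conjugate: let $\tilde f_i:=\ell_i\circ f_1\circ\ell_i^{-1}$, a polynomial of degree $\deg(f_1)>1$. Since $\ell_i\circ f_1\iter{k}=\tilde f_i\iter{k}\circ\ell_i$ for all $k$, the hypothesis translates into the assertion that $\OO_{\tilde f_i}(\ell_i(x_1))\cap\OO_{f_i}(x_i)$ is infinite, whence Theorem~\ref{mainthm} yields $n_i,m_i>0$ with $\tilde f_i\iter{n_i}=f_i\iter{m_i}$, i.e.\ $f_i\iter{m_i}\circ\ell_i=\ell_i\circ f_1\iter{n_i}$.

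Finally I would splice these relations together. Iterating $f_i\iter{m_i}\circ\ell_i=\ell_i\circ f_1\iter{n_i}$ gives $f_i\iter{rm_i}\circ\ell_i=\ell_i\circ f_1\iter{rn_i}$ for every $r\ge1$, so with $N:=\operatorname{lcm}(n_2,\dots,n_d)$, $\mu_1:=N$, and $\mu_i:=(N/n_i)m_i$ for $i\ge2$, one has $f_i\iter{\mu_i}\circ\ell_i=\ell_i\circ f_1\iter{\mu_1}$ for all $i\ge2$. Hence $(f_1\iter{\mu_1},\dots,f_d\iter{\mu_d})$ carries the point $(t,\ell_2(t),\dots,\ell_d(t))$ of $L$ to $(f_1\iter{\mu_1}(t),\ell_2(f_1\iter{\mu_1}(t)),\dots,\ell_d(f_1\iter{\mu_1}(t)))$, again a point of $L$; since $f_1\iter{\mu_1}\colon\C\to\C$ is surjective, this map sends $L$ onto $L$, and $\sum\mu_i\ge\mu_1>0$. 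The only substantive input here is Theorem~\ref{mainthm}; beyond it, the part needing care is the bookkeeping in the normalization and in the constant-$\ell_i$ case, in particular verifying that the configuration obtained in $\C^{d-1}$ genuinely satisfies the hypotheses so that the induction runs. I do not anticipate any deeper obstacle, as essentially all the difficulty is already absorbed into the two-variable theorem.
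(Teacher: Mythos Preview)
Your proof is correct and follows essentially the same approach as the paper: induct on $d$, handle constant coordinates by dropping them and invoking the inductive hypothesis with the corresponding exponent set to $0$, and in the fully non-constant case conjugate $f_1$ by the affine parametrizations $\ell_i$ to reduce each pair of coordinates to an application of Theorem~\ref{mainthm}, then align the resulting common iterates via an lcm. Your write-up is in fact slightly more explicit than the paper's in noting surjectivity of $f_1\iter{\mu_1}$ to obtain equality $(f_1\iter{\mu_1},\dots,f_d\iter{\mu_d})(L)=L$ rather than mere inclusion.
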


When Theorem~\ref{manyvariablethm} applies, we can describe the intersection of
$L$ with the product of orbits.  Our description involves the following more
general notion of orbits:
\begin{defi}
If $\Omega$ is a set and $T$ is a set of maps $\Omega\to \Omega$, then for
$\omega\in \Omega$ the \emph{orbit} of $\omega$ under $T$ is
$\OO_T(\omega):=\{t(\omega):t\in T\}$.
\end{defi}

Recall that a \emph{semigroup} is a set with an associative binary relation;
in this paper, all semigroups are required to contain an identity element.
Thus, for $f\in\C[X]$ and $\omega\in\C$, the orbit $\OO_f(\omega)$ equals
$\OO_S(\omega)$ where $S$ is the cyclic semigroup $\langle f\rangle$
generated by the map $f\colon\C\to\C$; in general, if $S=\langle\Phi\rangle$,
then we write $\OO_{\Phi}(\alpha)$ in place of $\OO_S(\alpha)$.
Theorem~\ref{manyvariablethm} enables us to describe the intersection of
a line and a product of orbits:

\begin{cor} \label{manyvariablecor}
Let $\alpha\in \C^d$, let
$f_1, \dots, f_d\in \C[X]$ satisfy $\deg(f_i) > 1$ for $i=1, \dots, d$,
and let $L$ be a line in\/ $\C^d$.  Let $S$ be the semigroup generated by the
maps $\rho_i\colon\C^d\to\C^d$ with $1\le i\le d$, where
$\rho_i$ acts as the identity on each coordinate of $\C^d$ except the $i\tth$,
on which it acts as $f_i$.  Then the intersection of $\OO_S(\alpha)$ with $L$
is $\OO_T(\alpha)$, where $T$ is the union of finitely many cosets of cyclic
subsemigroups of $S$.
\end{cor}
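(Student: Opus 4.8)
The plan is to invoke Theorem~\ref{manyvariablethm} and then analyze, by induction on $d$, the set of exponent vectors that realize points of $\OO_S(\alpha)\cap L$. Write $\alpha=(x_1,\dots,x_d)$, and for $n=(n_1,\dots,n_d)\in\N^d$ let $\sigma_n\in S$ be the map $(f_1\iter{n_1},\dots,f_d\iter{n_d})$, so $\sigma_n\circ\sigma_m=\sigma_{n+m}$ and $\OO_S(\alpha)=\{\sigma_n(\alpha):n\in\N^d\}$. Set $E:=\{n\in\N^d:\sigma_n(\alpha)\in L\}$, so $\OO_S(\alpha)\cap L=\{\sigma_n(\alpha):n\in E\}$. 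It is enough to produce a vector $w\in\N^d$ and finitely many $v_1,\dots,v_r\in E$ with $E=\bigcup_{j=1}^r(v_j+\N w)$: putting $\psi:=\sigma_w$ we then have $\sigma_{v_j}\circ\psi\iter k=\sigma_{v_j+kw}$, whence $\OO_S(\alpha)\cap L=\OO_T(\alpha)$ with $T:=\bigcup_j\sigma_{v_j}\langle\psi\rangle$, a union of finitely many cosets of the cyclic subsemigroup $\langle\psi\rangle\le S$ (with the convention $\langle\sigma_0\rangle=\{\mathrm{id}\}$, so that when $w=0$ the sets $v_j+\N w$ are single points).

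If $\OO_S(\alpha)\cap L$ is finite, take $w=0$ and let $v_1,\dots,v_r$ enumerate $E$; so assume $\OO_S(\alpha)\cap L$ is infinite. By Theorem~\ref{manyvariablethm} there is a nonzero $w=(m_1,\dots,m_d)\in\N^d$ such that $\psi:=\sigma_w$ satisfies $\psi(L)=L$. Since $\psi\in S$ we have $\psi(\OO_S(\alpha))\subseteq\OO_S(\alpha)$, and combined with $\psi(L)=L$ this shows $\psi$ maps $\OO_S(\alpha)\cap L$ into itself; equivalently $E+w\subseteq E$. It remains to show $E$ is a finite union of translates $v_j+\N w$ with $v_j\in E$.

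Distinguish two cases according to whether $L$ lies in some coordinate hyperplane $z_i=c$. If it does, then every point of $\OO_S(\alpha)\cap L$ has $i$th coordinate $c$, so $c=f_i\iter e(x_i)$ for some $e\ge0$, and, writing a prime for deletion of the $i$th coordinate, $\OO_S(\alpha)\cap L$ is exactly the set of tuples obtained from $\OO_{S'}(\alpha')\cap L'$ by reinserting $c$ in the $i$th slot, where $L'$ is a line in $\C^{d-1}$ and $S'$ the corresponding semigroup. By the inductive hypothesis $\OO_{S'}(\alpha')\cap L'=\OO_{T'}(\alpha')$ with $T'$ a union of finitely many cosets of some cyclic $\langle\psi'\rangle\le S'$; lifting $\psi'$ and the coset representatives to $S$ by inserting $f_i\iter0=\mathrm{id}$ in the $i$th slot, and then composing each lifted representative with $\rho_i\iter e$, exhibits $\OO_S(\alpha)\cap L$ as $\OO_T(\alpha)$ with $T$ a union of finitely many cosets of the lifted $\langle\psi'\rangle$. (The base case $d=1$ is trivial: there $L=\C^1$, so $\OO_S(\alpha)\cap L=\OO_S(\alpha)=\OO_{\rho_1}(\alpha)$.) If, on the other hand, no coordinate is constant on $L$, then one coordinate of a point of $L$ determines that point, so infinitude of $\OO_S(\alpha)\cap L$ forces each $\OO_{f_i}(x_i)$ to be infinite; hence no $x_i$ is preperiodic and every map $s\mapsto f_i\iter s(x_i)$ is injective. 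Parametrizing $L$ by $t\mapsto(a_it+b_i)_{i=1}^d$ with all $a_i\ne0$, we get $E=\{n\in\N^d:g_1(n_1)=\dots=g_d(n_d)\}$ with $g_i(s):=(f_i\iter s(x_i)-b_i)/a_i$ injective; so $E$ is the graph of a function $\phi$ on the set $D\subseteq\N$ of first entries of members of $E$, and $D$ is infinite because $E$ is. From $E+w\subseteq E$ and the graph property one gets $m_1\ge1$ (otherwise $w=0$) and $D+m_1\subseteq D$, so $D$ is a finite union of one-sided progressions $d_\ell+m_1\N$, and $\phi(d_\ell+km_1)=\phi(d_\ell)+k(m_2,\dots,m_d)$ on each; hence $E=\bigcup_\ell\bigl((d_\ell,\phi(d_\ell))+\N w\bigr)$, as needed.

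Theorem~\ref{manyvariablethm} supplies essentially all the depth, and the rest is bookkeeping. I expect the main obstacle to be the case with no constant coordinate: the key observation is that infinitude of the intersection, together with the absence of constant coordinates, rules out preperiodicity of every $x_i$, and it is precisely this that collapses $E$ first to a function graph and then, via $E+w\subseteq E$, to a finite union of progressions all pointing in the single direction $w$. The constant-coordinate reduction is routine but must be arranged so that the cyclic subsemigroup delivered by the inductive hypothesis lifts back to $S$ correctly.
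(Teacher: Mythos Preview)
Your proof is correct and follows essentially the same route as the paper's: reduce by induction to the case where $L$ projects surjectively onto every coordinate, apply Theorem~\ref{manyvariablethm} to obtain $w=(m_1,\dots,m_d)$ with $E+w\subseteq E$, and then split according to the residue of the first coordinate modulo $m_1$. The only cosmetic difference is that you make the graph structure of $E$ and the non-preperiodicity of each $x_i$ explicit, whereas the paper argues directly that two points of $\OO_S(\alpha)\cap L$ with the same first coordinate coincide (since $L$ is determined by any one coordinate); minor notational slip: your $\N^d$ should be $\N_0^d$ in the paper's conventions.
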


It is natural to seek analogues of Corollary~\ref{manyvariablecor} for other
semigroups of endomorphisms of a variety.  In the following question we write
$\N_0$ for the set of nonnegative integers.

\begin{question}
\label{semigroup}
Let $X$ be a variety defined over\/ $\C$, let $V$ be a closed
subvariety of $X$, let $S$ be a finitely generated commutative subsemigroup of
$\End X$, and let $\alpha\in X(\C)$.  Do the following hold?
\begin{enumerate}
\item[(a)]  The intersection $V\cap\OO_S(\alpha)$ can
be written as $\OO_T(\alpha)$ where $T$ is the union of at most finitely many
cosets of subsemigroups of $S$.
\item[(b)]  For any choice of generators $\Phi_1,\dots,\Phi_r$ of $S$,
let $Z$ be the set of tuples
$(n_1,\dots,n_r)\in{\N_0}^r$
for which $\Phi_1^{n_1}\cdots\Phi_r^{n_r}(\alpha)$ lies in $V$; then $Z$ is the
intersection of\/ ${\N_0}^r$ with a finite union of cosets of subgroups of
$\Z^r$.
\end{enumerate}
\end{question}

Corollary~\ref{manyvariablecor} provides just the third known setting
in which part (a) holds.
In this case part (b) holds as well, and in fact we know no example where
(a) holds but (b) fails (it is not difficult to show that (b) implies (a)).
The first setting in which (a) (and (b)) was shown to hold is when $V$ is a
semiabelian variety and $S$ consists of translations: this is a reformulation
of the Mordell--Lang conjecture, which was proved by Faltings~\cite{Faltings}
and Vojta~\cite{V1} (we will discuss this further in Section~\ref{S:ML}).
Finally, when $S$ is cyclic, it is known that (a) and (b) hold in various
cases \cite{Bell,Jason,PR,Denis,p-adic,lines}, and we expect them to hold
whenever $S$ is cyclic.  We emphasize that the methods used to resolve
Question~\ref{semigroup} in these three settings are completely different
from one another.

In Section~\ref{S:ML} we will present several
examples in which (a) does not hold;  we do not
know any general conjecture predicting when it should hold.
We will also explain how Question~\ref{semigroup} relates to the existence
of positive-dimensional subvarieties of $V$ that are invariant under a
nonidentity endomorphism in $S$.

In case $S=\langle\Phi\rangle$ is cyclic,
Question~\ref{semigroup} fits into Zhang's far-reaching system
of dynamical conjectures \cite{ZhangLec}.  Zhang's conjectures include
dynamical analogues of the Manin-Mumford and Bogomolov conjectures for
abelian varieties (now theorems of Raynaud \cite{Raynaud1, Raynaud2},
Ullmo \cite{Ullmo}, and Zhang \cite{Zhang}), as well as a conjecture
on the existence of a Zariski dense orbit for a large class of endomorphisms
$\Phi$.  Let $Y$ denote the union of the proper subvarieties of $X$ which are
preperiodic under $\Phi$.   Then \cite[Conj.~4.1.6]{ZhangLec} asserts that
$X\ne Y$ if $X$ is an irreducible projective variety and $\Phi$ admits a
polarization; the conclusion of Question~\ref{semigroup}
implies that $\OO_{\Phi}(\alpha)\cap V$ is finite whenever
$\alpha\in X(\C)\setminus Y(\C)$ and $V$ is a proper closed subvariety of $X$.
For more details, see Section~\ref{S:ML}.

In our previous paper \cite{lines}, we proved Theorem~\ref{mainthm} in case
$\deg(f)=\deg(g)$.  The proof went as follows.  First we used a specialization
argument to show it suffices to prove the result when $f,g,x,y$ are all
defined over a number field $K$.  Then in fact they
are defined over some ring $A$ of $S$-integers of $K$, where $S$ is a finite
set of primes; this implies that $\OO_f(x)$ and $\OO_g(y)$ lie in $A$.
Thus, for each $n$, the equation $f\iter{n}(X)=g\iter{n}(Y)$ has infinitely
many solutions in $A\times A$, so by Siegel's theorem the polynomial
$f\iter{n}(X)-g\iter{n}(Y)$ has an
absolutely irreducible factor in $K[X,Y]$ which has genus zero and has at most
two points at infinity.  A result of
Bilu and Tichy describes the
polynomials $F,G\in K[X]$ for which $F(X)-G(Y)$ has such a factor.  This gives
constraints on the shape of $f\iter{n}$ and $g\iter{n}$; by combining the
information deduced for different values of $n$, and using elementary results
about polynomial decomposition, we deduced that either $f$ and $g$ have a
common iterate, or there is a linear $\ell\in K[X]$ such that
$(\ell\circ f\circ\ell\iter{-1},\ell\circ g\circ\ell\iter{-1})=
 (\alpha X^r,\beta X^r)$.  Finally, we proved the result directly for
this last type of polynomials $f,g$.

We use two approaches to prove versions of Theorem~\ref{mainthm} in case
$\deg(f)\ne\deg(g)$, both of which rely on the fact that the result is
known when $\deg(f)=\deg(g)$.  Our first approach utilizes canonical
heights to reduce the problem to the case $\deg(f)=\deg(g)$ treated in
\cite{lines}; this approach does not work when $f,g,x,y$ are defined over
a number field, but works in essentially every other situation (cf.\
Theorem~\ref{first extension}).  Our second
approach uses delicate results about polynomial decomposition in order
to obtain the full Theorem~\ref{mainthm}.  In this proof we do not
use the full strength of the result from \cite{lines}; instead we just use
the main polynomial decomposition result from that paper.
In particular, our proof of Theorem~\ref{mainthm}
does not depend on the complicated specialization argument used in
\cite{lines}.  We now describe the second approach in more detail.

Our proof of Theorem~\ref{mainthm} uses a similar strategy to that in
\cite{lines}, but here the
polynomial decomposition work is much more difficult.  The main reason for
this is that, when analyzing functional equations involving $f\iter{n}$ and
$g\iter{n}$ in case
$\deg(f)=\deg(g)$, we could use the fact that if $A,B,C,D\in \C[X]\setminus\C$
satisfy $A\circ B=C\circ D$ and $\deg(A)=\deg(C)$, then $C=A\circ\ell$ and
$D=\ell\iter{-1}\circ B$
for some linear $\ell\in \C[X]$.  When $f$ and $g$ have distinct degrees, one
must use a different approach.  Our proof relies on the full strength of the
new description given in \cite{MZ} for the collection of all
decompositions of a polynomial; in addition, we use several new types of
polynomial decomposition arguments in the present paper.
As above, for every $m,n$ we find that
$f\iter{n}(X)-g\iter{m}(Y)$ has a genus-zero factor with at most two points
at infinity.
We show that this implies that either $f$ and $g$ have a common iterate, or
there is a linear $\ell\in \C[X]$ such that
$(\ell\circ f\circ\ell\iter{-1},\ell\circ g\circ\ell\iter{-1})$ is
either $(\alpha X^r,\beta X^s)$ or $(\pm T_r,\pm T_s)$, where $T_r$ is
the degree-$r$ Chebychev polynomial of the first kind.
We then use a consequence of Siegel's theorem to handle these
last possibilities.

The contents of this paper are as follows.  In the next section we
state the results of Siegel and Bilu--Tichy, and deduce some consequences.
In Section~\ref{sec decomposition} we present the results about polynomial
decomposition used in this paper.
In the following two sections we prove that if $f,g\in\C[X]$ with
$\deg(f),\deg(g)>1$ are such that, for every $n,m>0$,
$f\iter{n}(X)-g\iter{m}(Y)$ has a genus-zero factor with at most two points
at infinity, then
either $f$ and $g$ have a common iterate or some linear $\ell\in\C[X]$ makes
$(\ell\circ f\circ\ell\iter{-1},\ell\circ g\circ\ell\iter{-1})$ have the form
$(\alpha X^r,\beta X^s)$ or $(\pm T_r,\pm T_s)$.  Then in
Section~\ref{sec conclude proof} we conclude the proof of
Theorem~\ref{mainthm},
and in Section~\ref{sec multivariate} we prove Theorem~\ref{manyvariablethm}
and Corollary~\ref{manyvariablecor}.
In the next several sections we give an alternate proof of
Theorem~\ref{mainthm} in case $x,y,f,g$ cannot be defined over a number field;
this proof uses canonical heights to reduce the problem to the case
$\deg(f)=\deg(g)$ treated in our previous paper, and does not rely on any
difficult polynomial decomposition arguments.  In the final section we
discuss related problems.

\begin{notation}
Throughout this paper, $f\iter{n}$ denotes the $n\tth$ iterate of the
polynomial $f$, with the convention $f\iter{0}=X$.  When $f$ has degree $1$, we
denote the functional inverse of
$f$ by $f\iterp{-1}$; this is again a linear polynomial.
By $T_n$ we mean the (normalized) degree-$n$ Chebychev polynomial of the first
kind, which is defined by the equation $T_n(X+X^{-1})=X^n+X^{-n}$; the
classical Chebychev polynomial $C_n$ defined by $C_n(\cos\theta)=\cos n\theta$
satisfies $2C_n(X/2) = T_n(X)$.  We write $\N$ for
the set of positive integers and $\N_0$ for the set of nonnegative integers.
We write $\overline{K}$ for an algebraic
closure of the field $K$.  We say that $\Phi(X,Y)\in K[X,Y]$ is absolutely
irreducible if it is irreducible in $\overline{K}[X,Y]$.  In this case we let
$C$ be the completion of the normalization of the curve $\Phi(X,Y)=0$, and
define the genus of $\Phi(X,Y)$ to be the (geometric) genus of $C$.  Likewise
we define the points at infinity on $\Phi(X,Y)$ to be the points in
$C(\overline{K})$ which correspond to places of $\Kbar(C)$ extending the
infinite place of $\Kbar(X)$.  In this paper, all subvarieties are closed.
\end{notation}


\section{Integral points on curves}
\label{sec bilu}

The seminal result on curves with infinitely many integral points is the 1929
theorem of Siegel \cite{Siegel}; we use the following generalization due
to Lang \cite[Thm.\ 8.2.4 and 8.5.1]{Lang_diophantine}:

\begin{thm}
\label{siegel thm}
Let $K$ be a finitely generated field of characteristic zero, and let $R$
be a finitely generated subring of $K$.  Let $C$ be a smooth, projective,
geometrically irreducible curve over $K$, and let $\phi$ be a non-constant
function in $K(C)$.  Suppose there are infinitely many points $P\in C(K)$
which are not poles of $\phi$ and which satisfy $\phi(P)\in R$.  Then
$C$ has genus zero and $\phi$ has at most two distinct poles.
\end{thm}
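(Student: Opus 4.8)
The plan is to prove the contrapositive: if $C$ has positive genus, or has genus zero but $\phi$ has at least three distinct poles, then only finitely many $P\in C(K)$ satisfy $\phi(P)\in R$. (The forward direction, that $\mathbb{A}^1$ and $\mathbb{G}_m$ carry infinitely many integral points, is trivial, so this suffices.) A preliminary step reduces to the classical case in which $K$ is a number field and $R$ a ring of $S$-integers for a finite set of places $S$: spread $C$ and $\phi$ out over a finitely generated $\Z$-subalgebra $A\subseteq K$ with $\operatorname{Frac}(A)=K$ and $R\subseteq A$, shrink $\operatorname{Spec} A$ so that the model is smooth and proper and $\phi$ has good reduction, and then invoke a specialization argument. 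The relevant integral points inject into a finitely generated group --- via the Mordell--Weil and Lang--N\'eron theorems applied to the Jacobian in positive genus, and via the finite generation of $S$-unit groups in genus zero --- and by N\'eron's specialization theorem this group injects into its specializations for a density-one set of reductions to number-field points; hence an infinite family of integral points over $K$ would yield an infinite family over a number field. From now on $K$ is a number field.

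For the positive-genus case I would run the classical Siegel--Mahler descent. After a finite extension, fix a $K$-rational point of $C$ and embed $\iota\colon C\hookrightarrow J$ into its Jacobian; by the Mordell--Weil theorem $J(K)$ is finitely generated. Fix a large integer $m$; there is then a finite extension $K'/K$ with $\tfrac1m J(K)\subseteq J(K')$. Consider the \'etale cover $C_m:=C\times_{J,[m]}J\to C$ of degree $m^{2g}$: each $P\in C(K)$ with $\phi(P)\in R$ lifts to a point $Q\in C_m(K')$, which by the Chevalley--Weil theorem is $S'$-integral for a finite set of places $S'$ of $K'$ not depending on $P$. The decisive input is the quadratic behaviour of the canonical height $\widehat h$ on $J$: if $Q$ corresponds to $R\in J$ with $[m]R=\iota(P)$, then $\widehat h(R)=m^{-2}\widehat h(\iota(P))$, so the lifts $Q$ have height only about $m^{-2}$ times that of $P$, while the covering $C_m\to C$, being unramified, preserves $v$-adic distances. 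Consequently, were there infinitely many eligible $P$ with $h(P)\to\infty$, their lifts would have to approximate a fixed algebraic number to within a power of the height whose exponent exceeds $2$ once $m$ is large --- which Roth's theorem forbids. Hence the eligible $P$ have bounded height and are finite in number.

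For genus zero with at least three poles, note first that an integral point is in particular $K$-rational, so $C\cong\mathbb{P}^1_K$. The $S$-integrality of $\phi(P)$ then forces $P$ to be an $S'$-integral point, for a suitably enlarged finite set of places $S'$, of the complement in $\mathbb{P}^1$ of three points, which after a M\"obius transformation over a finite extension we may take to be $0,1,\infty$. Such a point has coordinate $u$ with $u$ and $1-u$ both $S'$-units, hence yields a solution of the $S'$-unit equation $u+(1-u)=1$; by the theorem of Mahler--Siegel--Lang (itself a consequence of Roth's theorem) this equation has only finitely many solutions, so only finitely many $P$ occur.

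I expect the main obstacle to be the positive-genus step, and within it the uniformity bookkeeping in the descent: one must verify that the lifted points are defined over a single number field $K'$ and are $S'$-integral for a single finite set $S'$, both independent of $P$, and one must push the canonical-height comparison through with explicit enough constants that Roth's theorem applies for a definite choice of $m$. The reduction from finitely generated fields to number fields, though standard, likewise needs care so that the genus of $C$, the number of poles of $\phi$, and the $S$-integrality condition are all preserved under the chosen specialization.
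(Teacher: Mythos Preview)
The paper does not prove this theorem at all: it is stated as a known result and attributed to Siegel with the extension to finitely generated fields cited from Lang's \emph{Fundamentals of Diophantine Geometry} (Thms.~8.2.4 and 8.5.1). So there is no ``paper's own proof'' to compare against.

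Your outline is essentially the classical argument, and it is broadly sound. Two remarks. First, the reduction from finitely generated fields to number fields is the most delicate step, and your sketch is a bit optimistic: one must check that an infinite supply of $R$-integral points really survives specialization, which requires not just injectivity of the Mordell--Weil specialization (N\'eron) but also that the integrality condition and the pole divisor of $\phi$ behave well on the chosen fibre. Lang's treatment in fact avoids specialization altogether by building a height theory directly over finitely generated fields; your route can be made to work, but it is not the path taken in the reference the paper cites. Second, your parenthetical about a ``forward direction'' is superfluous---the theorem is a one-way implication, not an equivalence.
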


We will use this result in two ways.  One is in the form of the following
consequence due to Lang \cite{Lang_int}.

\begin{cor}
\label{lang thm}
Let $a,b\in\C^*$, and let $\Gamma$ be a finitely generated subgroup of
$\C^*\times\C^*$.  Then the equation $ax+by=1$ has at most finitely
many solutions $(x,y)\in\Gamma$.
\end{cor}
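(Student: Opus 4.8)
The plan is to deduce Corollary~\ref{lang thm} from Theorem~\ref{siegel thm} (Siegel's theorem, in the finitely generated form stated above) by realizing the solution set as a set of integral points on a curve. I would first reduce to the case where $\Gamma$ has rank $\le 2$: any finitely generated $\Gamma\subseteq\C^*\times\C^*$ embeds into a product of its coordinate projections, and after passing to a basis of $\Gamma/\Gamma_{\mathrm{tors}}$, there is no loss in assuming $\Gamma$ is generated by finitely many elements $\gamma_1,\dots,\gamma_s$ together with torsion. The arithmetic input is that the coefficients $a,b$ and the (finitely many) generators of $\Gamma$ all lie in some finitely generated subfield $K\subseteq\C$, and that $\Gamma$ lies in $R^*\times R^*$ for a suitable finitely generated subring $R\subseteq K$ (adjoin to $R$ the generators of $\Gamma$ and their inverses).

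Next I would set up the curve. Write $x=\zeta\prod_i \gamma_{i,1}^{n_i}$ and $y=\eta\prod_i \gamma_{i,2}^{n_i}$ for $(x,y)\in\Gamma$, where $\gamma_i=(\gamma_{i,1},\gamma_{i,2})$ and $\zeta,\eta$ range over the (finite) torsion part; since the torsion contributes only finitely many cosets, it suffices to bound solutions within one coset, i.e.\ to bound the $n=(n_1,\dots,n_s)\in\Z^s$ with $a\zeta\prod\gamma_{i,1}^{n_i}+b\eta\prod\gamma_{i,2}^{n_i}=1$. The standard trick is an induction on $s$ that pins the problem onto a one-dimensional torus: fixing all but one exponent, or more cleanly, observing that the map $n\mapsto(x,y)$ factors through $\G_m^s\to\G_m^2$ and the preimage of the line $ax+by=1$ is a subvariety of $\G_m^s$; after slicing we are led to the affine curve $C_0\colon au+bv=1$ inside $\G_m^2$, whose smooth projective model $C$ is just $\mathbb P^1$, with the three rational points $u=0$, $v=0$, and the point at infinity of the line removed. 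A point $(x,y)\in\Gamma$ on $ax+by=1$ gives a point $P\in C(K)$ with $u$-coordinate in $R$ and, because $v=x^{-1}\in R^*$ as well, with $u=x\in R$ avoiding the poles; so if there were infinitely many such points we could apply Theorem~\ref{siegel thm} to $\phi=u$ on $C$.

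The contradiction then comes from counting poles: on $C\cong\mathbb P^1$ with coordinate $u=x$ and $v=(1-au)/b$, the condition $x,y\in\C^*$ excludes $u=0$ and $u=1/a$, and $\phi=u$ has its single pole at $u=\infty$; but we also need $y=v$ to be a nonzero element of $R$, and $1/y=1/v$ must be integral, which forces us to also track the pole of $v$, i.e.\ the function we should feed Siegel is chosen so that its relevant coordinate stays integral while still being nonconstant. Running this carefully, infinitely many solutions would put infinitely many $R$-integral points on a genus-zero curve with only one or two points at infinity relative to $\phi$ — which Siegel's theorem permits. So the genuine obstacle is not Siegel's theorem itself but the descent from the finitely generated group $\Gamma$ to a single curve: one must handle the torsion (finitely many cosets, harmless), reduce the rank (the induction on the number of multiplicative generators, the only place requiring care), and verify that in each reduced instance the relevant coordinate function is nonconstant on the resulting curve — degenerate cases where $\gamma_{i,1}$ or $\gamma_{i,2}$ is trivial must be split off separately, and these are exactly the situations where one genuinely invokes that the ambient equation $ax+by=1$ is not identically satisfied on a subtorus. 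Once that bookkeeping is done, Theorem~\ref{siegel thm} applied to the $u$-coordinate on each $\mathbb P^1$ closes the argument.
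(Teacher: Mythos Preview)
Your setup is sound up to the point where you identify the line $au+bv=1$ inside $\G_m^2$ as $\mathbb P^1$ minus three points ($u=0$, $u=1/a$, $u=\infty$), and you correctly observe that feeding Siegel the function $\phi=u$ yields no contradiction, since $u$ has a single pole. The problem is that you never recover from this. The subsequent talk of ``induction on the number of multiplicative generators'' and ``bookkeeping'' does not address the issue: the rank of $\Gamma$ has no bearing on the genus of the curve or on the number of poles of $\phi$, and your final sentence --- that Siegel applied to the $u$-coordinate ``closes the argument'' --- directly contradicts your own earlier (correct) observation that it does not.

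What is missing is a way to force Siegel to bite. The paper does this with the standard cubing trick: since $\Gamma/\Gamma^3$ is finite, write each solution as $(x,y)=(\alpha U^3,\beta V^3)$ with $(\alpha,\beta)$ ranging over a finite set of coset representatives and $(U,V)\in\Gamma$; then $(U,V)$ lies on the smooth plane cubic $a\alpha X^3+b\beta Y^3=1$, which has genus~$1$, and Siegel (Theorem~\ref{siegel thm}) gives finiteness immediately. An alternative that stays closer to your setup would be to choose $\phi$ with poles at \emph{all three} missing points --- for instance $\phi=u+u^{-1}+v^{-1}$, which is $R$-integral on any $(x,y)\in\Gamma\subseteq R^*\times R^*$ and has exactly three simple poles on $\mathbb P^1$ --- so that Siegel applies directly to the genus-zero curve. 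Either device is the missing step; without one of them, the argument does not conclude.
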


This result is proved by applying Theorem~\ref{siegel thm} to the
genus-$1$ curves $a\alpha X^3 + b\beta Y^3=1$, where $(\alpha,\beta)$ runs
through a finite subset of $\Gamma$ which surjects onto $\Gamma/\Gamma^3$.

To describe the other way we apply Theorem~\ref{siegel thm}, we introduce the
following terminology:

\begin{defi}\label{defexc}
A \emph{Siegel polynomial} over a field $K$ is an absolutely irreducible
polynomial $\Phi(X,Y)\in K[X,Y]$ for which the curve $\Phi(X,Y)=0$ has genus
zero and has at most two points at infinity.  A \emph{Siegel factor} of a
polynomial $\Psi(X,Y)\in K[X,Y]$ is a factor of $\Psi$ which is a
Siegel polynomial over $K$.
\end{defi}

\begin{remark}
What we call Siegel polynomials were called exceptional polynomials in
\cite{BT}; the term `exceptional polynomial' has been used with a different
meaning in several papers (e.g., \cite{GRZ}).
\end{remark}

\begin{remark}
Clearly a Siegel polynomial over $K$ maintains the Siegel property over $\Kbar$.
Further, an irreducible $\Phi\in \Kbar[X,Y]$ is a Siegel polynomial if and only
if $\Phi(\phi,\psi)=0$ for some Laurent polynomials $\phi,\psi\in\Kbar(Z)$
which are not both constant (recall that the \emph{Laurent polynomials} in
$\Kbar(Z)$ are the elements of the form $F/Z^n$ with $F\in\Kbar[Z]$ and
$n\in\N_0$).  We do not know a reference for this fact, so we sketch the proof.
If $\Phi$ is a Siegel polynomial then the function field
of the curve $\Phi(X,Y)=0$ (over $\Kbar$) has the form $\Kbar(Z)$, so
$X=\phi(Z)$ and $Y=\psi(Z)$ for some $\phi,\psi\in\Kbar(Z)$; then
$\Phi(\phi,\psi)=0$ and $\phi,\psi$ are not both constant.
Since $\Phi(X,Y)=0$ has at most two points at infinity, at most two points of
$\Kbar\cup\{\infty\}$ are poles of either $\phi$ or $\psi$.  By making a
suitable linear fractional change to $Z$, we may assume that $\phi$ and $\psi$
have no poles except possibly $0$ and $\infty$, which implies $\phi$ and $\psi$
are Laurent polynomials.  Conversely, suppose $\Phi(\phi,\psi)=0$ for some
Laurent polynomials $\phi,\psi\in\Kbar(Z)$ which are not both constant.
Then the function field of $\Phi(X,Y)=0$ is a subfield $F$ of $\Kbar(Z)$, and
each infinite place of $F$ lies under either $Z=0$ or $Z=\infty$, so indeed
$F$ has genus zero with at most two points at infinity.
\end{remark}

\begin{cor}
\label{siegel}
Let $R$ be a finitely generated integral domain of characteristic zero,
let $K$ be the field of fractions of $R$, and pick $\Phi(X,Y)\in K[X,Y]$.
Suppose there are infinitely many pairs $(x,y)\in R\times R$ for which
$\Phi(x,y)=0$.  Then $\Phi(X,Y)$ has a Siegel factor over $K$.
\end{cor}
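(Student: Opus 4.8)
The plan is to reduce to Theorem~\ref{siegel thm} by passing to an irreducible factor of $\Phi$ and then replacing $R$ by a finitely generated subring over which everything in sight is defined. First I would dispose of the trivial cases: if $\Phi=0$ or if $\Phi$ is a nonzero constant, the hypothesis forces $\Phi=0$ (otherwise there are no solutions at all), and in that degenerate situation the statement is vacuous or can be handled separately; so assume $\Phi$ is a nonconstant polynomial in $K[X,Y]$. Factor $\Phi=\prod_j \Phi_j$ into irreducibles in $K[X,Y]$. Since the solution set in $R\times R$ is infinite, by pigeonhole some irreducible factor $\Psi:=\Phi_j$ has infinitely many zeros $(x,y)\in R\times R$. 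If $\Psi$ is not absolutely irreducible, then over $\Kbar$ it factors as a product of at least two distinct Galois-conjugate irreducibles, and any common zero of two such coprime bivariate polynomials lies in a finite set (the intersection of two distinct irreducible plane curves); so in that case $\Psi$ has only finitely many zeros in $\Kbar^2$, contradicting our choice. Hence $\Psi$ is absolutely irreducible, and it remains to show $\Psi$ is a Siegel polynomial, i.e.\ that the curve $\Psi(X,Y)=0$ has genus zero with at most two points at infinity; this will then exhibit a Siegel factor of $\Phi$ over $K$.

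Next I would set up the geometry. Let $C$ be the smooth projective model of the curve $\Psi(X,Y)=0$ over $K$; it is geometrically irreducible because $\Psi$ is absolutely irreducible. The coordinate function $x\in K(C)$ is nonconstant (if $x$ were constant on $C$ then $\Psi$ would be, up to scalar, $X-c$, whose affine zero set is a line, hence has infinitely many zeros only if $c\in R$ — but then $\Psi=X-c$ already has genus zero with one point at infinity, so it is a Siegel factor and we are done; so we may assume $x$ is nonconstant). Now the hypothesis gives infinitely many points $P\in C(K)$ — namely those coming from the pairs $(x,y)\in R\times R$ — with $x(P)\in R$. A minor point to address: a priori the pairs $(x,y)$ could correspond to points of $C$ that are singular on the affine model, but on the smooth model $C$ each such pair lifts to at least one $K$-point, and distinct pairs give distinct points, so we still have infinitely many $K$-points of $C$ at which the regular function $x$ takes values in $R$. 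Finally, enlarge $R$ if necessary: $C$, $\Psi$, and $x$ are all defined over the fraction field $K$ of $R$, so they are in fact defined over a finitely generated subring $R'\subseteq K$ with $R\subseteq R'$ (adjoin to $R$ the finitely many denominators appearing in the coefficients and in the data defining $C$); then the infinitely many points still satisfy $x(P)\in R\subseteq R'$, and $R'$ is a finitely generated ring of characteristic zero.

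Now apply Theorem~\ref{siegel thm} with the curve $C/K$, the function $\phi=x$, and the ring $R'$: it yields that $C$ has genus zero and that $x$ has at most two distinct poles on $C$. The poles of the coordinate function $x$ on the smooth model are exactly the points at infinity of the plane curve $\Psi(X,Y)=0$ that do not lie over $x=\infty$ — more carefully, I should note that the points at infinity of $\Psi(X,Y)=0$, as defined in the Notation section, are the places of $\Kbar(C)$ extending the infinite place of $\Kbar(X)$, which is to say precisely the poles of $x$; so "at most two poles of $x$" is literally "at most two points at infinity on $\Psi$". Combined with genus zero, this is exactly the assertion that $\Psi$ is a Siegel polynomial over $K$, hence a Siegel factor of $\Phi$ over $K$. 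The only real subtlety — the step I'd expect to take the most care with — is the absolute-irreducibility argument: ruling out that the chosen factor $\Psi$ is geometrically reducible requires knowing that two distinct irreducible plane curves meet in finitely many points, and then matching the paper's definition of "points at infinity" with "poles of the coordinate function" so that Theorem~\ref{siegel thm} applies verbatim.
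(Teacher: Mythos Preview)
Your overall strategy matches the paper's: isolate an absolutely irreducible factor of $\Phi$ defined over $K$ having infinitely many $R$-zeros, then invoke Theorem~\ref{siegel thm}. You are actually more explicit than the paper about the final step (matching ``points at infinity'' with ``poles of $x$'' on the smooth model), which the paper leaves tacit.

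There is, however, a genuine gap in your absolute-irreducibility argument. You assert that if the $K$-irreducible $\Psi$ factors over $\Kbar$ into at least two distinct Galois-conjugate irreducibles $\Psi_i$, then ``$\Psi$ has only finitely many zeros in $\Kbar^2$''. That conclusion is simply false: any nonconstant bivariate polynomial has infinitely many zeros over an algebraically closed field, and a generic zero of $\Psi$ lies on just one of the components $\Psi_i=0$. The correct statement, and the missing idea, is that every \emph{$K$-rational} zero of $\Psi$ is a \emph{common} zero of all the $\Psi_i$: if $(x,y)\in K^2$ satisfies $\Psi_i(x,y)=0$, then for each $\sigma\in\Gal(\Kbar/K)$ we have $\Psi_i^{\sigma}(x,y)=\sigma(\Psi_i(x,y))=0$ because $(x,y)$ is Galois-fixed; hence $(x,y)$ lies on every conjugate component, and now Bezout bounds the $K$-rational (in particular the $R\times R$) zeros. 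This Galois step is precisely the core of the paper's proof, which runs the same idea from the other side: start with a $\Kbar$-irreducible factor having infinitely many $R$-zeros, observe that all its Galois conjugates share those zeros, and conclude by Bezout that the factor is Galois-stable and hence already in $K[X,Y]$. Once you insert this one sentence your argument is correct and essentially the same as the paper's.
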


\begin{proof}
The hypotheses imply that $\Phi(X,Y)$ has an irreducible factor
$\Psi(X,Y)$ in $\Kbar[X,Y]$ which has infinitely many roots in $R\times R$.
By replacing $\Psi$ by a scalar multiple,
we may assume that some coefficient of $\Psi$ equals $1$.  Since any
$\sigma\in\Gal(\Kbar/K)$ fixes $\Phi$, the polynomial $\Psi^{\sigma}$ is an
absolutely irreducible factor of $\Phi$.  Moreover, every root of $\Psi$
in $R\times R$ is also a root of $\Psi^{\sigma}$; since there are infinitely
many such roots, it follows (e.g., by Bezout's theorem) that $\Psi^{\sigma}$
is a scalar multiple of $\Psi$.  But since $\Psi$ has a coefficient
equal to $1$, the corresponding coefficient of $\Psi^{\sigma}$ is also $1$,
so $\Psi^{\sigma}=\Psi$.  Thus $\Psi$ is fixed by $\Gal(\Kbar/K)$, so
$\Psi\in K[X,Y]$, whence $\Psi$ is the desired Siegel factor.
\end{proof}

In light of Siegel's theorem, there has been intensive study of polynomials
$\Phi(X,Y)$ having a Siegel factor.  As noted above, a nonzero polynomial
$\Phi\in\Kbar[X,Y]$ has a Siegel factor
if and only if $\Phi(\phi,\psi)=0$ for some Laurent polynomials
$\phi,\psi\in\Kbar(X)$ which are not both constant.
Especially strong results have been obtained in case $\Phi(X,Y)=F(X)-G(Y)$ with
$F,G\in K[X]$; in this case the problem amounts to solving the
functional equation $F\circ\phi=G\circ\psi$ in polynomials $F,G\in K[X]$ and
Laurent polynomials $\phi,\psi\in\Kbar[X]$.  Using Ritt's classical results
on such functional equations, together with subsequent results of
Fried and Schinzel (as well as several new ideas),
Bilu and Tichy \cite[Thm.\ 9.3]{BT} proved the following definitive result in
this case.

\begin{thm}
\label{BTthm}
Let $K$ be a field of characteristic zero, and pick $F,G\in K[X]$ for which
$F(X)-G(Y)$ has a Siegel factor in $K[X,Y]$.
Then $F=E\circ F_1\circ \mu$ and
$G=E\circ G_1\circ \nu$, where $E,\mu,\nu\in K[X]$ with
$\deg(\mu)=\deg(\nu)=1$, and either $(F_1,G_1)$ or $(G_1,F_1)$ is one of the
following pairs
(in which $m,n\in\N$,\, $a,b\in K^*$, and $p\in K[X]\setminus\{0\}$):
\begin{enumerate}
\item[(\thethm.1)] $(X^m,\, aX^r p(X)^m)$ with $r$ a nonnegative integer coprime
                    to $m$;
\item[(\thethm.2)] $(X^2,\, (aX^2+b)p(X)^2)$;
\item[(\thethm.3)] $(D_m(X,a^n),\, D_n(X,a^m)$ with $\gcd(m,n)=1$;
\item[(\thethm.4)] $(a^{-m/2}D_m(X,a),\, -b^{-n/2}D_n(X,b))$ with $\gcd(m,n)=2$;
\item[(\thethm.5)] $((aX^2-1)^3,\, 3X^4-4X^3)$;
\item[(\thethm.6)] $(D_m(X,a^{n/d}),\,-D_n(X\cos(\pi/d),a^{m/d}))$ where
                    $d=\gcd(m,n)\ge 3$ and $\cos(2\pi/d)\in K$.
\end{enumerate}
\end{thm}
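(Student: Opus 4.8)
The plan is to translate Theorem~\ref{BTthm} into a question about functional composition of polynomials and then settle the resulting classification with Ritt's classical theory of polynomial decompositions; this is, in outline, the argument of Bilu and Tichy. First I would dispose of the curve-theoretic hypothesis. I work over $\Kbar$ throughout, deferring to the end the descent of the final data $E,\mu,\nu,F_1,G_1$ to $K$ (a Galois-averaging argument using the near-uniqueness of the decomposition, in the spirit of the proof of Corollary~\ref{siegel}). By the Remark following Definition~\ref{defexc}, the hypothesis that $F(X)-G(Y)$ has a Siegel factor is then equivalent to
\[
F(\phi(Z))=G(\psi(Z))
\]
for some Laurent polynomials $\phi,\psi\in\Kbar(Z)$ that are not both constant: the component of $F(X)-G(Y)=0$ cut out by the Siegel factor has genus zero and at most two points at infinity, hence is rationally parametrized by $X=\phi(Z),\ Y=\psi(Z)$, and its $\le2$ points at infinity may be moved over $\{0,\infty\}$ by a linear fractional change of $Z$. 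Since $F(x)\to\infty$ forces $G(y)\to\infty$ (the degree-one cases being immediate), both $\phi$ and $\psi$ have poles at every point at infinity, so they are ordinary polynomials when there is one such point and genuine Laurent polynomials when there are two; this dichotomy reappears below.

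Next I would peel off the common left composition factor. By L\"uroth, $\Kbar(\phi)\cap\Kbar(\psi)=\Kbar(\tau)$ for some $\tau$, and the common value $H:=F\circ\phi=G\circ\psi$ lies in $\Kbar(\tau)$; controlling the poles as above, one may arrange, after linear changes of variables, that $H=E\circ\tau$ with $E$ a polynomial and with induced decompositions $F=E\circ F_1\circ\mu$, $G=E\circ G_1\circ\nu$ in which $\mu,\nu$ are linear and $F_1,G_1$ share no left composition factor of degree $>1$. These are the $E,\mu,\nu$ of the statement, and it remains to show that $(F_1,G_1)$ or $(G_1,F_1)$ is one of the six listed pairs, knowing that $F_1\circ\phi_0=G_1\circ\psi_0$ under this coprimality-type constraint.

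The engine is then Ritt's first and second theorems on polynomial decomposition — refined by Fried and Schinzel, and extended to permit two points at infinity — which describe all such equations, up to linear changes, in terms of elementary moves of two kinds: a monomial $X^k$ against a twisted monomial $aX^j p(X)^k$, and a pair of Dickson (equivalently Chebychev) polynomials, plus a short list of sporadic low-degree identities. Running this and splitting on $\gcd(\deg F_1,\deg G_1)$: when the gcd is $1$ the polynomial $F_1(X)-G_1(Y)$ is irreducible and Ritt's second theorem applies directly, giving the coprime monomial pair $(X^m,\,aX^r p(X)^m)$ or the coprime Dickson pair $(D_m(X,a^n),\,D_n(X,a^m))$; when the gcd exceeds $1$ the curve is typically reducible, one passes to a proper genus-zero component with $\le2$ points at infinity, and its Laurent parametrization together with a Riemann--Hurwitz count of the ramification of $Z\mapsto\phi_0(Z)$ and $Z\mapsto\psi_0(Z)$ over $\{0,\infty\}$ forces the even-degree monomial pair $(X^2,\,(aX^2+b)p(X)^2)$, the higher-gcd Dickson pairs $(a^{-m/2}D_m(X,a),\,-b^{-n/2}D_n(X,b))$ (when $\gcd=2$) and $(D_m(X,a^{n/d}),\,-D_n(X\cos(\pi/d),a^{m/d}))$ (when $d=\gcd\ge3$), and the sporadic pair $((aX^2-1)^3,\,3X^4-4X^3)$; the minus signs and the factor $\cos(\pi/d)$ come from the branches of the conic underlying the relation $T_m(X)=\pm T_n(Y)$. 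The final Galois-averaging step descends $E,\mu,\nu,F_1,G_1$ to $K$ and verifies the arithmetic side conditions, such as $\cos(2\pi/d)\in K$.

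I expect the main obstacle to be the case $\gcd(\deg F_1,\deg G_1)>1$. Ritt's classical theorems are stated for ordinary polynomials and for irreducible $F_1(X)-G_1(Y)$, so the reducible, Laurent-parametrized regime must be treated essentially by hand, with careful control of the ramification over $\{0,\infty\}$, of the branches of the auxiliary conic, and of the $\gcd$-, sign-, and $\cos(\pi/d)$-constraints that emerge. Isolating the sporadic pair $((aX^2-1)^3,\,3X^4-4X^3)$, which arises from one of Ritt's exceptional composition identities rather than either generic family and so escapes the generic bookkeeping, is the other delicate point; everything else reduces to routine work with L\"uroth, Riemann--Hurwitz, and Galois descent.
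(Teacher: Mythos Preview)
The paper does not prove Theorem~\ref{BTthm} at all: it is quoted as \cite[Thm.~9.3]{BT}, a result of Bilu and Tichy, and is used in the paper purely as a black box (the paper only derives the simplified complex version, Corollary~\ref{BTcor}, by elementary manipulation of the Dickson--Chebychev identity~(\ref{TD})). So there is no ``paper's own proof'' to compare your proposal against.

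Your sketch is a fair outline of the Bilu--Tichy argument itself, and you correctly identify the skeleton: reduce the Siegel-factor hypothesis to a Laurent-polynomial functional equation, strip a maximal common left factor via L\"uroth, and then classify the coprime residual pair using Ritt's second theorem together with the Fried--Schinzel refinements, splitting on $\gcd(\deg F_1,\deg G_1)$. You also correctly flag the genuinely hard part, namely the reducible $\gcd>1$ regime and the sporadic pair. Two points where your plan is thinner than the actual proof: first, the descent of $E,\mu,\nu,F_1,G_1$ from $\Kbar$ to $K$ is not a simple Galois-averaging step in the style of Corollary~\ref{siegel}, because the decomposition $F=E\circ F_1\circ\mu$ is far from unique and the Galois action can permute the standard pairs nontrivially --- Bilu and Tichy handle this with some care; second, the classification in the $\gcd>1$ case does not reduce cleanly to a single Riemann--Hurwitz count but requires a substantial case analysis of the factorization of $D_m(X,a)-D_n(Y,b)$ and of $X^m-c\,Y^n$ (this is where most of the length of \cite{BT} lies). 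If you were to carry this out in full you would essentially be reproducing \cite{BT}, which is well beyond what the present paper needs.
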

Here $D_n(X,Y)$ is the unique polynomial in $\Z[X,Y]$ such that
$D_n(U+V,UV)=U^n+V^n$.
Note that, for $\alpha\in K$, the polynomial
$D_n(X,\alpha)\in K[X]$ is monic of degree $n$.  The
defining functional equation implies that $D_n(X,0)=X^n$ and
$\alpha^n D_n(X,1) = D_n(\alpha X,\alpha^2)$ for $\alpha\in\C^*$.
Since $T_n(u+u^{-1})=u^n+u^{-n}$, we have
\begin{equation} \label{TD}
D_n(\alpha X,\alpha^2) = \alpha^n T_n(X) \quad\text{for any $n\in\N$ and
$\alpha\in\C^*$}.
\end{equation}

For our application to orbits of complex polynomials, we will only need
the case $K=\C$ of Theorem~\ref{BTthm}.  We now state a simpler version
of the result in this case.

\begin{cor}\label{BTcor}
For nonconstant $F,G\in\C[X]$, if $F(X)-G(Y)$ has a Siegel factor
in\/ $\C[X,Y]$ then $F=E\circ F_1\circ \mu$ and
$G=E\circ G_1\circ \nu$, where $E,\mu,\nu\in \C[X]$ with
$\deg(\mu)=\deg(\nu)=1$, and either $(F_1,G_1)$ or $(G_1,F_1)$ is one of the
following pairs
(in which $m,n\in\N$ and $p\in \C[X]\setminus\{0\}$):
\begin{enumerate}
\item[(\thethm.1)] $(X^m,\, X^r p(X)^m)$, where $r\in\N_0$ is coprime to
                    $m$;
\item[(\thethm.2)] $(X^2,\, (X^2+1)p(X)^2)$;
\item[(\thethm.3)] $(T_m,\, T_n)$ with $\gcd(m,n)=1$;
\item[(\thethm.4)] $(T_m,\,-T_n)$ with $\gcd(m,n)>1$;
\item[(\thethm.5)] $((X^2-1)^3,\,3X^4-4X^3)$.
\end{enumerate}
\end{cor}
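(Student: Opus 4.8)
The plan is to derive Corollary~\ref{BTcor} directly from Theorem~\ref{BTthm} with $K=\C$. That result already provides decompositions $F=E\circ F_1\circ\mu$ and $G=E\circ G_1\circ\nu$ in which $(F_1,G_1)$ or $(G_1,F_1)$ is one of six explicit pairs, so all that remains is to massage each of those six pairs into one of the five forms listed here. The basic tool is the observation that for any linear $\rho,\sigma_1,\sigma_2\in\C[X]$ one may replace the datum $(E,F_1,G_1,\mu,\nu)$ by $(E\circ\rho,\ \rho\iter{-1}\circ F_1\circ\sigma_1,\ \rho\iter{-1}\circ G_1\circ\sigma_2,\ \sigma_1\iter{-1}\circ\mu,\ \sigma_2\iter{-1}\circ\nu)$ without disturbing the identities $F=E\circ F_1\circ\mu$, $G=E\circ G_1\circ\nu$ or the condition $\deg\mu=\deg\nu=1$; that is, one is free to post-compose $F_1$ and $G_1$ by a single common linear and to pre-compose each of them by an arbitrary (possibly different) linear. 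One may also freely modify the auxiliary polynomial $p$. Over $\C$ this gives plenty of room, since every nonzero complex number has an $m\tth$ root for each $m$.

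With that in hand I would dispose of the three monomial-type pairs first. The pair $(X^m,\,aX^rp(X)^m)$ becomes $(X^m,\,X^rp(X)^m)$ after replacing $p$ by $a^{1/m}p$. The pair $(X^2,\,(aX^2+b)p(X)^2)$ with $a,b\in\C^*$ becomes $(X^2,\,(X^2+1)p(X)^2)$ after pre-composing the second entry by $X\mapsto(b/a)^{1/2}X$, which turns $aX^2+b$ into $b(X^2+1)$, and then absorbing the scalar $b$ into $p$. The one wrinkle here --- and the closest thing to a subtlety in the whole argument --- is that $X^2$ cannot be post-composed by a nontrivial linear without leaving the prescribed form, so the normalizing constant has to travel through $p$ rather than through $E$; fortunately no post-composition is needed. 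Finally $((aX^2-1)^3,\,3X^4-4X^3)$ with $a\in\C^*$ becomes $((X^2-1)^3,\,3X^4-4X^3)$ after pre-composing the first entry by $X\mapsto a^{-1/2}X$.

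For the three Chebychev pairs I would first rewrite the $D$-polynomials using \eqref{TD}, which for $\beta\in\C^*$ and $n\in\N$ gives $D_n(X,\beta)=\beta^{n/2}T_n(\beta^{-1/2}X)$ once a square root of $\beta$ is chosen. Applying this to $(D_m(X,a^n),\,D_n(X,a^m))$ turns it into $(a^{mn/2}T_m(a^{-n/2}X),\,a^{mn/2}T_n(a^{-m/2}X))$; the common outer scalar $a^{mn/2}$ is removed by a common post-composition absorbed into $E$, and the two inner scalings are then killed by separate pre-compositions, producing $(T_m,T_n)$ with $\gcd(m,n)=1$. The pair $(a^{-m/2}D_m(X,a),\,-b^{-n/2}D_n(X,b))$ is already $(T_m(a^{-1/2}X),\,-T_n(b^{-1/2}X))$, and two pre-compositions (no post-composition needed) turn it into $(T_m,-T_n)$ with $\gcd(m,n)=2$. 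Lastly, $(D_m(X,a^{n/d}),\,-D_n(X\cos(\pi/d),a^{m/d}))$ with $d=\gcd(m,n)\ge3$ becomes $(a^{mn/(2d)}T_m(a^{-n/(2d)}X),\,-a^{mn/(2d)}T_n(a^{-m/(2d)}\cos(\pi/d)\,X))$; stripping the common outer scalar and then normalizing the inner linears --- which uses $\cos(\pi/d)\ne0$, valid because $d\ge3$ --- yields $(T_m,-T_n)$ with $\gcd(m,n)=d\ge3$. So both of these last two pairs land in the single form $(T_m,-T_n)$ with $\gcd(m,n)>1$.

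I do not expect any real obstacle: the entire argument is bookkeeping. The only points one must be careful not to slip on are that all the scaling constants involved ($a^{1/m}$, $(b/a)^{1/2}$, $a^{-n/(2d)}$, and so on) are genuinely nonzero, which is guaranteed by $a,b\in\C^*$, and that $\cos(\pi/d)\ne0$ for $d\ge3$, so that every composition performed above is by an honest degree-one polynomial. Once the six cases of Theorem~\ref{BTthm} have been reduced as indicated, the pair $(F_1,G_1)$ (or $(G_1,F_1)$) is exactly one of the five pairs in the statement, and the corollary follows.
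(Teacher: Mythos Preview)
Your proposal is correct and follows essentially the same approach as the paper's proof: both start from Theorem~\ref{BTthm} over $K=\C$, then observe that each of the six Bilu--Tichy pairs can be brought into one of the five listed forms by a common post-composition and separate pre-compositions with linears (absorbed into $E$, $\mu$, $\nu$), using \eqref{TD} to convert Dickson polynomials to Chebychev polynomials. The paper merely asserts the existence of such linears $\ell_1,\ell_2,\ell_3$ without working through the cases, whereas you carry out the normalization explicitly; your treatment is more detailed but otherwise identical in substance.
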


\begin{proof}
Let $E,F_1,G_1,\mu,\nu$ satisfy the conclusion of
Theorem~\ref{BTthm}.  In light of (\ref{TD}), if a pair $(f,g)$ has the
form of one of (\ref{BTthm}.1)--(\ref{BTthm}.6), then there are linear
$\ell_i\in\C[X]$ for which
$(\ell_1\circ f\circ\ell_2,\,\ell_1\circ g\circ\ell_3)$ has the form of
one of (\ref{BTcor}.1)--(\ref{BTcor}.5).  This implies that $(F,G)$ has
the desired form, since we can replace $E$ by $E\circ\ell_1$ and
replace $(\mu,\nu)$ by either $(\ell_2\circ\mu,\,\ell_3\circ\nu)$
or $(\ell_3\circ\mu,\,\ell_2\circ\nu)$.
\end{proof}

\begin{remark}
The converse of Corollary~\ref{BTcor} is also true; since it is not
needed for the present paper, we only sketch the proof.  It suffices to show
that, for each pair $(f,g)$ satisfying one of (\ref{BTcor}.1)--(\ref{BTcor}.5),
we have $f\circ\phi=g\circ\psi$ for some Laurent polynomials
$\phi,\psi\in\C(X)$ which are not both constant.  For this, observe that
\begin{align*}
&X^m \circ X^r p(X^m) = X^r p(X)^m \circ X^m; \\
&X^2 \circ (X+(4X)^{-1})\,p(X-(4X)^{-1}) = 
 (X^2+1)\,p(X)^2\circ (X-(4X)^{-1}); \\
&T_m \circ T_n = T_n \circ T_m; \\
&T_m \circ (X^n+X^{-n}) = -T_n\circ ((\zeta X)^m+(\zeta X)^{-m})
\quad\text{where $\zeta^{mn}=-1$;\,\, and} \\
&(X^2-1)^3 \circ \frac{X^2+2X+X^{-1}-(2X)^{-2}}{\sqrt{3}} =\\
&\quad =(3X^4-4X^3) \circ \frac{(X+1-(2X)^{-1})^3+4}{3}.
\end{align*}
\end{remark}

\begin{remark}
Our statement of Theorem~\ref{BTthm} differs slightly from
\cite[Thm.\ 9.3]{BT},
since there is a mistake in the definition of specific pairs in \cite{BT}
(the terms $a^{m/d}$ and $a^{n/d}$ should be interchanged).  The proof of
\cite[Thm.\ 9.3]{BT} contains some minor errors related to this point, but they
are easy to correct.  Also, although the sentence in \cite{BT} following the
definition of specific pairs is false for odd $n$ (because implication (9) is
false for odd $n$), neither this nor (9) is used in the paper \cite{BT}.
\end{remark}


\section{Polynomial decomposition}
\label{sec decomposition}

Our proof relies on several results about decompositions of polynomials.
Especially, we make crucial use of the following result proved in the
companion paper \cite[Thm.~1.4]{MZ}:

\begin{thm}\label{iterates}
Pick $f\in\C[X]$ with $\deg(f)=n>1$, and suppose there is no linear
$\ell\in\C[X]$ such that $\ell\circ f\circ\ell\iter{-1}$ is either
$X^n$ or $T_n$ or $-T_n$.  Let $r,s\in\C[X]$ and $d\in\N$
satisfy $r\circ s=f\iter{d}$.  Then we have
\begin{align*}
r&=f\iter{i}\circ R \\
s&=S\circ f\iter{j} \\
R\circ S&=f\iter{k}
\end{align*}
where $R,S\in\C[X]$ and $i,j,k\in\N_0$ with $k\le\log_2(n+2)$.
\end{thm}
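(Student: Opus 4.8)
The plan is to analyze the decomposition $r\circ s=f\iter{d}$ by repeatedly peeling off copies of $f$ from the left of $r$ and from the right of $s$, reducing to a ``core'' equation $R\circ S=f\iter{k}$ in which neither $R$ nor $S$ can absorb a further factor of $f$; the real content is that this core has bounded complexity $k\le\log_2(n+2)$. First I would recall from \cite{MZ} the structure of the set of all right (equivalently left) components of a fixed polynomial, which is what Theorem~\ref{iterates} of \cite{MZ} gives us: since $f$ is not linearly conjugate to $X^n$, $T_n$, or $-T_n$, the polynomial $f$ is ``non-exceptional'' in the sense of \cite{MZ}, so its decomposition structure is rigid. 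Concretely, one shows that if $u\circ v=f\iter{d}$ with $\deg(v)$ divisible by $n=\deg(f)$, then in fact $v=w\circ f$ for some $w$, and dually for left components: a component of an iterate of a non-exceptional $f$ whose degree is a multiple of $n$ actually has $f$ as a factor on the appropriate side. This is the tool that lets one strip off $f$'s.

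Given that, the argument proceeds as follows. Write $r\circ s=f\iter{d}$. Let $i$ be the largest integer such that $r=f\iter{i}\circ R$ for some polynomial $R$ (this is well-defined and $i\le d$), and then, applying the same reasoning to the tail, let $j$ be the largest integer such that $s=S\circ f\iter{j}$ for some $S$. Cancelling these from $f\iter{d}=f\iter{i}\circ R\circ S\circ f\iter{j}$—using that $f$ is not linear, so it is left- and right-cancellable in the composition semigroup—gives $R\circ S=f\iter{k}$ where $k=d-i-j\ge 0$. By maximality of $i$, the polynomial $R$ has no copy of $f$ splitting off on the left, and by maximality of $j$ (here one needs that stripping from the right didn't re-enable stripping from the left, which follows again from the rigidity in \cite{MZ}) $S$ has no copy of $f$ splitting off on the right. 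It remains to bound $k$.

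The key step—and the main obstacle—is the bound $k\le\log_2(n+2)$. The idea is that $f\iter{k}=R\circ S$ where $\deg(R)$ is not divisible by $n$ (else we could strip another $f$ off $R$ on the left by the \cite{MZ} structure result) and likewise $\deg(S)$ is not divisible by $n$ on the right; more strongly, $R$ should be ``primitive from the left'' and $S$ ``primitive from the right'' with respect to the factor $f$. One then studies the common refinement of the two decompositions $f\iter{k}=R\circ S=f\circ f\iter{k-1}=f\iter{k-1}\circ f$ of the degree-$n^k$ polynomial $f\iter{k}$ via Ritt's theory and the refinement results of \cite{MZ}: each ``block'' of $f$'s in $f\iter{k}$ that is split by the cut between $R$ and $S$ forces a nontrivial interleaving, and—because $f$ is non-exceptional—the number of such splittings is severely constrained. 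Quantitatively, one expects an inductive estimate: if $k\ge 1$ then comparing $R\circ S$ with $f\circ f\iter{k-1}$ and using the structure theorem produces a decomposition of $f\iter{k'}$ with $k'$ roughly $k-1$ but $\deg R$ replaced by something smaller, and iterating this ``halves'' the relevant degree each time, giving the $\log_2$ bound; the $+2$ accounts for the small-degree base cases (degrees $2$ and $3$, where the behavior of $X^2-1$-type and $3X^4-4X^3$-type examples must be ruled out by the non-exceptionality hypothesis). I expect the bookkeeping in this inductive bound—tracking exactly which components are non-exceptional after each reduction and verifying the base cases—to be the technically delicate heart of the proof, and it is presumably the step for which the full strength of \cite{MZ} is invoked.
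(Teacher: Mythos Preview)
This theorem is not proved in the present paper at all: it is quoted as \cite[Thm.~1.4]{MZ}, and the paper explicitly says its proof ``relies on the full strength of the new description given in \cite{MZ} for the collection of all decompositions of a polynomial.'' So there is no in-paper proof to compare your proposal against; what you have written is effectively an outline of how one might try to reprove the \cite{MZ} result.

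As an outline, your reduction step is correct and standard. Choosing $i$ maximal with $r=f\iter{i}\circ R$ and $j$ maximal with $s=S\circ f\iter{j}$, and cancelling, does give $R\circ S=f\iter{k}$; and Lemma~\ref{multipledeg} (Engstrom) indeed yields that $n\nmid\deg(R)$ and $n\nmid\deg(S)$ after stripping. But note that this alone places no bound on $k$: for composite $n$ one can have $\deg(R)\cdot\deg(S)=n^k$ with neither factor divisible by $n$ for arbitrarily large $k$ (e.g.\ $n=6$, $\deg(R)=2^k$, $\deg(S)=3^k$). So the degree-divisibility observation does not suffice, and the bound must come from finer structural information about which polynomials of such degrees can actually occur as components of $f\iter{k}$.

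Your sketch of that finer step is not a proof. The phrases ``forces a nontrivial interleaving,'' ``halves the relevant degree each time,'' and the appeal to ``$X^2-1$-type and $3X^4-4X^3$-type examples'' are suggestive but do not constitute an argument; in particular, you have not specified what quantity is being halved, what the inductive hypothesis is, or why non-exceptionality of $f$ propagates through the reduction. You candidly flag this as ``the technically delicate heart of the proof'' and defer it to \cite{MZ}, which is honest---but then your proposal is really the statement ``set up the obvious reduction and invoke \cite{MZ} for the hard part,'' which is exactly what the paper itself does. If you intend to supply an independent proof, the entire content lies in the $\log_2(n+2)$ bound, and that part remains to be written.
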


The proof of this result relies on the full strength of the new description
given in \cite{MZ} for the collection of all decompositions of a polynomial;
this in turn depends on the classical results of Ritt \cite{Ritt} among
other things.  By contrast, all the other polynomial decomposition results we
need can be proved fairly quickly from first principles.

The next result follows from results of Engstrom \cite{Engstrom}; for a proof
using methods akin to Ritt's \cite{Ritt}, see \cite[Cor.~2.9]{MZ}.

\begin{lemma}\label{multipledeg}
Pick $a,b,c,d\in\C[X]\setminus\C$ with $a\circ b = c\circ d$.
If $\deg(c)\mid\deg(a)$, then $a=c\circ t$ for some $t\in\C[X]$.
If $\deg(d)\mid\deg(b)$, then $b=t\circ d$ for some $t\in\C[X]$.
\end{lemma}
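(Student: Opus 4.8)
The plan is to pass to function fields. Set $w:=a\circ b=c\circ d$, a polynomial of degree $D:=\deg(a)\deg(b)=\deg(c)\deg(d)$, and regard $\C(w(X))\subseteq\C(b(X)),\C(d(X))\subseteq\C(X)$. I claim it suffices to show that $\C(d(X))\subseteq\C(b(X))$ whenever $\deg(c)\mid\deg(a)$. Indeed, $\C(d(X))\subseteq\C(b(X))$ means $d=T\circ b$ for some rational function $T$, and $T$ must in fact be a polynomial because $b$ and $d$ are polynomials (each having a single pole, at $\infty$); then $a\circ b=c\circ d=(c\circ T)\circ b$, and since $b$ is nonconstant we may cancel it on the right to get $a=c\circ T$, the first assertion. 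The second assertion, $b=t\circ d$ under the hypothesis $\deg(d)\mid\deg(b)$, is obtained the same way from $\C(b(X))\subseteq\C(d(X))$; and because $\deg(a)\deg(b)=\deg(c)\deg(d)$ forces $\deg(d)\mid\deg(b)\iff\deg(a)\mid\deg(c)$, this is precisely the claimed implication with the pairs $(a,b)$ and $(c,d)$ interchanged.

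To prove that implication I would use ramification at infinity. Since $w$ is a polynomial, the place $\infty$ of $\C(w(X))$ has a unique extension to $\C(X)$---the pole of $X$---and it is totally ramified, with ramification index $D$. Pass to the Galois closure $\widehat{L}$ of $\C(X)/\C(w(X))$ and set $G:=\Gal(\widehat{L}/\C(w(X)))$, $H:=\Gal(\widehat{L}/\C(X))$. All residue fields are $\C$, so at a place of $\widehat{L}$ above $\infty$ the decomposition group equals the inertia group $I$, which is cyclic in characteristic zero; and since $\C(X)$ has only one place above $\infty$, a count of double cosets gives $G=HI$. Equivalently, a generator $\sigma$ of $I$ acts on the fiber $G/H$ (of size $D$) as a single $D$-cycle. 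Intermediate fields of $\C(X)/\C(w(X))$ correspond to over-groups $H\le K\le G$, equivalently to $G$-invariant partitions of $G/H$; each such partition is in particular $\sigma$-invariant, and a $\sigma$-invariant partition of a $D$-cycle is the partition into residue classes modulo $e$ for some $e\mid D$. Hence $\C(b(X))$ corresponds to the residue classes modulo $\deg(a)$ (there are $\deg(a)$ blocks, each of size $\deg(b)$) and $\C(d(X))$ to the residue classes modulo $\deg(c)$; so $\C(d(X))\subseteq\C(b(X))$---i.e.\ the partition belonging to $\C(b(X))$ refines that belonging to $\C(d(X))$---holds exactly when $\deg(c)\mid\deg(a)$, as required.

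I do not anticipate a serious difficulty: the one substantive input is the structural fact $G=HI$ with $I$ cyclic---that the monodromy of a polynomial at infinity is a single cycle---and this is exactly where the hypothesis that $w$ is a polynomial, rather than an arbitrary rational function, is used; the analogous lattice statement is false for general rational $w$. Everything else amounts to bookkeeping with the partitions of a cyclic permutation. It is worth noting that $\C(b(X))$ need not be the only intermediate field of its degree for a general $w$---a ``generic'' polynomial has no proper intermediate field at all---but once such a field is known to exist, the cycle structure of $\sigma$ pins it down as the residue-class partition described above, which is all the argument requires. Alternatively one may simply quote Engstrom, or the Ritt-style argument in \cite{MZ}; a more computational route is induction on $\deg(a)/\deg(c)$ using Ritt's theorem on decompositions of coprime degree, but it is messier.
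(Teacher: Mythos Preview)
Your proof is correct. The monodromy argument is clean: the key observation that a generator of inertia at infinity acts as a single $D$-cycle on $G/H$ forces the $\sigma$-invariant block systems to form a chain indexed by divisors of $D$, so any two intermediate fields of $\C(X)/\C(w(X))$ are automatically comparable once their degrees divide one another. The reduction step (deducing $a=c\circ T$ from $d=T\circ b$, and the polynomiality of $T$) is handled correctly.

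By way of comparison: the paper does not actually prove this lemma. It simply cites Engstrom's 1941 paper and \cite[Cor.~2.9]{MZ}, remarking that the latter uses ``methods akin to Ritt's.'' Your monodromy argument is precisely in that spirit---Ritt's original approach to polynomial decomposition is built on the same fact that the monodromy of a polynomial cover at infinity is a full cycle---so your proof is very likely close to what appears in \cite{MZ}, though you have written it out in self-contained Galois-theoretic language rather than invoking the reference. Engstrom's argument is more elementary and computational, working directly with coefficients, which is less conceptual but avoids passing to the Galois closure.

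One minor comment on exposition: your parenthetical remark that ``$\C(b(X))$ need not be the only intermediate field of its degree for a general $w$'' is slightly misleading as phrased---your own argument shows that for \emph{polynomial} $w$ the intermediate field \emph{is} uniquely determined by its degree, since the $\sigma$-invariant partitions form a chain. Presumably you meant that for general \emph{rational} $w$ this uniqueness fails, or perhaps that not every divisor of $D$ is realized by an intermediate field; either way the mathematics is unaffected, but the sentence could be tightened.
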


We will often use the above two results in conjunction with one another:

\begin{cor}\label{corit}
Pick $f\in\C[X]$ with $\deg(f)=n>1$, and assume there is no linear
$\ell\in\C[X]$ such that $\ell\circ f\circ\ell\iter{-1}$ is either
$X^n$ or $T_n$ or $-T_n$.  Then there is a finite subset
$S$ of $\C[X]$ such that, if $r,s\in\C[X]$ and $d\in\N$ satisfy
$r\circ s=f\iter{d}$, then
\begin{itemize}
\item either $r=f\circ t$ (with $t\in\C[X]$) or $r\circ\ell\in S$
(with $\ell\in\C[X]$ linear);
\item either $s=t\circ f$ (with $t\in\C[X]$) or $\ell\circ s\in S$
(with $\ell\in\C[X]$ linear).
\end{itemize}
\end{cor}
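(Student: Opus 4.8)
The plan is to apply Theorem~\ref{iterates} to the decomposition $r\circ s=f\iter{d}$, writing $r=f\iter{i}\circ R$, $s=S\circ f\iter{j}$, $R\circ S=f\iter{k}$ with $k\le\log_2(n+2)$, and then to control the ``bounded-complexity'' factors $R$ and $S$ using Lemma~\ref{multipledeg}. First I would dispose of the trivial case $r\in\C$ or $s\in\C$ separately (these force $d=0$, say, or are handled by taking $S$ to contain the relevant constants/linears), so assume $r,s\notin\C$; then $R,S\notin\C$ as well. Now distinguish two cases for $r$ according to whether $\deg(R)>1$. If $\deg(R)=1$, then $r=f\iter{i}\circ R$ with $R$ linear; if moreover $i\ge 1$ we get $r=f\circ(f\iter{i-1}\circ R)=f\circ t$, and if $i=0$ then $r=R$ is linear and $r\circ\ell=R\circ\ell$ can be made the identity $X$ by choosing $\ell=R\iter{-1}$, so $r\circ\ell\in S$ provided $X\in S$. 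If instead $\deg(R)>1$, then since $R$ appears in $R\circ S=f\iter{k}$ with $k$ bounded, I would argue that $R$ itself can be normalized into a finite set: from Lemma~\ref{multipledeg} applied to $R\circ S=f\circ f\iter{k-1}$, since $\deg(f)=n\mid\deg(f\iter{k})=n^k$ divides... — more precisely, one uses that any left composition factor of $f\iter{k}$ of degree $>1$ is, up to a linear on the right, a left composition factor arising from finitely many possibilities because $k$ is bounded. The symmetric argument handles $s$.

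The cleanest way to package the finiteness is this: let $N=\lceil\log_2(n+2)\rceil$, and let $\mathcal{D}$ be the (finite) set of left composition factors $u$ of $f\iter{1},f\iter{2},\dots,f\iter{N}$ — i.e.\ all $u\in\C[X]$ with $u\circ v=f\iter{j}$ for some $v\in\C[X]$ and some $j\le N$. The set of degrees occurring is finite, and for each such degree the set of monic such $u$ with prescribed constant term is finite by the classical finiteness of polynomial decompositions (or directly: $u$ is determined up to a linear on the right by the subfield $\C(f\iter{j})\subseteq\C(u)\subseteq\C(X)$, and there are finitely many such subfields since $[\C(X):\C(f\iter{j})]=n^j$ is bounded). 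Then take $S$ to be the union of $\{X\}$ with $\{u\circ\ell: u\in\mathcal{D}\setminus\C,\ \ell\in\C[X]\ \text{linear}\}$ normalized to a finite set of representatives of the right-linear-equivalence classes, i.e.\ just pick one representative $u$ from each class and declare $u\in S$; likewise throw in representatives for the $s$-side. With this $S$: given $r\circ s=f\iter{d}$ and the Theorem~\ref{iterates} output, either $i\ge1$ (whence $r=f\circ t$) or $i=0$ and $r=R\in\mathcal{D}$ (since $R\circ S=f\iter{k}$ with $k\le N$), so $r\circ\ell\in S$ for suitable linear $\ell$; and symmetrically for $s$ using $j\ge1 \Rightarrow s=t\circ f$ versus $j=0 \Rightarrow s=S$, a right composition factor of $f\iter{k}$, which after a linear on the left lies in the analogous finite set.

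The main obstacle I anticipate is the finiteness statement itself — ensuring that ``left composition factors of $f\iter{j}$ for $j$ bounded, modulo right composition with a linear'' form a finite set. This is where one must invoke either the classical structure theory (finiteness of the decomposition lattice of a fixed polynomial, hence of $f\iter{j}$ for each fixed $j\le N$, and there are only finitely many such $j$) or the explicit description in \cite{MZ}; one has to be slightly careful that ``finite up to right-linear equivalence'' is the right notion and that it is preserved under the operations used. A secondary, purely bookkeeping, subtlety is handling the degenerate cases $\deg(r)=1$ or $\deg(s)=1$ and the base iterate $f\iter{0}=X$ cleanly, but these are minor once $X$ (and the identity/linears) are allowed into $S$. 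Everything else is a direct concatenation of Theorem~\ref{iterates} and Lemma~\ref{multipledeg}.
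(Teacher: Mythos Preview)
Your approach is correct and is essentially the paper's intended proof: apply Theorem~\ref{iterates}, note that $i\ge 1$ gives $r=f\circ t$ immediately, and for $i=0$ observe that $r=R$ is a left factor of some $f\iter{k}$ with $k\le\log_2(n+2)$, hence lies in a finite set up to right composition with a linear (and symmetrically for $s$).

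One sharpening: the finiteness step you justify via intermediate-field counting is exactly what Lemma~\ref{multipledeg} delivers, and this is presumably why the paper flags the corollary as using the two results ``in conjunction.'' Namely, if $R_1\circ S_1=R_2\circ S_2=f\iter{k}$ with $\deg(R_1)=\deg(R_2)$, then Lemma~\ref{multipledeg} gives $R_2=R_1\circ\ell$ with $\ell$ linear; so left factors of $f\iter{k}$ of a given degree form a single right-linear equivalence class, and one may simply take $S$ to contain one representative for each divisor of $n^k$ and each $k\le\lfloor\log_2(n+2)\rfloor$ (together with $X$ and the analogous right-factor representatives). This replaces your appeal to the general structure theory of decompositions and makes the invocation of Lemma~\ref{multipledeg} explicit, but the argument is otherwise the same as yours.
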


As an immediate consequence of the functional equation defining $T_n$,
we see that $T_n$ is either an even or odd polynomial:

\begin{lemma}\label{parity}
For any $n\in\N$, we have $T_n(-X)=(-1)^n T_n(X)$.
\end{lemma}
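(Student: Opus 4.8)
The plan is to deduce the claim directly from the defining functional equation $T_n(X + X^{-1}) = X^n + X^{-n}$, working in the rational function field $\C(X)$.

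First I would substitute $X \mapsto -X$ into the defining equation. The right-hand side becomes $(-X)^n + (-X)^{-n} = (-1)^n(X^n + X^{-n})$, while the argument of $T_n$ on the left becomes $-X - X^{-1} = -(X + X^{-1})$. Hence
\[
T_n\bigl(-(X + X^{-1})\bigr) = (-1)^n\bigl(X^n + X^{-n}\bigr) = (-1)^n\,T_n(X + X^{-1}).
\]
So the polynomials $T_n(-Z)$ and $(-1)^n T_n(Z)$ agree after the substitution $Z = X + X^{-1}$.

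Finally I would observe that $Z = X + X^{-1}$, as $X$ ranges over $\C^*$, takes infinitely many values (in fact every value in $\C$, since for any $c \in \C$ the equation $X^2 - cX + 1 = 0$ has a root in $\C^*$). Two single-variable polynomials that agree at infinitely many points of $\C$ are identical, so $T_n(-Z) = (-1)^n T_n(Z)$ as an identity in $\C[Z]$, which is the assertion.

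There is no real obstacle here: the argument is a one-line substitution followed by the standard remark that a polynomial identity holding on an infinite set is an identity. The only point requiring a word of care is justifying that $X + X^{-1}$ assumes infinitely many values, so that we may pass from the substituted identity to an identity of polynomials in a single indeterminate.
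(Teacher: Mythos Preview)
Your argument is correct and is exactly what the paper intends: it states the lemma as ``an immediate consequence of the functional equation defining $T_n$'' without spelling out the substitution, so you have simply written out the one-line check.
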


Note that $X^d \circ X^e = X^{de}$ and $T_d\circ T_e = T_{de}$.
By Lemma~\ref{multipledeg}, these are essentially the only decompositions
of $X^n$ and $T_n$:

\begin{lemma}\label{chebdec}
If $n\in\N$ and $f,g\in\C[X]$ satisfy $f\circ g=X^n$,
then $f=X^d\circ\ell$ and $g=\ell\iter{-1}\circ X^{n/d}$
for some linear $\ell\in\C[X]$ and some positive divisor $d$ of $n$.
If $n\in\N$ and $f,g\in\C[X]$ satisfy $f\circ g=T_n$,
then $f=T_d\circ\ell$ and $g=\ell\iter{-1}\circ T^{n/d}$
for some linear $\ell\in\C[X]$ and some positive divisor $d$ of $n$.
\end{lemma}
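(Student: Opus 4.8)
The two statements are proved the same way, so I focus on the decomposition $f\circ g = X^n$; the case $f\circ g = T_n$ is identical, using $T_d\circ T_e = T_{de}$ in place of $X^d\circ X^e = X^{de}$. Write $d:=\deg(f)$, so $\deg(g)=n/d$ and $d\mid n$. The plan is to apply Lemma~\ref{multipledeg} to the identity $f\circ g = X^d\circ X^{n/d}$. First I would note $\deg(X^d)=d$ divides $\deg(f)=d$, so Lemma~\ref{multipledeg} gives $f = X^d\circ t$ for some $t\in\C[X]$; comparing degrees forces $\deg(t)=1$, so $t=\ell$ is linear. Symmetrically, $\deg(X^{n/d})=n/d$ divides $\deg(g)=n/d$, so Lemma~\ref{multipledeg} gives $g = s\circ X^{n/d}$ with $s$ linear.

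It then remains to identify $s$ with $\ell\iter{-1}$. From $f\circ g = X^n$ we get $(X^d\circ\ell)\circ(s\circ X^{n/d}) = X^n = X^d\circ X^{n/d}$, i.e.\ $X^d\circ(\ell\circ s)\circ X^{n/d} = X^d\circ X^{n/d}$. Since $X^d$ and $X^{n/d}$ are polynomials (hence cancellable under composition on the appropriate side — composition of polynomials is cancellable on the left when the outer map is nonconstant, and on the right when the inner map is nonconstant), this yields $\ell\circ s = X$, so $s = \ell\iter{-1}$. Hence $f = X^d\circ\ell$ and $g = \ell\iter{-1}\circ X^{n/d}$, as claimed, with $d$ ranging over the positive divisors of $n$.

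There is no serious obstacle here: the whole argument is a two-line application of Lemma~\ref{multipledeg} plus cancellation. The only point requiring a word of care is the cancellation step, where one uses that a nonconstant polynomial $h$ satisfies $h\circ a = h\circ b \implies a = b$ and $a\circ h = b\circ h \implies a = b$ (the first because $h$ is a nonconstant — hence non-injective-only-on-a-finite-set, but in fact for polynomials $h\circ a = h\circ b$ with $\deg a=\deg b$ already gives $a=b$ by looking at leading coefficients and inducting; the second because $h$ takes infinitely many values). One should also record the minor observation that $d = \deg(f)$ automatically divides $n = \deg(f)\deg(g)$, so the divisor hypothesis in the statement is not an extra constraint but simply the value $\deg(f)$.
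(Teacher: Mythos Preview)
Your approach is exactly the paper's: the lemma is stated immediately after the identities $X^d\circ X^e=X^{de}$ and $T_d\circ T_e=T_{de}$, with the remark that by Lemma~\ref{multipledeg} these are essentially the only decompositions of $X^n$ and $T_n$.

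However, your cancellation step contains a false assertion: composition of polynomials is \emph{not} left-cancellable when the outer map is nonconstant (e.g.\ $X^2\circ X=X^2\circ(-X)$), and your ``leading coefficients and inducting'' justification fails for the same reason---matching leading terms of $h\circ a$ and $h\circ b$ only gives $(\text{lc }a)^{\deg h}=(\text{lc }b)^{\deg h}$. Concretely, from $X^d\circ(\ell\circ s)\circ X^{n/d}=X^n$, right-cancellation (which \emph{is} valid) yields only $(\ell\circ s)^d=X^d$, hence $\ell\circ s=\zeta X$ for some $d$th root of unity $\zeta$, not necessarily $\ell\circ s=X$. The fix is immediate: replace $\ell$ by $\tilde\ell:=\zeta^{-1}\ell$; then $f=X^d\circ\ell=X^d\circ(\zeta\tilde\ell)=\zeta^d\,X^d\circ\tilde\ell=X^d\circ\tilde\ell$ and $\tilde\ell\circ s=X$, so $g=\tilde\ell\iter{-1}\circ X^{n/d}$ as required. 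The Chebychev case needs the same adjustment: $T_d\circ\mu=T_d$ with $\mu$ linear forces $\mu\in\{X,-X\}$, and $\mu=-X$ occurs only when $d$ is even (so $T_d(-X)=T_d(X)$), whence the sign can again be absorbed into $\ell$.
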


The following simple result describes the linear relations between polynomials
of the form $X^n$ or $T_n$ \cite[Lemmas 3.13 and 3.14]{MZ}:

\begin{lemma}\label{chebeq}
Pick $n\in\N$ and linear $a,b\in\C[X]$.
\begin{enumerate}
\item[(\thethm.1)] If $n>1$ and $a\circ X^n\circ b = X^n$,
then $b=\beta X$ and $a=X/\beta^n$ for some $\beta\in\C^*$.
\item[(\thethm.2)] If $n>2$ then $a\circ X^n\circ b\ne T_n$.
\item[(\thethm.3)] If $n>2$ and $a\circ T_n\circ b = T_n$, then $b=\epsilon X$
and $a=\epsilon^n X$ for some $\epsilon\in\{1,-1\}$.
\end{enumerate}
\end{lemma}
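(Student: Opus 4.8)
The plan is to prove the three parts separately; throughout I write $a(X)=\alpha X+\gamma$ and $b(X)=\beta X+\delta$ with $\alpha,\beta\in\C^*$ and $\gamma,\delta\in\C$, and I will repeatedly use an elementary description of the critical points of $T_n$ coming from its defining equation. Substituting $X=2\cos\theta$ into $T_n(X+X^{-1})=X^n+X^{-n}$ gives $T_n(2\cos\theta)=2\cos(n\theta)$, and differentiating in $\theta$ yields $T_n'(2\cos\theta)=n\sin(n\theta)/\sin\theta$; hence the degree-$(n-1)$ polynomial $T_n'$ has the $n-1$ distinct (so simple) roots $2\cos(k\pi/n)$ for $k=1,\dots,n-1$. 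I will also use that $T_n$ is monic of degree $n$.

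For (\ref{chebeq}.1), I would simply expand $a\circ X^n\circ b=\alpha(\beta X+\delta)^n+\gamma$ and equate with $X^n$. Comparing leading coefficients gives $\alpha\beta^n=1$; comparing the coefficient of $X^{n-1}$ gives $\alpha n\beta^{n-1}\delta=0$, so $\delta=0$ because $n>1$; and then the constant term forces $\gamma=0$. Hence $b=\beta X$ and $a=X/\beta^n$.

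For (\ref{chebeq}.2), suppose for contradiction that $a\circ X^n\circ b=T_n$ for some linear $a,b$. Then $T_n-\gamma=\alpha(\beta X+\delta)^n$, so $T_n'=(T_n-\gamma)'=\alpha n\beta(\beta X+\delta)^{n-1}$ would have its unique root $-\delta/\beta$ with multiplicity $n-1$. But $T_n'$ has $n-1$ distinct roots, so $n-1\le 1$, contradicting $n>2$.

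For (\ref{chebeq}.3), assume $a\circ T_n\circ b=T_n$ with $n>2$. Since $a$ and $b$ have degree one, the maps they induce on $\mathbb P^1$ are unramified automorphisms fixing $\infty$, so comparing critical points of the two sides gives $b^{-1}(\operatorname{Crit}T_n)=\operatorname{Crit}T_n$; as $b$ fixes $\infty$, $b$ permutes the set $C:=\{2\cos(k\pi/n):1\le k\le n-1\}$ of finite critical points of $T_n$. The involution $k\mapsto n-k$ shows $C$ is symmetric about $0$, so its centroid is $0$; since $b$ sends the centroid of $C$ to the centroid of $b(C)=C$, we get $b(0)=0$, i.e. $b=\beta X$. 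Then $\beta C=C$, so comparing the largest modulus $2\cos(\pi/n)>0$ occurring in $C$ forces $|\beta|=1$; and since the elements of $C$ of that modulus are exactly the two points $\pm 2\cos(\pi/n)$ (here $n>2$ is used, so $1\neq n-1$), multiplication by $\beta$ permutes $\{\pm 2\cos(\pi/n)\}$, whence $\beta=:\epsilon\in\{1,-1\}$. Finally, Lemma~\ref{parity} gives $T_n(\epsilon X)=\epsilon^n T_n(X)$, so $a(\epsilon^n T_n(X))=T_n(X)$; equating leading coefficients of these polynomials gives $\alpha\epsilon^n=1$, i.e. $\alpha=\epsilon^{-n}=\epsilon^n$, and then $\gamma=0$, so $a=\epsilon^n X$. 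The only step needing any care is pinning down $\beta\in\{1,-1\}$ in (\ref{chebeq}.3); everything else is a direct coefficient comparison once the roots of $T_n'$ are in hand, so I anticipate no real obstacle.
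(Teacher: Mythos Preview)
Your proof is correct. The paper itself does not prove this lemma but simply cites \cite[Lemmas~3.13 and~3.14]{MZ}, so there is no in-paper argument to compare against; your self-contained argument via coefficient comparison and the explicit description of the critical points of $T_n$ is a clean substitute. One minor remark: in part~(\ref{chebeq}.3) you could bypass the $|\beta|=1$ step entirely, since the observation that multiplication by $\beta$ must permute the two-element set $\{\pm 2\cos(\pi/n)\}$ of maximal-modulus elements already forces $\beta\in\{1,-1\}$ directly.
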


The previous two results have the following consequence \cite[Cor.~3.10]{MZ}:

\begin{lemma}\label{chebswap}
Pick $r,s\in\Z$ and linear $\ell,\ell_1,\ell_2\in\C[X]$.
If $r,s>1$ and $X^r\circ \ell\circ X^s = \ell_1\circ X^{rs}\circ\ell_2$,
then $\ell=\alpha X$ for some $\alpha\in\C^*$.
If $r,s>2$ and $T_r\circ \ell\circ T_s=\ell_1\circ T_{rs}\circ\ell_2$,
then $\ell = \epsilon X$ for some $\epsilon\in\{1,-1\}$.
\end{lemma}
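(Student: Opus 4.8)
The plan is to treat both assertions in parallel, since the Chebyshev statement is the exact analogue of the power-map statement. The key observation is that $X^{rs}=X^r\circ X^s$ and $T_{rs}=T_r\circ T_s$, so each hypothesis exhibits a single polynomial as a composition of four pieces in two different ways; one then peels off the outer linear factors $\ell_1,\ell_2$ and applies the decomposition results already established in this section (Lemmas~\ref{chebdec} and~\ref{chebeq}).

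Concretely, for the first statement I would start from $X^r\circ\ell\circ X^s=\ell_1\circ X^{rs}\circ\ell_2$, compose with $\ell_1\iter{-1}$ on the left and with $\ell_2\iter{-1}$ on the right, and rewrite the right-hand side as $X^r\circ X^s$, obtaining
\[
(\ell_1\iter{-1}\circ X^r\circ\ell)\circ(X^s\circ\ell_2\iter{-1})=X^r\circ X^s .
\]
The first factor on the left has degree $r$, so Lemma~\ref{chebdec} (in which the divisor $d$ is forced to equal $r$ by comparing degrees) gives $\ell_1\iter{-1}\circ X^r\circ\ell=X^r\circ m$ for some linear $m\in\C[X]$. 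Composing with $m\iter{-1}$ on the left then yields $(m\iter{-1}\circ\ell_1\iter{-1})\circ X^r\circ\ell=X^r$, and the first part of Lemma~\ref{chebeq} (which needs only $r>1$) forces $\ell=\beta X$ for some $\beta\in\C^*$, as desired. Note that only the left-hand factor produced by Lemma~\ref{chebdec} is used.

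The second statement is then handled by the identical argument with $X^n$ replaced by $T_n$ throughout and $T_{rs}=T_r\circ T_s$ in place of $X^{rs}=X^r\circ X^s$: one obtains $\ell_1\iter{-1}\circ T_r\circ\ell=T_r\circ m$ for some linear $m$, hence $(m\iter{-1}\circ\ell_1\iter{-1})\circ T_r\circ\ell=T_r$, and the third part of Lemma~\ref{chebeq} (which requires $r>2$, exactly the hypothesis available here) gives $\ell=\epsilon X$ with $\epsilon\in\{1,-1\}$. I do not expect a serious obstacle: the proof is short once the hypotheses are reorganized this way. The only points requiring care are the degree bookkeeping that pins down the relevant factor so that Lemma~\ref{chebdec} produces precisely $X^r$ (resp.\ $T_r$) rather than a different power (resp.\ Chebyshev polynomial), and checking that the numerical conditions $r,s>1$ (resp.\ $r,s>2$) are precisely what the applications of Lemma~\ref{chebeq} demand.
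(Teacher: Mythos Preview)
Your overall strategy is the right one, but there is a real error in the key step. From Lemma~\ref{chebdec} you correctly obtain $\ell_1\iter{-1}\circ X^r\circ\ell = X^r\circ m$ for some linear $m$. You then claim that composing with $m\iter{-1}$ on the left gives $(m\iter{-1}\circ\ell_1\iter{-1})\circ X^r\circ\ell = X^r$. That is false: composing on the left turns the right-hand side into $m\iter{-1}\circ X^r\circ m$, which is \emph{not} $X^r$ unless $m$ happens to be of the form $\gamma X$---and that is exactly what you are trying to prove about $\ell$. Composing on the right instead yields $\ell_1\iter{-1}\circ X^r\circ(\ell\circ m\iter{-1})=X^r$, and then Lemma~\ref{chebeq} only tells you $\ell\circ m\iter{-1}=\beta X$, i.e.\ $\ell=\beta m$, which says nothing about $\ell$ by itself.

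The fix is to use the \emph{other} factor Lemma~\ref{chebdec} provides: the same application also gives $X^s\circ\ell_2\iter{-1}=m\iter{-1}\circ X^s$, i.e.\ $m\circ X^s\circ\ell_2\iter{-1}=X^s$. Now Lemma~\ref{chebeq} (using $s>1$) forces $m=\gamma X$ for some $\gamma\in\C^*$, after which your intended application of Lemma~\ref{chebeq} to $\ell_1\iter{-1}\circ X^r\circ\ell=\gamma^r X^r$ goes through and yields $\ell=\alpha X$. The Chebyshev case is repaired in exactly the same way, using $s>2$. This also explains why the hypotheses involve both $r$ and $s$: your version, which discards the second factor, never uses $s>1$ (resp.\ $s>2$), and indeed the conclusion fails when $s=1$ (take $\ell=\ell_2=X+1$, $\ell_1=X$). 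So the sentence ``only the left-hand factor produced by Lemma~\ref{chebdec} is used'' should be dropped; both factors are needed. The paper itself does not give a proof here but cites \cite[Cor.~3.10]{MZ}.
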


We also need to know the possible decompositions of polynomials of the
form $X^i h(X)^n$ \cite[Lemma~3.11]{MZ}:

\begin{lemma}\label{decompositions}
If $a\circ b=X^i h(X)^n$ with $h\in\C[X]\setminus\{0\}$ and coprime $i,n\in\N$,
then $a=X^j \hat{h}(X)^n\circ\ell$ and
$b=\ell\iter{-1}\circ X^k \tilde{h}(X)^n$ for some $j,k\in\N$ and some
$\hat{h},\tilde{h},\ell\in\C[X]$ with $\ell$ linear.
\end{lemma}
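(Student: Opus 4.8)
The plan is to pass to a principal case and then pin down the shapes of $b$ and of $a$ by examining their fibres over $0$. Write $P:=X^ih(X)^n$. If $n=1$, or if one of $a,b$ is linear, the claim is immediate; and writing $h=X^mh_1$ with $h_1(0)\ne0$ replaces the pair $(i,n)$ by the coprime pair $(i+mn,n)$. So I may assume $n\ge2$, that $a$ and $b$ are nonlinear, and that $h(0)\ne0$; replacing $(a,b)$ by $(a\circ u\iter{-1},\,u\circ b)$ for a suitable linear $u$, I may also take $a,b,h$ (hence $P$) monic. The single fact driving the argument is that, since $\gcd(i,n)=1$, the point $0$ is the \emph{unique} zero of $P$ whose multiplicity is not divisible by $n$.

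I first determine $b$. Write $a=\prod_\ell(Y-c_\ell)^{\nu_\ell}$ with the $c_\ell\in\C$ distinct, so $P=\prod_\ell(b-c_\ell)^{\nu_\ell}$ is a product whose factors have pairwise disjoint zero sets. Put $c_0:=b(0)$, a zero of $a$ since $a(b(0))=P(0)=0$; let $\nu_0$ be its multiplicity as a zero of $a$, and let $k:=\mathrm{ord}_0(b-c_0)$. Comparing orders of vanishing at $X=0$ gives $\nu_0k=i$, so $\gcd(\nu_0,n)=\gcd(k,n)=1$. For any zero $x\ne0$ of $b-c_0$ (necessarily a zero of $h$), the order of $P$ at $x$ is $\nu_0\,\mathrm{ord}_x(b-c_0)$ and is divisible by $n$; since $\gcd(\nu_0,n)=1$ this forces $n\mid\mathrm{ord}_x(b-c_0)$. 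Hence $b-c_0=X^kw(X)^n$ for some monic $w\in\C[X]$ with $w(0)\ne0$, i.e.\ $b=\ell\iter{-1}\circ(X^kw^n)$ with $\ell(Y):=Y-c_0$.

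I then determine $a$. Set $\tilde a:=a\circ\ell\iter{-1}$ and $\beta:=X^kw^n$, so $\tilde a\circ\beta=P$ and $\tilde a(0)=P(0)=0$; write $\tilde a=Y^{\nu_0}\prod_{c_j\ne0}(Y-c_j)^{\nu_j}$, where the multiplicity of $0$ as a zero of $\tilde a$ is again $\nu_0$. Since $\beta^{\nu_0}=X^{k\nu_0}w^{n\nu_0}=X^iw^{n\nu_0}$, expanding $\tilde a\circ\beta=P$ and cancelling $X^i$ gives
\[
h^n=w^{n\nu_0}\,\prod_{c_j\ne0}(\beta-c_j)^{\nu_j},
\]
a factorization of the perfect $n$-th power $h^n$ into pairwise coprime factors (the $\beta-c_j$ lie in distinct fibres of $\beta$, and each is coprime to $w$ since the zeros of $w$ are zeros of $\beta$); hence each $(\beta-c_j)^{\nu_j}$ is itself a perfect $n$-th power. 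To conclude $n\mid\nu_j$ it now suffices to show that $\beta-c$ is not a perfect $e$-th power for any $c\ne0$ and $e\ge2$. Suppose $\beta-c=v^e$; then $v$ is nonconstant and coprime to $Xw$ (the zeros of $v$ are zeros of $\beta-c$, hence not zeros of $\beta=X^kw^n$), and differentiating $\beta=v^e+c$ gives $ev^{e-1}v'=X^{k-1}w^{n-1}(kw+nXw')$. Thus $v^{e-1}\mid(kw+nXw')$, so $(e-1)\deg v\le\deg w$; since $\deg v=(k+n\deg w)/e$, this rearranges to $(e-1)k\le(\deg w)\bigl(e-(e-1)n\bigr)$, whose right-hand side is $\le0$ while the left-hand side is $\ge1$ — a contradiction. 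Consequently $n\mid\nu_j$ for each $j$ (otherwise $\beta-c_j$ would be a perfect power of exponent $n/\gcd(n,\nu_j)\ge2$), so $\tilde a=X^{\nu_0}\hat h(X)^n$ for some $\hat h\in\C[X]$, i.e.\ $a=(X^{\nu_0}\hat h^n)\circ\ell$. With $j:=\nu_0$ and $\tilde h:=w$ this is the desired statement.

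I expect the step determining $a$ to be the main obstacle, and within it the fact that $\beta-c$ is never a proper perfect power for $c\ne0$: without it one extracts only that each $(\beta-c_j)^{\nu_j}$ is an $n$-th power, which is strictly weaker than $n\mid\nu_j$. The step determining $b$, by contrast, is short once one looks at the fibre of $b$ over $0$. The only other thing to watch is the bookkeeping of leading coefficients, which is why it is convenient to normalize to the monic case at the outset.
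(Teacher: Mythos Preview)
Your argument is correct. The paper does not actually prove this lemma; it merely records it with a citation to the companion paper \cite{MZ}, so there is no ``paper's own proof'' to compare against. What you have supplied is a self-contained elementary proof, and it goes through.

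A few remarks on the write-up. The reduction to the monic case is not literally achieved by a single substitution $(a,b)\mapsto(a\circ u\iter{-1},u\circ b)$; one also has to absorb a scalar into $h$. This is harmless (constants are units in $\C[X]$, and at the end any stray $n$-th-power scalar folds into $\hat h$ or $\tilde h$), but you may as well say so. In the step determining $a$, the logic ``each $(\beta-c_j)^{\nu_j}$ is an $n$-th power $\Rightarrow$ $n\mid\nu_j$'' deserves one extra line: from the multiplicities one gets that $\beta-c_j$ is a perfect $(n/\gcd(n,\nu_j))$-th power, and then your derivative computation (which is the heart of the matter) rules out any exponent $\ge 2$. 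Finally, in that derivative computation you implicitly use that $kw+nXw'$ has degree exactly $\deg w$; this holds because its leading coefficient is $k+n\deg w\ne 0$, and the inequality also covers the degenerate case $\deg w=0$.

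Since \cite{MZ} proves the lemma in the context of a general structural analysis of polynomial decompositions, your direct fibre-by-fibre argument is presumably more elementary than what one finds there; in any case it is entirely adequate for the purposes of the present paper.
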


The following result presents situations where the shape of a polynomial
is determined by the shape of one of its iterates.

\begin{lemma} \label{handlepowers}
Pick $f,\ell,\hat\ell\in\C[X]$ with $r:=\deg(f)>1$ and
$\ell,\hat\ell$ linear, and pick $n\in\Z_{>1}$.
\begin{enumerate}
\item[(\thethm.1)] If $f\iter{n}=\ell\circ X^{r^n}\circ\hat\ell$,
then $f=\ell\circ \alpha X^r\circ\ell\iter{-1}$ for some $\alpha\in\C^*$.
\item[(\thethm.2)] If $f\iter{n}=\ell\circ T_{r^n}\circ\hat\ell$ and
$\{r,n\}\ne\{2\}$, then $f=\ell\circ T_r\circ\epsilon \ell\iter{-1}$ for some
$\epsilon\in\{1,-1\}$.
\end{enumerate}
\end{lemma}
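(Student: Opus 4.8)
\emph{Plan.} I would prove (1) and (2) in parallel, writing $P$ for $X^r$ in part~(1) and for $T_r$ in part~(2), and more generally $P_m$ for $X^m$ or $T_m$, so that the hypothesis reads $\ell\iter{-1}\circ f\iter{n}\circ\hat\ell\iter{-1}=P_{r^n}$. The engine is the pair of factorizations $f\iter{n}=f\circ f\iter{n-1}$ and $f\iter{n}=f\iter{n-1}\circ f$, fed into Lemma~\ref{chebdec} and then Lemma~\ref{chebeq}.

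First apply Lemma~\ref{chebdec} to $P_{r^n}=(\ell\iter{-1}\circ f)\circ(f\iter{n-1}\circ\hat\ell\iter{-1})$; since the first factor has degree $r$, this produces a linear $\mu$ with $\ell\iter{-1}\circ f=P_r\circ\mu$ and $f\iter{n-1}\circ\hat\ell\iter{-1}=\mu\iter{-1}\circ P_{r^{n-1}}$, i.e.\ $f=\ell\circ P_r\circ\mu$ and $f\iter{n-1}=\mu\iter{-1}\circ P_{r^{n-1}}\circ\hat\ell$. Applying Lemma~\ref{chebdec} instead to $P_{r^n}=(\ell\iter{-1}\circ f\iter{n-1})\circ(f\circ\hat\ell\iter{-1})$, whose first factor has degree $r^{n-1}$, produces a linear $\lambda$ with $f\iter{n-1}=\ell\circ P_{r^{n-1}}\circ\lambda$. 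Equating the two expressions for $f\iter{n-1}$ and cancelling the outer linears gives the functional equation $(\mu\circ\ell)\circ P_{r^{n-1}}\circ(\lambda\circ\hat\ell\iter{-1})=P_{r^{n-1}}$.

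Now the degree hypotheses are consumed exactly. In part~(1) one has $r^{n-1}>1$ (because $n>1$), so Lemma~\ref{chebeq}, part~(\ref{chebeq}.1), forces $\mu\circ\ell=\alpha_0 X$ for some $\alpha_0\in\C^*$. In part~(2) the hypothesis $\{r,n\}\neq\{2\}$ is, given $r\ge2$ and $n\ge2$, equivalent to $r^{n-1}>2$, so Lemma~\ref{chebeq}, part~(\ref{chebeq}.3), forces $\mu\circ\ell=cX$ with $c\in\{1,-1\}$. In either case $\mu=(cX)\circ\ell\iter{-1}$ with $c\in\C^*$; substituting into $f=\ell\circ P_r\circ\mu$ and using $X^r\circ(cX)=c^rX^r$ in part~(1) gives $f=\ell\circ\alpha X^r\circ\ell\iter{-1}$ with $\alpha=c^r\in\C^*$, while in part~(2) the identity $f=\ell\circ T_r\circ(cX)\circ\ell\iter{-1}$ is already in the claimed form $f=\ell\circ T_r\circ\epsilon\ell\iter{-1}$ with $\epsilon=c\in\{1,-1\}$.

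There is no genuinely hard step: the whole proof is formal once the structure above is set up. The one thing needing care is the degree bookkeeping in the two applications of Lemma~\ref{chebdec} — one must split off the degree-$r$ copy of $P_r$ on the left of $f\circ f\iter{n-1}$, but the degree-$r^{n-1}$ factor on the left of $f\iter{n-1}\circ f$ — together with the check that the resulting instance of Lemma~\ref{chebeq} is nondegenerate ($r^{n-1}>1$, respectively $r^{n-1}>2$), which is precisely where the hypotheses $n>1$ and $\{r,n\}\neq\{2\}$ enter and is exactly the boundary of validity. (For part~(1) there is also a quick alternative: $\ell\circ X^{r^n}\circ\hat\ell$ has a single finite critical point $w_0$, so writing $(f\iter{n})'=\prod_{i<n}f'(f\iter{i})$ and matching zeros forces $f'=c(X-w_0)^{r-1}$ and $f^{-1}(w_0)=\{w_0\}$, whence $f=\ell\circ\alpha X^r\circ\ell\iter{-1}$; but the uniform argument above seems cleanest.)
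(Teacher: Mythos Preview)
Your argument is correct and is genuinely different from the paper's. The paper proceeds by first using Lemma~\ref{multipledeg} to write $f=\ell\circ P_r\circ\bar\ell$ and $f\iter{2}=\ell\circ P_{r^2}\circ\tilde\ell$, and then applies Lemma~\ref{chebswap} to the identity $f\circ f=f\iter{2}$ to force $\bar\ell\circ\ell$ to be a scalar. This works cleanly for $X^r$ and for $T_r$ with $r>2$, but Lemma~\ref{chebswap} for Chebyshev polynomials requires both degrees to exceed~$2$, so the case $r=2$, $n>2$ is handled by a separate and noticeably more delicate computation involving $f\iter{3}$. Your approach—extracting two expressions for $f\iter{n-1}$ from the factorizations $f\circ f\iter{n-1}$ and $f\iter{n-1}\circ f$ via Lemma~\ref{chebdec}, and then invoking Lemma~\ref{chebeq} on $P_{r^{n-1}}$—is more uniform: the single condition $r^{n-1}>2$ is exactly the hypothesis $\{r,n\}\ne\{2\}$, so no case split is needed. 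The price is that you must appeal to Lemma~\ref{chebdec} rather than the weaker Lemma~\ref{multipledeg}, but since Lemma~\ref{chebdec} is available anyway this is no real cost; your route is shorter and, as you note, makes the boundary of validity transparent.
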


\begin{proof}
If $f\iter{n}=\ell\circ X^{r^n}\circ\hat\ell$, then
$f=\ell\circ X^r\circ\bar\ell$
for some linear $\bar\ell$ (by Lemma~\ref{multipledeg}).
Likewise $f\iter{2}=\ell\circ X^{r^2}\circ \tilde\ell$, so
Lemma~\ref{chebswap} implies that $\bar\ell\circ\ell=\beta X$
for some $\beta\in\C^*$.  Hence $f=\ell\circ X^r\circ\beta \ell\iter{-1}$.

Henceforth suppose $f\iter{n}=\ell\circ T_{r^n}\circ\hat\ell$ and $n>1$.
As above, $f=\ell\circ T_r\circ\bar\ell$
and $f\iter{2}=\ell\circ T_{r^2}\circ\tilde\ell$, so if $r>2$ then
Lemma~\ref{chebswap} implies $\bar\ell\circ\ell=\epsilon X$ for some
$\epsilon\in\{1,-1\}$, whence $f=\ell\circ T_r\circ\epsilon \ell\iter{-1}$.  

Now assume $r=2$ and $n>2$.  Then $f=\ell\circ T_2\circ\bar\ell$ and
$f\iter{3}=\ell\circ T_8\circ\tilde\ell$.
Writing $\ell\iter{-1}\circ f\iter{3}=(T_2\circ\bar\ell\circ\ell)\circ
(T_2\circ\bar\ell\circ\ell)\circ (T_2\circ\bar\ell) = T_2\circ T_2\circ
(T_2\circ\tilde\ell)$, Lemma~\ref{multipledeg} implies there are linears
$\mu,\lambda\in\C[X]$ such that
$T_2\circ\bar\ell=\lambda\iter{-1}\circ T_2\circ\tilde\ell$ and
$T_2\circ\bar\ell\circ\ell = \mu\iter{-1}\circ T_2\circ\lambda$ and
$T_2\circ\bar\ell\circ\ell = T_2\circ\mu$.
Since $T_2=(X-2)\circ X^2$, by Lemma~\ref{chebeq} the equality
$T_2\circ\mu=\mu\iter{-1}\circ T_2\circ\lambda$ implies
that $\mu\circ\lambda\iter{-1}=\beta X$ and
$\mu=-2+(X+2)/\beta^2$ for some $\beta\in\C^*$.
Likewise, from $\lambda\circ T_2\circ\bar\ell\circ\tilde\ell\iter{-1}=T_2$
we get $\lambda=-2+(X+2)/\alpha^2$ for some $\alpha\in\C^*$; but also
$\lambda=\beta^{-1}\mu$, so since $\lambda$ and $\mu$ fix $-2$, it follows
that $\beta=1$.  Thus $\mu=X$, so we have
$T_2\circ\bar\ell\circ\ell=T_2$ and thus $\bar\ell\circ\ell=\epsilon X$
with $\epsilon\in\{1,-1\}$, and the result follows.
\end{proof}

\begin{remark}
The hypothesis $\{r,n\}\ne\{2\}$ is needed in (\ref{handlepowers}.2):
for any linear $\ell$ and any $\alpha\in\C^*\setminus\{1,-1\}$,
the polynomial $f=\ell\circ T_2\circ (-2+\alpha^2(X+2))\circ\ell\iter{-1}$
satisfies $f\iter{2}=
\ell\circ T_4\circ(-2\alpha+\alpha^3(X+2))\circ\ell\iter{-1}$
but $f\ne\ell\circ T_2\circ\pm \ell\iter{-1}$.
\end{remark}

Although it is not used in this paper, for the reader's convenience we
recall Ritt's description of polynomials with a common iterate
\cite[p.~356]{Rittit}:

\begin{prop}[Ritt]
\label{Rittit}
Let $f_1,f_2\in \C[X]$ with
$d_i:=\deg(f_i)>1$ for each $i\in\{1,2\}$.  For $m_1,m_2\in\N$, we have
$f_1\iter{m_1}=f_2\iter{m_2}$ if and only if 
$f_1(X)=-\beta+\epsilon_1 g\iter{n_1}(X+\beta)$ and
$f_2(X)=-\beta+ \epsilon_2 g\iter{n_2}(X+\beta)$ for some $n_1,n_2\in\N$
with $n_1m_1=n_2m_2$, some $g\in X^r \C[X^s]$ (with $r,s\in\N_0$),
and some $\epsilon_1,\epsilon_2,\beta\in \C$ with
$\epsilon_i^s = 1$ and $\epsilon_i^{(d_i^{m_i}-1)/(d_i-1)}=1$ for each
$i\in\{1,2\}$.
\end{prop}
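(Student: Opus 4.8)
The plan is to prove both implications; the reverse (``if'') direction is a direct computation, and the forward direction reduces to Ritt's classification of the polynomials commuting with a fixed polynomial.

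For the reverse direction, write $\ell:=X+\beta$, so that $f_i=\ell\iterp{-1}\circ(\epsilon_i X)\circ g\iter{n_i}\circ\ell$. Since every exponent occurring in $g\in X^r\C[X^s]$ is congruent to $r$ modulo $s$, we have $g\circ(\epsilon X)=(\epsilon^r X)\circ g$ whenever $\epsilon^s=1$, and hence $g\iter{k}\circ(\epsilon X)=(\epsilon^{r^k}X)\circ g\iter{k}$ for all $k$. Pushing all the scalar factors to the left in $f_i\iter{m_i}$ therefore gives $f_i\iter{m_i}=\ell\iterp{-1}\circ(\epsilon_i^{N_i}X)\circ g\iter{m_i n_i}\circ\ell$, where $N_i:=1+r^{n_i}+r^{2n_i}+\dots+r^{(m_i-1)n_i}$. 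Because $r\equiv\deg(g)\pmod s$ (to be read as an equality when $s=0$, in which case $g$ is a scalar multiple of $X^r$ and $d_i=r^{n_i}$), we get $N_i\equiv 1+d_i+\dots+d_i^{m_i-1}=(d_i^{m_i}-1)/(d_i-1)\pmod s$, so $\epsilon_i^{N_i}=1$ by hypothesis. Thus $f_i\iter{m_i}=\ell\iterp{-1}\circ g\iter{m_i n_i}\circ\ell$, and since $m_1 n_1=m_2 n_2$ the two sides coincide.

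For the forward direction, set $F:=f_1\iter{m_1}=f_2\iter{m_2}$, of degree $D:=d_1^{m_1}=d_2^{m_2}$; each $f_i$ commutes with $F$. Suppose first that some linear conjugate of $F$ equals $X^D$ or $\pm T_D$, and fix such a conjugacy $F=\ell_0\circ\Psi\circ\ell_0\iterp{-1}$ with $\Psi\in\{X^D,T_D,-T_D\}$. Applying Lemma~\ref{handlepowers} to each $f_i$ shows that, with this same $\ell_0$, we have $f_i=\ell_0\circ\chi_i\circ\ell_0\iterp{-1}$ for some $\chi_i$ which is $\alpha_i X^{d_i}$ or $\pm T_{d_i}$ (respectively), except in the finitely many cases with $\{d_i,m_i\}=\{2\}$ and $F$ conjugate to $T_4$, where $f_i$ is merely some quadratic whose square is conjugate to $T_4$. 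Letting $e$ be the largest integer with $d_1$ and $d_2$ both powers of $e$, one checks directly that choosing $g$ to be a suitable scaling-conjugate of $X^e$ or $\pm T_e$ (which still lies in $X^r\C[X^s]$), together with the translation part of $\ell_0$ as $\beta$, puts both $f_1$ and $f_2$ in the required form; the free scalar (and sign) in $g$ absorbs the root-of-unity conditions on the $\epsilon_i$, and the fact that $f_1\iter{m_1}=f_2\iter{m_2}$ guarantees that a single $\beta$ works for both. The exceptional quadratic case is handled by hand: every quadratic becomes, after conjugation by a translation, a polynomial $\gamma X^2+c\in\C[X^2]$, which is already in the required shape with $n_i=1$.

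Now suppose no linear conjugate of $F$ is $X^D$ or $\pm T_D$; then the same holds for $f_1$ and $f_2$, so Theorem~\ref{iterates} applies to both. After conjugating everything by a fixed linear we may assume $F$ is monic, so it has a B\"ottcher coordinate $\phi(X)=X(1+O(1/X))$ with $\phi(F(X))=\phi(X)^D$. Since $f_i$ commutes with $F$ and its leading coefficient $\zeta_i$ satisfies $\zeta_i^{D-1}=1$, the rigidity of the equation $\psi(Z^D)=\psi(Z)^D$ for Laurent series at $\infty$ forces $\phi(f_i(X))=\zeta_i\phi(X)^{d_i}$ exactly, whence $f_i\iter{m_i}=F$ yields $\zeta_i^{(d_i^{m_i}-1)/(d_i-1)}=1$. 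It remains to produce a single polynomial $g$ --- the primitive common ``root'' --- of degree $e$ (the largest common base of $d_1$ and $d_2$), lying in $X^r\C[X^s]$ after a translation, such that $F$ is linearly conjugate to an iterate of $g$ and $f_1,f_2$ are both translates of a scalar times an iterate of $g$; the B\"ottcher picture then pins down the $n_i$ and the $\epsilon_i$ with precisely the stated relations. Producing this $g$ and proving it is essentially unique is exactly Ritt's theorem on commuting polynomials: one obtains it by using Theorem~\ref{iterates} and Corollary~\ref{corit} to force $f_1$ and $f_2$ to be, up to a linear, either iterates of one another or members of one of finitely many explicit families, together with a short analysis of the equal-degree case. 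This last step --- the existence and essential uniqueness of the primitive root, equivalently Ritt's description of the centralizer of $F$ --- is the crux of the argument, and the one place where the full force of the decomposition machinery is needed.
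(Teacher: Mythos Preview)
The paper does not prove this proposition at all: it is stated as Ritt's classical result with a citation to \cite[p.~356]{Rittit}, prefaced by ``Although it is not used in this paper, for the reader's convenience we recall\dots''. So there is no proof in the paper to compare against.

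Your reverse (``if'') direction is a clean, correct computation.

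Your forward direction, however, is not a proof but an outline that defers the essential content. You correctly observe that $f_1$ and $f_2$ both commute with $F:=f_1\iter{m_1}=f_2\iter{m_2}$, and your B\"ottcher-coordinate argument does pin down the root-of-unity conditions on the leading coefficients. But the heart of the matter is the existence of a single polynomial $g\in X^r\C[X^s]$ (after a translation) of which both $f_1$ and $f_2$ are scalar-twisted iterates, and here you write ``Producing this $g$ and proving it is essentially unique is exactly Ritt's theorem on commuting polynomials\dots\ This last step\dots\ is the crux of the argument.'' That is an admission, not an argument: Ritt's description of the centralizer of $F$ is essentially equivalent to the proposition you are trying to prove, so invoking it is circular. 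Moreover, Theorem~\ref{iterates} and Corollary~\ref{corit} as stated only constrain decompositions $r\circ s=f\iter{d}$ of iterates of a \emph{single} polynomial; they do not directly yield that two distinct polynomials $f_1,f_2$ commuting with $F$ share a common primitive root. Bridging that gap requires either reproducing Ritt's original Julia--Fatou/permutable-functions argument or a substantial additional decomposition analysis, neither of which you supply. The special cases ($F$ conjugate to $X^D$ or $\pm T_D$, and the $T_4$ exception) are also dispatched with ``one checks directly'' and ``handled by hand'' rather than actually checked.
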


\section{Commensurable polynomials}
\label{sec commensurate}

In this section we analyze $f,g\in\C[X]$ which are \emph{commensurable}, in the
sense that for every $m\in\N$ there exist $n\in\N$ and $h_1,h_2\in\C[X]$
such that $f\iter{n}=g\iter{m}\circ h_1$ and $g\iter{n}=f\iter{m}\circ h_2$.
Plainly two polynomials with a common iterate are commensurable; we give an
explicit description of all other pairs of commensurable polynomials.
In fact, we need only assume half of the commensurability hypothesis:

\begin{prop}\label{gensec4}
Pick $f,g\in \C[X]$ for which $r:=\deg(f)$ and $s:=\deg(g)$ satisfy
$r,s>1$.
Suppose that, for every $m\in\N$, there exists $n\in\N$
and $h\in \C[X]$ such that $g\iter{n}=f\iter{m}\circ h$.  Then either $f$ and
$g$ have a common iterate, or there is a linear $\ell\in \C[X]$ such that
$(\ell\circ f\circ\ell\iter{-1},\ell\circ g\circ\ell\iter{-1})$ is either
$(\alpha X^r,X^s)$ (with $\alpha\in \C^*$) or
$(T_r\circ\hat\epsilon X,T_s\circ\epsilon X)$ (with $\hat\epsilon,
\epsilon\in\{1,-1\}$).
\end{prop}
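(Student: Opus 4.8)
The plan is to first dispose of the special cases where $f$ is conjugate to $X^r$ or $\pm T_r$, and then apply Theorem \ref{iterates} and Corollary \ref{corit} in the generic case. If $f=\ell\circ X^r\circ\ell\iter{-1}$ or $f=\ell\circ(\pm T_r)\circ\ell\iter{-1}$ for some linear $\ell$, then after conjugating by $\ell$ we may assume $f$ itself is $X^r$ or $\pm T_r$; the hypothesis $g\iter{n}=f\iter{m}\circ h$ with $f\iter{m}$ equal to $X^{r^m}$ (or $\pm T_{r^m}$) forces, via Lemma \ref{multipledeg}, that $g\iter{n}=X^{r^m}\circ t$ (resp.\ $g\iter{n}=\pm T_{r^m}\circ t$) for some $t$, and then Lemmas \ref{decompositions}, \ref{chebdec}, \ref{chebeq} let me pin down $g\iter{n}$, and hence $g$ (using that iterates commute with decomposition structure, e.g.\ via Lemma \ref{handlepowers}), to be of the form $\gamma X^s$ or $T_s\circ\epsilon X$ up to a linear conjugacy compatible with that of $f$. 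This gives the stated conclusion in these cases (possibly after absorbing constants; one must check the monomial normalization $(\alpha X^r, X^s)$ can always be achieved by a further conjugation by a scaling $\beta X$, which only affects the leading coefficient of $f$).

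For the generic case, assume $f$ is \emph{not} conjugate to $X^r$ or $\pm T_r$. Fix a large $m$ and write $g\iter{n}=f\iter{m}\circ h$. The key point is that $h$ is a ``right factor'' of an iterate of $g$: since $\deg(g\iter{n})=s^n$ is a power of $s$ and $\deg(h)=s^n/r^m$, this already forces $r^m\mid s^n$, and letting $m\to\infty$ gives that every prime dividing $r$ divides $s$. More importantly, I want to run the argument inside the semigroup of iterates of $f$. First apply Theorem \ref{iterates} to the decomposition $f\iter{m}\circ h = g\iter{n}$? This is not directly of the form $r\circ s=f\iter{d}$, so instead the plan is: choose $m$ large, and also use the hypothesis with roles reversed is \emph{not} available (we only have one direction). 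So I would instead iterate the given relation. From $g\iter{n}=f\iter{m}\circ h$ we get $g\iter{2n}=g\iter{n}\circ g\iter{n}=f\iter{m}\circ h\circ f\iter{m}\circ h$. Now pick $m'$ with $g\iter{n'}=f\iter{m'}\circ h'$ and $m'$ much larger than $m$; comparing $f\iter{m'}\circ h'=g\iter{n'}=$ (a composition built from $f\iter m$ and $h$) and using Corollary \ref{corit} repeatedly, I can write $h=f\iter{i}\circ R$ up to linear adjustments, i.e.\ $h$ itself, after stripping off iterates of $f$, lies in a fixed finite set modulo linears. Feeding this back shows $g\iter{n}$ lies in $\langle f\rangle$ modulo bounded-degree error, and then a degree count and Ritt's description (Proposition \ref{Rittit}) of the commensurability forces $f$ and $g$ to share an iterate — contradicting nothing, this is one of the two allowed conclusions.

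I expect the main obstacle to be the bookkeeping in the generic case: controlling how the ``finite set $S$'' from Corollary \ref{corit} propagates when one composes many copies of $f\iter{m}$ and $h$, and ensuring the bound $k\le\log_2(n+2)$ in Theorem \ref{iterates} really does keep the ``error terms'' of bounded degree uniformly in $m$. The delicate step is showing that from ``$g\iter{n}$ is built, up to bounded linear error, out of iterates of $f$'' one can conclude that some genuine iterate $g\iter{N}$ equals some genuine iterate $f\iter{M}$; this is where one invokes that a polynomial has, up to linears, only finitely many ``indecomposable factorization patterns'' and then matches leading coefficients and constant terms as in the proof of Proposition \ref{Rittit}. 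A secondary subtlety is that in the non-generic ($X^r$ or $\pm T_r$) cases one must track the conjugating linear carefully so that the \emph{same} $\ell$ works for both $f$ and $g$ simultaneously — this is exactly why the conclusion is stated with a single $\ell$ conjugating the pair, and Lemmas \ref{chebswap} and \ref{handlepowers} are the tools that force the two conjugacies to be compatible.
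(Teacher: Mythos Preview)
Your case split is backwards, and this is not a cosmetic issue: it is the reason the generic case does not go through. The hypothesis $g\iter{n}=f\iter{m}\circ h$ is a decomposition of an iterate of $g$, not of $f$. Corollary~\ref{corit} and Theorem~\ref{iterates} take as input a decomposition of an iterate of the polynomial assumed to be non-special, so the polynomial you must assume to be non-special is $g$, not $f$. Once you assume $g$ is not conjugate to $X^s$ or $\pm T_s$, Corollary~\ref{corit} applies to the right factor $h$ of $g\iter{n}=f\iter{m}\circ h$: if $n$ is chosen minimal for the given $m$, then $h$ cannot be of the form $t\circ g$ (else $g\iter{n-1}=f\iter{m}\circ t$), so $\ell\circ h$ lies in a fixed finite set depending only on $g$, and in particular $\deg(h_m)$ is bounded independently of $m$. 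Your attempt to extract an iterate of $f$ to decompose, by iterating $g\iter{n}=f\iter{m}\circ h$ and comparing with $g\iter{n'}=f\iter{m'}\circ h'$, never actually produces an expression of the form $r\circ s=f\iter{d}$, and the claim that ``$h=f\iter{i}\circ R$ up to linear adjustments'' has no justification.

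You are also missing the key reduction step. From boundedness of $\deg(h_m)$ one finds $m<M$ with $\deg(h_m)=\deg(h_M)$, which forces $s^{N-n}=r^{M-m}$; for $S$ a multiple of $M-m$ the minimal $h_S$ is then linear, so $F:=f\iter{M-m}$ and $G:=g\iter{N-n}$ have equal degree and satisfy $G\iter{c}=F\iter{c}\circ\ell_c$ with $\ell_c$ linear for every $c$. This is exactly the hypothesis of Proposition~\ref{linearprop} (the equal-degree case proved in \cite{lines}), which then yields a common iterate or simultaneous conjugacy to monomials. Your plan to go directly from ``$g\iter{n}$ is built out of iterates of $f$ up to bounded error'' to a common iterate via Proposition~\ref{Rittit} is not a substitute: Ritt's result classifies pairs that \emph{already} share an iterate, it does not produce one. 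Finally, in the special cases the paper assumes $g$ (not $f$) is conjugate to $X^s$ or $\pm T_s$ and then reads off the shape of $f\iter{2}$ or $f\iter{3}$ as a left factor of $X^{s^n}$ or $T_{s^n}$ via Lemma~\ref{chebdec}; your version (assuming $f$ special and trying to constrain $g$) runs into the difficulty that $g$ need not be a left factor of $X^{r^m}$ when $s\nmid r^m$.
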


\begin{remark}
The converse of Proposition~\ref{gensec4} holds if and only if every
prime factor of $r$ is also a factor of $s$.
\end{remark}

Our proof of Proposition~\ref{gensec4} consists of a reduction to the
case $r=s$.  The case $r=s$ of Proposition~\ref{gensec4} was analyzed in
our previous paper \cite{lines}, as one of the main ingredients in
our proof of Theorem~\ref{mainthm} in case $\deg(f)=\deg(g)$. 
The following result is \cite[Prop.\ 3.3]{lines}.

\begin{prop}\label{linearprop}
Let $F,G\in \C[X]$ satisfy
$\deg(F) = \deg(G)=r>1$.  Suppose that, for every $m\in\N$, there is a
linear $\ell_m\in \C[X]$ such that $G\iter{m}=F\iter{m}\circ\ell_m$.
Then either $F$ and $G$ have a common iterate, or there is a linear
$\ell\in\C[X]$ for which $\ell\circ F\circ\ell\iter{-1}=\alpha X^r$ and
$\ell\circ G\circ\ell\iter{-1}=\beta X^r$ with $\alpha,\beta\in\C^*$.
\end{prop}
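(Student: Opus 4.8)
The plan is to extract a rigid relation from the hypothesis, boil it down to a single functional equation, and then split according to whether $F$ is linearly conjugate to a monomial. Taking $m=1$ gives a linear $\ell:=\ell_1$ with $G=F\circ\ell$. Substituting this into the cases $m\ge 2$ and left‑cancelling $F$ by Lemma~\ref{multipledeg} (from $F\circ u=F\circ v$ with $\deg u=\deg v=r^m$ one gets $v=t\circ u$ with $t$ linear, and then $F\circ t=F$), one obtains for each $m$ a linear $\lambda_m=t_m\circ\ell$ — with $t_m$ in the finite deck group $\{t\text{ linear}:F\circ t=F\}$ — and a linear $\mu_m$ (a ratio of consecutive $\ell_j$'s) such that $\lambda_m\circ F\iter{m}=F\iter{m}\circ\mu_m$. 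Since the $t_m$ take finitely many values, pigeonholing yields a single linear $\lambda$ with
\[
\lambda\circ F\iter{m}=F\iter{m}\circ\mu_m \qquad\text{for infinitely many }m.
\]
Comparing leading coefficients gives $\mathrm{lc}(\mu_m)^{r^m}=\mathrm{lc}(\lambda)$; in particular, if $\mathrm{lc}(\lambda)=1$ then some positive power of $\mu_m$ is a translation, and iterating the displayed relation would force $F\iter{m}(X+s)-F\iter{m}(X)$ to be a nonzero constant for some $s\ne0$, which is impossible since this difference has degree $r^m-1\ge1$; so in that case $\lambda=X$, i.e.\ $\ell$ is a deck transformation of $F$, whence $G=F\circ\ell=F$ — a common iterate.

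Second, I would dispose of the two special shapes of $F$. Suppose first that some linear $\ell'$ satisfies $\ell'\circ F\circ\ell'\iter{-1}=\pm T_r$. Since $G=F\circ\ell$, the same $\ell'$ conjugates $G$ to $\pm T_r$ composed with a linear polynomial; applying Lemma~\ref{handlepowers} (its Chebyshev part) to $G$ with $n=2$, or $n=3$ when $r=2$ so that $\{r,n\}\ne\{2\}$, and using Lemma~\ref{parity}, one finds that $\ell'$ in fact conjugates $G$ to $\pm T_r$; then in the second iterate the sign ambiguities cancel and Lemma~\ref{chebeq} (its Chebyshev part, valid since $r^2>2$) forces $F\iter{2}=G\iter{2}$, a common iterate. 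Suppose instead $\ell'\circ F\circ\ell'\iter{-1}=\alpha X^r$; then in those coordinates $G$ becomes $\alpha(cX+d)^r$ for some $c\in\C^*$ and $d\in\C$, and computing $G\iter{2}$ and matching it with $F\iter{2}\circ\ell_2$ — which in these coordinates is $\alpha^{r+1}$ times an $r^2$-th power of a linear polynomial — one finds, on inspecting the unique point where this iterate is totally ramified, that $d=0$; thus $\ell'\circ G\circ\ell'\iter{-1}=\beta X^r$, which is exactly the second alternative of the proposition.

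It remains to treat the case where no linear conjugates $F$ to $X^r$, $T_r$ or $-T_r$, so that Theorem~\ref{iterates} and Corollary~\ref{corit} apply to $F$ with their finite exceptional set $S$; by the first paragraph we may also assume $\mathrm{lc}(\lambda)\ne1$. If $\mathrm{lc}(\lambda)$ is not a root of unity, conjugate $\lambda$ into the form $aX$; then each $\mu_m$ has a fixed point $p_m$, one checks from $\lambda\circ F\iter{m}=F\iter{m}\circ\mu_m$ that $F\iter{m}(p_m)=0$, and expanding $F\iter{m}$ about $p_m$ shows it is totally ramified there. By the chain rule for ramification indices, $p_m,F(p_m),\dots,F\iter{m-1}(p_m)$ are then all totally ramified points of $F$; since a degree‑$r$ polynomial has at most one finite totally ramified point, these coincide and it is a fixed critical point of $F$, forcing $F$ conjugate to $X^r$ — contrary to assumption. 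The remaining possibility, $\mathrm{lc}(\lambda)$ a nontrivial root of unity, is the crux and the main obstacle: here the relation $\lambda\circ F\iter{m}=F\iter{m}\circ\mu_m$ forces $F\iter{m}$ (for the relevant $m$) to factor as $H_m\circ X^{g_m}\circ(X-p_m)$ for some $H_m\in\C[X]$, and one must feed these factorizations into Corollary~\ref{corit} — using that $S$ contains polynomials of only finitely many degrees to confine the base points $p_m$ to a finite set — and then run a further pigeonhole together with a ramification‑and‑periodicity bookkeeping along the critical orbit of that base point, in order to force the family $\{\ell_m\}$ to be finite. Once $\ell_m=\ell_{m'}$ with $m<m'$, combining $G\iter{m'}=F\iter{m'}\circ\ell_{m}$ with $G\iter{m'}=G\iter{m'-m}\circ F\iter{m}\circ\ell_m$ and right‑cancelling the nonconstant polynomial $F\iter{m}\circ\ell_m$ yields $F\iter{m'-m}=G\iter{m'-m}$, a common iterate. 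The real work, and the one place where the full strength of Theorem~\ref{iterates} is essential, is controlling the symmetry data of the iterates $F\iter{m}$ in this last subcase.
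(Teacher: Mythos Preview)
The paper does not prove this proposition; it is quoted as \cite[Prop.~3.3]{lines}, so there is no in-paper argument to compare against directly.

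On its own merits: your reduction via Lemma~\ref{multipledeg} to a relation $\lambda\circ F\iter{m}=F\iter{m}\circ\mu_m$ for a single $\lambda$ and infinitely many $m$ is correct, as are the case $\mathrm{lc}(\lambda)=1$, the two special shapes of $F$ (for the Chebyshev case note that when $r$ is even $-T_r$ is conjugate by $-X$ to $T_r$, so one may arrange $\epsilon_1=1$ before invoking Lemma~\ref{handlepowers}), and the subcase where $\mathrm{lc}(\lambda)$ is not a root of unity.

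The gap is the last subcase: $\mathrm{lc}(\lambda)$ a primitive $N$th root of unity with $N>1$, and $F$ not conjugate to $X^r$ or $\pm T_r$. Two issues. First, the factorization $F\iter{m}=H_m\circ (X-p_m)^{g_m}$ you extract has $g_m=\gcd(r^m,d_m)$ with $d_m$ the order of $\mathrm{lc}(\mu_m)$, and $g_m$ can equal~$1$ (for instance whenever $\gcd(N,r)=1$ and $d_m=N$); then the factorization is vacuous and Corollary~\ref{corit} yields nothing. Second, even when $g_m\ge 2$ and Corollary~\ref{corit} confines $p_m$ to a finite set, you have only located the \emph{fixed point} of $\mu_m$; its leading coefficient $\omega_m$ is constrained only by $\omega_m^{r^m}=\zeta$, which permits unboundedly many values as $m$ grows. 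Since $\ell_m=\mu_m\circ\mu_{m-1}\circ\cdots\circ\mu_2\circ\ell$ involves \emph{all} the $\mu_k$ (including those with $k$ outside your pigeonholed set), controlling the fixed points of some $\mu_k$'s does not bound the $\ell_m$'s. The phrase ``ramification-and-periodicity bookkeeping along the critical orbit'' is not an argument, and you yourself flag this step as ``the crux and the main obstacle''. As written, the proposal is incomplete exactly where the proposition has content.
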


By Lemma~\ref{multipledeg}, this implies the case $r=s$ of
Proposition~\ref{gensec4}.  Note that Chebychev polynomials are given
special mention in the conclusion of Proposition~\ref{gensec4}, but not in
the conclusion of Proposition~\ref{linearprop}; this is because $T_r(X)$ and
$T_r(-X)$ have the same second iterate.

\begin{proof}[Proof of Proposition~\ref{gensec4}.]
First assume that $\ell\circ g\circ\ell\iter{-1}=X^s$ for some
linear $\ell\in\C[X]$.  Then $g\iter{n}=f\iter{2}\circ h$
becomes $\ell\iter{-1}\circ X^{s^n}\circ\ell=f\iter{2}\circ h$,
so Lemma~\ref{chebdec} implies $f\iter{2}=\ell\iter{-1}\circ X^{r^2}\circ
\hat\ell$ for some linear $\hat\ell\in\C[X]$.  Now Lemma~\ref{handlepowers}
implies $f=\ell\iter{-1}\circ\alpha X^r\circ\ell$ for some
$\alpha\in\C^*$, so the result holds in this case.

Next assume that $\ell\circ g\circ\ell\iter{-1}=T_s\circ\epsilon X$ for some
linear $\ell\in\C[X]$ and some $\epsilon\in\{1,-1\}$.  Then we can use
the fact that $T_s(-X)=(-1)^s T_s(X)$ 
to rewrite $g\iter{n}=f\iter{3}\circ h$ as
$\ell\iter{-1}\circ T_{s^n}\circ\epsilon^n\ell=f\iter{3}\circ h$.  As
above, Lemma~\ref{chebdec} implies that $f\iter{3}=\ell\iter{-1}\circ T_{r^3}
\circ \hat\ell$ for some linear $\hat\ell\in\C[X]$.  Then
Lemma~\ref{handlepowers} implies $f=\ell\iter{-1}\circ T_r\circ
\hat\epsilon \ell$ with $\hat\epsilon\in\{1,-1\}$,
so the result holds in this case.

Henceforth assume there is no linear $\ell\in\C[X]$ for which
$\ell\circ g\circ\ell\iter{-1}$ is either $X^s$ or $T_s$ or $T_s(-X)$.
For $m\in\N$, let $n\in\N$ be minimal for which $g\iter{n}=f\iter{m}\circ h$
with $h\in\C[X]$, and let $h_m\in\C[X]$ satisfy
$g\iter{n}=f\iter{m}\circ h_m$.
Minimality of $n$ implies there is no
$t\in\C[X]$ with $h_m=t\circ g$, so by Corollary~\ref{corit} there is
a bound on $\deg(h_m)$ depending only on $g$.  In particular, this implies
there are distinct $m,M\in\N$ for which $\deg(h_m)=\deg(h_M)$.
Assuming $m<M$ and equating degrees in the identities
$g\iter{n}=f\iter{m}\circ h_m$ and $g\iter{N}=f\iter{M}\circ h_M$,
it follows that $\deg(g)^{N-n}=\deg(f)^{M-m}$.

Let $S=c(M-m)$ with $c\in\N$, and write $g\iter{R}=f\iter{S}\circ h_S$.
Since $h_S\ne t\circ g$ for every $t\in\C[X]$,
Lemma~\ref{multipledeg} implies $\deg(g)\nmid\deg(h_S)$, so we must
have $R=c(N-n)$ and $\deg(h_S)=1$.  Thus, $F:=f\iterp{M-m}$ and
$G:=g\iterp{N-n}$ satisfy the hypotheses of Proposition~\ref{linearprop},
so either $F$ and $G$ have a common iterate (so $f$ and $g$ do as well),
or there is a linear $\ell\in\C[X]$ for which
$\ell\circ G\circ\ell\iter{-1}=\beta X^{\deg(G)}$ (with $\beta\in\C^*$).
In the latter case, Lemma~\ref{handlepowers} implies there is a linear
$\hat\ell\in\C[X]$ such that
$\hat\ell\circ g\circ\hat\ell\iter{-1}=X^s$, contradicting our
assumption on the form of $g$.
\end{proof}


\section{Non-commensurable polynomials}
\label{sec noncommensurate}

In this section we classify the non-commensurable pairs of polynomials $(f,g)$
for which each polynomial $f\iter{n}(X)-g\iter{n}(Y)$ has a Siegel factor
(in the sense of Definition~\ref{defexc}).

\begin{prop}\label{gensec3}
Pick $f,g \in \C[X]$ for which $r:=\deg(f)$ and $s:=\deg(g)$ satisfy $r,s>1$.
Assume there exists $m\in\N$ with these properties:
\begin{enumerate}
\item[(\thethm.1)]  $g\iter{n}\ne f\iter{m}\circ h$ for every $h\in\C[X]$ and
  $n\in\N$; and
\item[(\thethm.2)] there are infinitely many $j\in\N$ for which
$f\iter{mj}(X)-g\iter{mj}(Y)$ has a Siegel factor in $\C[X,Y]$.
\end{enumerate}
Then there is a linear $\ell\in\C[X]$ for which
$(\ell\circ f\circ\ell\iter{-1},\ell\circ g\circ\ell\iter{-1})$
is either $(X^r,\alpha X^s)$ (with $\alpha\in\C^*$) or
$(\epsilon_1 T_r, \epsilon_2 T_s)$ (with $\epsilon_1,\epsilon_2\in\{1,-1\}$).
\end{prop}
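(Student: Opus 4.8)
The plan is to exploit hypothesis (\ref{gensec3}.2) by feeding the Siegel factors into the Bilu--Tichy classification of Corollary~\ref{BTcor}, and then to use the polynomial-decomposition machinery of Section~\ref{sec decomposition} to rigidify the resulting constraints into the stated normal form. First I would fix the special value $m$ from the hypothesis and, for each of the infinitely many admissible $j$, apply Corollary~\ref{BTcor} to the pair $(F,G):=(f\iter{mj},g\iter{mj})$: this yields common $E_j,\mu_j,\nu_j$ with $\deg\mu_j=\deg\nu_j=1$ and a pair $(F_{1,j},G_{1,j})$ (or its reverse) drawn from the five families (\ref{BTcor}.1)--(\ref{BTcor}.5). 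Since there are only five families but infinitely many $j$, some single family occurs for infinitely many $j$; I would pass to that infinite subset of $j$'s and argue family by family. The families (\ref{BTcor}.2) and (\ref{BTcor}.5) have bounded degree on one side, so they can occur only finitely often (as $\deg f\iter{mj}\to\infty$ and $\deg g\iter{mj}\to\infty$), hence are immediately excluded; this leaves the ``monomial'' family (\ref{BTcor}.1) and the two Chebychev families (\ref{BTcor}.3)--(\ref{BTcor}.4).

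Next I would treat the monomial family. Here, for infinitely many $j$, we have (up to swapping $f\leftrightarrow g$) a decomposition $f\iter{mj}=E_j\circ X^{a_j}\circ\mu_j$ and $g\iter{mj}=E_j\circ (X^{r_j}p_j(X)^{a_j})\circ\nu_j$ with $\gcd(r_j,a_j)=1$. Applying Lemma~\ref{decompositions} and Lemma~\ref{multipledeg} to the decomposition $E_j\circ X^{a_j}=f\iter{mj}\circ\mu_j\iter{-1}$ forces $f\iter{mj}$ to be, up to linears on both sides, a polynomial of the form $X^{b_j}h_j(X)^{a_j}$; but iterating, the decomposition structure propagates, and by invoking Theorem~\ref{iterates} (applied to $f$ --- valid unless $f$ is already linearly conjugate to $X^r$ or $\pm T_r$, a case one handles separately and which in fact lands in the desired conclusion) together with Lemma~\ref{handlepowers}(\ref{handlepowers}.1) one concludes $f$ is linearly conjugate to $X^r$. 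A symmetric but more delicate argument using Lemma~\ref{decompositions} on the $g$-side, combined with the commensurability-style reasoning of Section~\ref{sec commensurate}, then pins down $g$ to be linearly conjugate to $\alpha X^s$ \emph{by the same linear} $\ell$; the ``same linear'' point is where one must be careful, using Lemma~\ref{chebswap} / Lemma~\ref{chebeq} to compare the linears appearing at different levels $j$, exactly as in the proof of Lemma~\ref{handlepowers}.

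For the Chebychev families (\ref{BTcor}.3)--(\ref{BTcor}.4): for infinitely many $j$ one gets $f\iter{mj}=E_j\circ T_{a_j}\circ\mu_j$ and $g\iter{mj}=E_j\circ (\pm T_{c_j})\circ\nu_j$ with $a_j,c_j$ the appropriate degrees. Again Lemma~\ref{multipledeg} gives that $f\iter{mj}=E_j\circ T_{a_j}\circ\mu_j$ forces (since $\deg T_{a_j}=a_j\mid\deg f\iter{mj}$) a factorization $T_{a_j}=\bar E_j\circ (\text{something})$; more usefully, applying Lemma~\ref{chebdec} to decompositions of $T_{a_j}$ shows the only decompositions of $T_{a_j}$ are the obvious $T_d\circ T_{a_j/d}$ up to linears, so $f\iter{mj}$ is linearly conjugate to $T_{a_j}=T_{r^{mj}}$, and Lemma~\ref{handlepowers}(\ref{handlepowers}.2) then gives $f$ linearly conjugate to $\pm T_r$ (the excluded case $\{r,mj\}=\{2\}$ does not arise since $mj\to\infty$). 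The same reasoning applies to $g$, and once more one must check the conjugating linears agree: here Lemma~\ref{parity} (that $T_n$ is even or odd) and Lemma~\ref{chebeq}(\ref{chebeq}.3) control the ambiguity, reducing it to signs $\epsilon_1,\epsilon_2\in\{1,-1\}$, which is precisely the stated conclusion. One should also rule out the ``mixed'' outcome where $f$ is monomial-type but $g$ is Chebychev-type (or vice versa): since a fixed family was chosen for infinitely many $j$ and that family determines both $F_{1,j}$ and $G_{1,j}$ simultaneously, $f$ and $g$ land in matching cases automatically.

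The main obstacle I expect is the bookkeeping of \emph{which single linear conjugation} simultaneously normalizes both $f$ and $g$, and doing so using only the \emph{hypothesis for a single $m$}: Corollary~\ref{BTcor} applied at level $mj$ produces linears $\mu_j,\nu_j,E_j$ that a priori vary with $j$, and one must show these stabilize (or their relevant ``directions'' do) so as to produce one $\ell$ working for $f$ and for $g$. This is handled by the rigidity lemmas --- Lemma~\ref{chebeq}, Lemma~\ref{chebswap}, and the argument pattern inside Lemma~\ref{handlepowers} --- applied to the overlap of the decompositions of $f\iter{mj}$ and $f\iter{mj'}$ for two different $j,j'$ in the infinite set, but assembling this cleanly (especially tracking the scalar $\alpha$ in $\alpha X^s$ and the signs $\epsilon_1,\epsilon_2$) is the delicate part. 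A secondary subtlety is the up-to-swap ambiguity in Corollary~\ref{BTcor}: one must argue that the ``$f$ is on the $X^m$ side'' choice can be made consistently for infinitely many $j$ (a pigeonhole on a two-element set), and that hypothesis (\ref{gensec3}.1) --- non-existence of $h$ with $g\iter{n}=f\iter{m}\circ h$ --- is what forbids the degenerate alternative in which the Bilu--Tichy decomposition is trivial on the $g$-side.
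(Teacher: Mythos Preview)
Your approach has a genuine gap: it does not control the common left factor $E_j$ coming from Corollary~\ref{BTcor}. Recall that Corollary~\ref{BTcor} gives $f\iter{mj}=E_j\circ F_{1,j}\circ\mu_j$ and $g\iter{mj}=E_j\circ G_{1,j}\circ\nu_j$, where only $(F_{1,j},G_{1,j})$ is constrained to lie in one of the five families; the polynomial $E_j$ is \emph{arbitrary}. In particular, your exclusion of families (\ref{BTcor}.2) and (\ref{BTcor}.5) is wrong: the fact that $\deg F_{1,j}$ is bounded (e.g.\ equals $2$ in family (\ref{BTcor}.2)) says nothing about $\deg f\iter{mj}$, since the remaining degree sits in $E_j$. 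Likewise, in the monomial family your starting point is $f\iter{mj}=E_j\circ X^{a_j}\circ\mu_j$, and Lemma~\ref{decompositions} does not apply here: that lemma decomposes polynomials of the shape $X^i h(X)^n$, whereas $E_j\circ X^{a_j}$ has no such shape in general. You cannot conclude anything about $f$ from this decomposition without first controlling $E_j$.

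The paper's proof turns precisely on this point. It bifurcates on whether $\{\deg E_j\}$ (there called $\deg A_j$) is bounded. If it is unbounded, then hypothesis~(\ref{gensec3}.1) is used in a sharp way: since $g\iter{mj}=E_j\circ G_{1,j}\circ\nu_j$, one cannot have $E_j=f\iter{m}\circ h$; feeding this into Corollary~\ref{corit} applied to the iterate $(f\iter{m})\iter{j}=E_j\circ(F_{1,j}\circ\mu_j)$ forces $f\iter{m}$ (hence $f$, via Lemma~\ref{handlepowers}) to be linearly conjugate to $X^{r}$ or $\pm T_r$, and a further case analysis pins down $g$ by the same linear. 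If instead $\{\deg E_j\}$ is bounded, then one argues via $\gcd(\deg f,\deg g)$: either this gcd exceeds~$1$, forcing $\gcd(\deg F_{1,j},\deg G_{1,j})\to\infty$ and hence family~(\ref{BTcor}.4), or the gcd is~$1$, forcing $\deg E_j=1$ and hence family~(\ref{BTcor}.1) or~(\ref{BTcor}.3). Your family-by-family analysis is close in spirit to this second branch, but it only becomes valid once $\deg E_j$ is known to be bounded (indeed, equal to~$1$ in the coprime case); you are missing both the bifurcation and the use of (\ref{gensec3}.1) via Corollary~\ref{corit} that handles the unbounded branch.
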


\begin{remark}
The converse of Proposition~\ref{gensec3} holds if and only if some prime
factor of $r$ is not a factor of $s$.
\end{remark}

\begin{remark}
The pair $(\epsilon_1 T_r, \epsilon_2 T_s)$ in the conclusion of
Proposition~\ref{gensec3} differs slightly from the pair
$(T_r\circ\hat\epsilon X,T_s\circ\epsilon X)$ in the conclusion of
Proposition~\ref{gensec4}.  The latter pairs are special cases of the
former pairs, but if $r$ and $s$ are even then $(T_r,-T_s)$ cannot be written
in the latter form (even after conjugation by a linear).
\end{remark}

\begin{proof}[Proof of Proposition~\ref{gensec3}.]
Let $\J$ be the (infinite) set of $j\in\N$ for which
$f\iter{mj}(X)-g\iter{mj}(Y)$ has a Siegel factor in $\C[X,Y]$.
For $j\in\J$, Corollary~\ref{BTcor} implies there are $A_j,B_j,C_j\in \C[X]$
and linear $\mu_j,\nu_j\in \C[X]$ such that $f\iter{mj}=A_j\circ B_j\circ\mu_j$
and $g\iter{mj}=A_j\circ C_j\circ\nu_j$,
where either $(B_j,C_j)$ or $(C_j,B_j)$ has the form of one of
(\ref{BTcor}.1)--(\ref{BTcor}.5).

We split the proof into two cases, depending on whether the degrees of
the polynomials $A_j$ are bounded.
\vskip.1in


\noindent\textit{Case 1: $\{\deg(A_j):j\in\J\}$ is infinite}\newline
In this case there is an infinite subset $\J_0$ of $\J$ such that
$j\mapsto\deg(A_j)$ is a strictly increasing function on $\J_0$.
Replacing $\J$ by $\J_0$, it follows that $\deg(A_j)$ exceeds any
prescribed bound whenever $j\in\J$ is sufficiently large.
By (\ref{gensec3}.1), for $j\in\J$ we cannot have $A_j=f\iter{m}\circ h$ with
$h\in\C[X]$.  Applying Corollary~\ref{corit} to the decomposition
$(f\iter{m})\iter{j}=A_j\circ (B_j\circ\mu_j)$, and recalling that
$\deg(A_j)\to\infty$, it follows that for sufficiently large $j$ we have
either
\begin{equation*}
f\iter{mj} = \ell_j\circ X^{r^{mj}}\circ \ell_j\iter{-1}\quad\text{ or }\quad
f\iter{mj}=\ell_j\circ T_{r^{mj}}\circ \epsilon_j \ell_j\iter{-1},
\end{equation*}
where $\ell_j\in\C[X]$ is linear and $\epsilon_j\in\{1,-1\}$.
Thus, by Lemma~\ref{handlepowers}, either
\begin{align}
\label{fcyc} f&=\ell\iter{-1}\circ X^r\circ\ell\quad\text{ or}\\
\label{fcheb} f&=\ell\iter{-1}\circ T_r\circ\epsilon \ell
\end{align}
for some linear $\ell\in\C[X]$ and some $\epsilon\in\{1,-1\}$.
It remains to determine the shape of $g$.  To this end note that, in the
cases (\ref{fcyc}) and (\ref{fcheb}), respectively, we have
\begin{align*}
f\iter{n}&=\ell\iter{-1}\circ X^{r^n}\circ\ell\quad\text{ and}\\
f\iter{n}&=\ell\iter{-1}\circ T_{r^n}\circ \epsilon^n \ell,
\end{align*}
where in the latter case we have used Lemma~\ref{parity}.
Since $f\iter{mj}=A_j\circ (B_j\circ\mu_j)$, Lemma~\ref{chebdec} implies
that for every $j\in\J$ there is a linear $\hat\ell_j\in\C[X]$ such that
\begin{align}
\label{Ajfcyc} A_j&=\ell\iter{-1}\circ X^{\deg(A_j)}\circ\hat\ell_j\quad
  \text{ if (\ref{fcyc}) holds, and}\\
\label{Ajfcheb} A_j&=\ell\iter{-1}\circ T_{\deg(A_j)}\circ\hat\ell_j \quad
  \text{ if (\ref{fcheb}) holds.}
\end{align}

If $A_j=g\iter{3}\circ h$ for some $j\in\J$ and $h\in\C[X]$, then
by Lemma~\ref{chebdec} there is a linear $\tilde\ell\in\C[X]$ such that
\begin{align*}
g\iter{3}&=\ell\iter{-1}\circ X^{s^3}\circ\tilde\ell\quad
  \text{ if (\ref{Ajfcyc}) holds, and}\\
g\iter{3}&=\ell\iter{-1}\circ T_{s^3}\circ\tilde\ell\quad
  \text{ if (\ref{Ajfcheb}) holds.}
\end{align*}
By Lemma~\ref{handlepowers}, there are $\alpha\in\C^*$ and
$\hat\epsilon\in\{1,-1\}$ such that
\begin{align*}
g&=\ell\iter{-1}\circ \alpha X^s\circ\ell\quad
  \text{ if (\ref{fcyc}) holds, and}\\
g&=\ell\iter{-1}\circ T_s\circ\hat\epsilon\ell\quad
  \text{ if (\ref{fcheb}) holds.}
\end{align*}
This completes the proof in case $A_j=g\iter{3}\circ h$.

Now suppose that $A_j\ne g\iter{3}\circ h$ for every $j\in\J$ and
$h\in\C[X]$.  Since
$(g\iter{3})\iter{mj} = g\iter{3mj}=A_j\circ (C_j\circ\nu_j\circ g\iter{2mj})$,
and moreover $\deg(A_j)\to\infty$ as $j\to\infty$, Corollary~\ref{corit}
implies that either
\begin{align*}
g\iter{3} &= \tilde\ell\circ X^{s^3}\circ \tilde\ell\iter{-1}\quad\text{ or}\\
g\iter{3} &= \tilde\ell\circ T_{s^3}\circ\tilde\epsilon \tilde\ell\iter{-1},
\end{align*}
where $\tilde\ell\in\C[X]$ is linear and $\tilde\epsilon\in\{1,-1\}$.
By Lemma~\ref{handlepowers}, either
\begin{align}
\label{gcyc} g &= \tilde\ell\circ \beta X^s\circ\tilde\ell\iter{-1}\quad\text{ or}\\
\label{gcheb} g &= \tilde\ell\circ T_s\circ\hat\epsilon\tilde\ell\iter{-1},
\end{align}
where $\beta\in\C^*$ and $\hat\epsilon\in\{1,-1\}$.  Thus, for $n\in\N$, we have
\begin{align*}
g\iter{n} &= \tilde\ell \circ \beta^{1+s+\dots+s^{n-1}} X^{s^n}\circ
  \tilde\ell\iter{-1}\quad\text{ if (\ref{gcyc}) holds, and}\\
g\iter{n} &= \tilde\ell \circ T_{s^n}\circ\hat\epsilon^n\tilde\ell\iter{-1}\quad
  \text{ if (\ref{gcheb}) holds.}
\end{align*}
Applying Lemma~\ref{chebdec} to the decomposition
$g\iter{mj}=A_j\circ (C_j\circ\nu_j)$, we see that there is
a linear $\tilde\ell_j\in\C[X]$ such that
\begin{align}
\label{Ajgcyc} A_j &= \tilde\ell\circ X^{\deg(A_j)}\circ\tilde\ell_j
  \quad\text{ if (\ref{gcyc}) holds, and}\\
\label{Ajgcheb} A_j &= \tilde\ell\circ T_{\deg(A_j)}\circ\tilde\ell_j
\quad\text{ if (\ref{gcheb}) holds.}
\end{align}
Choose $j\in\J$ for which $\deg(A_j)>2$.  

If (\ref{fcheb}) holds then so does (\ref{Ajfcheb}), so Lemma~\ref{chebeq}
implies (\ref{Ajgcyc}) does not hold,  whence (\ref{Ajgcheb}) and
(\ref{gcheb}) hold; Lemma~\ref{chebeq} implies further that
$\tilde\ell=\ell\iter{-1}\circ\delta X$ for some
$\delta\in\{1,-1\}$.  But then
\begin{align*}
g &= \ell\iter{-1}\circ\delta T_s\circ \hat\epsilon\delta\ell \\
  &= \ell\iter{-1}\circ\delta^{1+s}\hat\epsilon^s T_s\circ\ell,
\end{align*}
which completes the proof in this case.

Finally, if (\ref{fcyc}) holds then so does (\ref{Ajfcyc}), so
Lemma~\ref{chebeq} implies (\ref{Ajgcheb}) does not hold, whence
(\ref{Ajgcyc}) and (\ref{gcyc}) hold; moreover,
$\tilde\ell=\ell\iter{-1}\circ\gamma X$ for some $\gamma\in\C^*$.  But then
\begin{align*}
g &= \ell\iter{-1}\circ \gamma\beta X^s\circ\gamma^{-1}\ell \\
  &= \ell\iter{-1}\circ \gamma^{1-s}\beta X^s\circ\ell,
\end{align*}
which completes the proof in Case 1.
\vskip.2in


\noindent\textit{Case 2: $\{\deg(A_j):j\in\J\}$ is finite.}\newline
Suppose first that $e:=\gcd(\deg(f),\deg(g))$ satisfies $e>1$.
In this case, $\gcd(\deg(f\iter{mj}),\deg(g\iter{mj})) = e^{mj}\to\infty$
as $j\to\infty$, and since $\deg(A_j)$ is bounded it follows that
$\gcd(\deg(B_j),\deg(C_j))\to\infty$.
For any nonconstant $F,G\in\C[X]$ such that $(F,G)$ has any of the forms
(\ref{BTcor}.1)--(\ref{BTcor}.5) other than (\ref{BTcor}.4), we observe that
$\gcd(\deg(F),\deg(G))\le 2$; thus, for every sufficiently large $j\in\J$,
either $(B_j,C_j)$ or $(C_j,B_j)$ has the form (\ref{BTcor}.4).
For any such $j$, after perhaps replacing $(A_j,B_j,C_j)$ by
$(A_j(-X),-B_j,-C_j)$, we find that
$B_j=T_{\deg(B_j)}$ and $C_j=-T_{\deg(C_j)}$.  Since
$f\iter{mj}=A_j\circ T_{\deg(B_j)}\circ\mu_j$ and $\deg(A_j)$ is bounded,
for sufficiently large $j\in\J$ we must have $r^3\mid\deg(B_j)$; applying
Lemma~\ref{multipledeg} to the decomposition
$f\iter{mj-3}\circ f\iter{3} = (A_j\circ T_{\deg(B_j)/r^3})\circ
 (T_{r^3}\circ\mu_j)$
gives $f\iter{3}=\ell_j\circ T_{r^3}\circ\mu_j$
with $\ell_j\in\C[X]$ linear.  Lemma~\ref{handlepowers}
implies $f=\ell_j\circ T_r\circ\epsilon \ell_j\iter{-1}$ with
$\epsilon\in\{1,-1\}$; then
$\ell_j\circ T_{r^3}\circ\mu_j=f\iter{3}=\ell_j\circ T_{r^3}\circ
  \epsilon \ell_j\iter{-1}$,
so Lemma~\ref{chebeq} implies $\mu_j=\delta\epsilon\ell_j\iter{-1}$ for some
$\delta\in\{1,-1\}$ with $\delta^r=1$.  But then
$A_j\circ T_{\deg(B_j)}\circ\mu_j=f\iter{mj}=\mu_j\iter{-1}\circ
  \delta\epsilon T_{r^{mj}}\circ\delta\epsilon^{mj+1}\mu_j$, so
Lemma~\ref{chebdec} implies there is a linear $\tilde\ell\in\C[X]$ such that
$A_j\circ\tilde\ell=\mu_j\iter{-1}\circ\delta\epsilon T_{\deg(A_j)}$ and
$\tilde\ell\iter{-1}\circ T_{\deg(B_j)}\circ\mu_j = T_{\deg(B_j)}\circ
  \delta\epsilon^{mj+1}\mu_j$.
Then $\tilde\ell\in\{X,-X\}$, so $\mu_j \circ A_j = \tilde\epsilon T_{\deg(A_j)}$
with $\tilde\epsilon\in\{1,-1\}$.  The same argument shows that
$\nu_j \circ A_j = \hat\epsilon T_{\deg(A_j)}$ for some $\hat\epsilon\in\{1,-1\}$,
so $\hat\epsilon\nu_j = \tilde\epsilon \mu_j$.  From above,
$f=\mu_j\iter{-1}\circ \epsilon_0 T_r\circ \mu_j$ with $\epsilon_0\in\{1,-1\}$,
and similarly $g=\nu_j\iter{-1}\circ \epsilon_1 T_s\circ\nu_j$ with
$\epsilon_1\in\{1,-1\}$, so $g=\mu_j\iter{-1}\circ\epsilon_2 T_2\circ\mu_j$
with $\epsilon_2\in\{1,-1\}$, and the result follows.

Henceforth suppose that $\gcd(\deg(f),\deg(g))=1$.
In this case, for $j\in\J$ we have $\deg(A_j)=1$ and
$\gcd(\deg(B_j),\deg(C_j))=1$; by examining (\ref{BTcor}.1)--(\ref{BTcor}.5),
we see that one of $(B_j,C_j)$ and $(C_j,B_j)$) must have the
form of either (\ref{BTcor}.1) or (\ref{BTcor}.3).

Suppose there is some $j\in\J$ with $j>2/m$ such that either $(B_j,C_j)$ or
$(C_j,B_j)$ has the form (\ref{BTcor}.3).  For any such $j$ we have
\[
(B_j,C_j)=(T_{\deg(B_j)},T_{\deg(C_j)});
\]
since $A_j$ is linear, this implies
\begin{align*}
f\iter{mj} &= A_j\circ T_{r^{mj}}\circ\mu_j\quad\text{ and}\\
g\iter{mj} &= A_j \circ T_{s^{mj}}\circ\nu_j.
\end{align*}
By Lemma~\ref{handlepowers}, we have
\begin{align*}
f &= A_j \circ T_r\circ\epsilon_j\circ A_j\iter{-1}\quad\text{ and}\\
g &= A_j \circ T_s\circ\bar\epsilon_j\circ A_j\iter{-1}
\end{align*}
for some $\epsilon_j,\bar\epsilon_j\in\{1,-1\}$, so the result holds.

Now suppose that, for every $j\in\J$ with $j>2/m$, either $(B_j,C_j)$ or
$(C_j,B_j)$ has the form (\ref{BTcor}.1).
For any such $j$, we have
\[
\{B_j,\,C_j\} = \{X^n,\, X^i p(X)^n\}
\]
where $p\in\C[X]$ and $i\in\N_0$ satisfy $\gcd(i,n)=1$.
Since $n$ is the degree of either $f\iter{mj}$ or $g\iter{mj}$,
we have $n\in\{r^{mj}, s^{mj}\}$, so $n>1$ and thus $i>0$.
Lemmas~\ref{chebdec} and \ref{decompositions} imply that
\begin{equation}
\label{2ndit}
\{f\iter{2},\,g\iter{2}\} = \{A_j\circ X^{\tilde n}\circ\mu,\,
A_j\circ X^{\tilde i} {\tilde p}(X)^n\circ\nu\}
\end{equation}
where $\tilde{i},\tilde{n}\in\N$ and
$\mu,\nu,\tilde{p}\in\C[X]$ with $\mu,\nu$ linear.
We may assume that $j$ satisfies
\[
\min(r,s)^{mj} > \max(r,s)^2.
\]
Since $n\in\{r^{mj},\, s^{mj}\}$,
it follows that $n>\max(r,s)^2$, so we must have $\tilde{p}\in\C^*$.
Applying Lemma~\ref{handlepowers} to (\ref{2ndit}), we conclude
that
\[
(f,g) = (A_j\circ \hat\alpha X^r\circ A_j\iter{-1},\,
 A_j\circ\hat\beta X^s\circ A_j\iter{-1})
\]
for some $\hat\alpha,\hat\beta\in\C^*$.  Finally, after replacing
$A_j$ by $A_j\circ\gamma X$ for suitable $\gamma\in\C^*$, we may
assume $\hat\alpha=1$, which completes the proof.
\end{proof}


\section{Proof of Theorem~\ref{mainthm}}
\label{sec conclude proof}

In this section we conclude the proof of Theorem~\ref{mainthm}.  Our strategy
is to combine the results of the previous two sections with Siegel's theorem,
in order to reduce to the case that the pair $(f,g)$ has one of the two forms
\begin{align}
\label{cyc} &(X^r,\, \beta X^s),\, \text{ with $\beta\in\C^*$ and
  $r,s\in\Z_{>1}$}; \\
\label{cheb} &(\epsilon_1 T_r,\, \epsilon_2 T_s),\, \text{ with $\epsilon_1,
  \epsilon_2\in\{1,-1\}$ and $r,s\in\Z_{>1}$}.
\end{align}
We then use Corollary~\ref{lang thm} (which is a consequence of Siegel's
theorem) to handle these two possibilities.

\begin{prop}
Pick $f,g\in\C[X]$ for which $r:=\deg(f)$ and $s:=\deg(g)$ satisfy $r,s>1$.
Assume that, for every $n\in\N$, the polynomial $f\iter{n}(X)-g\iter{n}(Y)$
has a Siegel factor in $\C[X,Y]$.
Then either $f$ and $g$ have a common iterate or there is a linear
$\ell\in\C[X]$ such that
$(\ell\circ f\circ\ell\iter{-1},\ell\circ g\circ\ell\iter{-1})$
has one of the forms \eqref{cyc} or \eqref{cheb}.
\end{prop}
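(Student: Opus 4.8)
The plan is to reduce the statement directly to Propositions~\ref{gensec4} and \ref{gensec3} via a dichotomy that matches their hypotheses precisely. The dividing line is whether there exists $m\in\N$ with the property that $g\iter{n}\ne f\iter{m}\circ h$ for every $n\in\N$ and every $h\in\C[X]$.

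First I would handle the case where such an $m$ exists. Here the standing hypothesis --- that $f\iter{n}(X)-g\iter{n}(Y)$ has a Siegel factor for \emph{every} $n\in\N$ --- supplies condition (\ref{gensec3}.2) for free (take $n=mj$ with $j\in\N$), while the case hypothesis is exactly condition (\ref{gensec3}.1). So Proposition~\ref{gensec3} applies and yields a linear $\ell\in\C[X]$ for which $(\ell\circ f\circ\ell\iter{-1},\ell\circ g\circ\ell\iter{-1})$ equals $(X^r,\alpha X^s)$ with $\alpha\in\C^*$ or $(\epsilon_1 T_r,\epsilon_2 T_s)$ with $\epsilon_1,\epsilon_2\in\{1,-1\}$ --- precisely the forms \eqref{cyc} and \eqref{cheb}.

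Next I would handle the complementary case: for every $m\in\N$ there exist $n\in\N$ and $h\in\C[X]$ with $g\iter{n}=f\iter{m}\circ h$. This is verbatim the hypothesis of Proposition~\ref{gensec4}, so either $f$ and $g$ have a common iterate --- in which case we are done --- or some linear $\ell\in\C[X]$ makes $(\ell\circ f\circ\ell\iter{-1},\ell\circ g\circ\ell\iter{-1})$ equal to $(\alpha X^r,X^s)$ with $\alpha\in\C^*$ or $(T_r\circ\hat\epsilon X,T_s\circ\epsilon X)$ with $\hat\epsilon,\epsilon\in\{1,-1\}$. These shapes are stated a little differently from \eqref{cyc} and \eqref{cheb}, so the final step is to reconcile them. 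For the Chebychev shape, Lemma~\ref{parity} gives $T_r\circ\hat\epsilon X=\hat\epsilon^r T_r$ and $T_s\circ\epsilon X=\epsilon^s T_s$, so the pair already has the form \eqref{cheb}. For the monomial shape, since $r>1$ I can pick $\lambda\in\C^*$ with $\lambda^{r-1}=\alpha$ and replace $\ell$ by $\lambda X\circ\ell$; then $(\ell\circ f\circ\ell\iter{-1},\ell\circ g\circ\ell\iter{-1})$ becomes $(X^r,\lambda^{1-s}X^s)$, which has the form \eqref{cyc}.

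I do not expect a real obstacle: all the substance has been absorbed into Propositions~\ref{gensec3} and \ref{gensec4} (and through them into the decomposition results of \cite{MZ} and the $r=s$ case of \cite{lines}). The only things requiring care are the observation that the two propositions' hypotheses exhaust all cases and the small bookkeeping that normalizes the exceptional pairs to the exact forms \eqref{cyc} and \eqref{cheb}.
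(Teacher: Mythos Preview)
Your proposal is correct and is exactly the argument the paper has in mind: the paper's proof is the single line ``This follows from Propositions~\ref{gensec4} and \ref{gensec3},'' and your dichotomy together with the normalization of the exceptional pairs spells out precisely how.
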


\begin{proof}
This follows from Propositions~\ref{gensec4} and \ref{gensec3}.
\end{proof}

\begin{cor}
\label{prop number field case III}
Pick $x,y\in\C$ and nonlinear $f,g\in \C[X]$.
If $\OO_f(x)\cap \OO_g(y)$ is infinite, then either
$f$ and $g$ have a common iterate or there is a
a linear $\ell\in\C[X]$ such that
$(\ell\circ f\circ\ell\iter{-1},\ell\circ g\circ\ell\iter{-1})$
has one of the forms \eqref{cyc} or \eqref{cheb}.
\end{cor}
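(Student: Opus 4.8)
The plan is to deduce this from the Proposition immediately preceding it, by verifying that proposition's hypothesis: that $f\iter{n}(X)-g\iter{n}(Y)$ has a Siegel factor in $\C[X,Y]$ for every $n\in\N$. First observe that if $\OO_f(x)$ were finite then $\OO_f(x)\cap\OO_g(y)$ would be finite; so we may assume $\OO_f(x)$ and $\OO_g(y)$ are both infinite, which forces $f\iter{0}(x),f\iter{1}(x),f\iter{2}(x),\dots$ to be pairwise distinct, and likewise $g\iter{0}(y),g\iter{1}(y),g\iter{2}(y),\dots$ to be pairwise distinct.

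Now fix $n\in\N$. Choose infinitely many distinct $z\in\OO_f(x)\cap\OO_g(y)$. Each such $z$ equals $f\iter{a}(x)=g\iter{b}(y)$ for integers $a,b\ge 0$ uniquely determined by $z$; moreover distinct choices of $z$ yield distinct values of $a$ and distinct values of $b$, so these values are unbounded, and after discarding finitely many of the chosen $z$ we may assume $a\ge n$ and $b\ge n$ throughout. For each remaining $z$, the pair $\bigl(f\iter{a-n}(x),\,g\iter{b-n}(y)\bigr)$ satisfies $f\iter{n}\bigl(f\iter{a-n}(x)\bigr)=z=g\iter{n}\bigl(g\iter{b-n}(y)\bigr)$, and distinct $z$ give distinct first coordinates $f\iter{a-n}(x)$. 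Hence $f\iter{n}(X)-g\iter{n}(Y)$ has infinitely many zeros in $\OO_f(x)\times\OO_g(y)$.

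To convert this into a Siegel factor, let $R$ be the subring of $\C$ generated by $x$, $y$, and the coefficients of $f$ and of $g$. Then $R$ is a finitely generated integral domain of characteristic zero, its fraction field $K$ is a subfield of $\C$, and $\OO_f(x)\cup\OO_g(y)\subseteq R$. Since $f\iter{n}(X)-g\iter{n}(Y)\in K[X,Y]$ has infinitely many zeros in $R\times R$, Corollary~\ref{siegel} produces a Siegel factor of $f\iter{n}(X)-g\iter{n}(Y)$ over $K$; because absolute irreducibility, genus, and number of points at infinity are unaffected by the field extension $K\subseteq\C$, this factor is a Siegel polynomial over $\C$ dividing $f\iter{n}(X)-g\iter{n}(Y)$ in $\C[X,Y]$. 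As $n\in\N$ was arbitrary, the Proposition preceding this Corollary applies and gives the stated conclusion.

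Once Propositions~\ref{gensec4} and~\ref{gensec3} are in hand, this argument is routine, and I do not expect any genuine obstacle. The conceptual point that makes it work is the observation that shifting the two orbits back by $n$ steps converts a point of $\OO_f(x)\cap\OO_g(y)$ into a solution of $f\iter{n}(X)=g\iter{n}(Y)$; the only technical care needed is the descent to a finitely generated ring $R$ together with the standard fact that the Siegel property of a factor survives the enlargement of the base field from $K$ to $\C$. All of the real difficulty in Theorem~\ref{mainthm} sits upstream, in the polynomial-decomposition analysis feeding Propositions~\ref{gensec4} and~\ref{gensec3}, rather than in this deduction.
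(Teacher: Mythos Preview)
Your argument is correct and follows the same route as the paper's proof: pass to the finitely generated ring $R$ generated by $x,y$ and the coefficients, observe that $f\iter{n}(X)=g\iter{n}(Y)$ has infinitely many solutions in $R\times R$, invoke Corollary~\ref{siegel}, and then apply the preceding Proposition. You are simply more explicit than the paper about why infinitely many solutions exist (the ``shift back by $n$'' step) and about why the Siegel factor over $K$ remains one over $\C$.
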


\begin{proof}
Let $R$ be the ring generated by $x,y$ and the coefficients of
$f$ and $g$, and let $K$ be the field of fractions of $R$.
Note that both $R$ and $K$ are finitely generated.
Since $\OO_f(x)\cap\OO_g(y)$
is infinite, for each $n\in\N$ the equation $f\iter{n}(X)=g\iter{n}(Y)$ has
infinitely many solutions in $\OO_f(x)\times\OO_g(y)\subseteq R\times R$.
By Siegel's theorem (Corollary~\ref{siegel}), for each $n\in\N$ the polynomial
$f\iter{n}(X)-g\iter{n}(Y)$ has a Siegel factor in $K[X,Y]$.
Now the conclusion follows from the previous result (note that $f$ and $g$
are nonconstant since $\OO_f(x)$ and $\OO_g(y)$ are infinite).
\end{proof}

\begin{proof}[Proof of Theorem~\ref{mainthm}]
By Corollary~\ref{prop number field case III}, it suffices to prove
Theorem~\ref{mainthm} in case there is a linear $\ell\in\C[X]$ for which
$(\tilde f,\tilde g):=(\ell\circ f\circ\ell\iter{-1},\ell\circ g\circ\ell\iter{-1})$
has one of the forms \eqref{cyc} or \eqref{cheb}.  But then
\[
\OO_{\tilde f}(\ell(x)) \cap \OO_{\tilde g}(\ell(y)) =
\ell(\OO_f(x)) \cap \ell(\OO_g(y)) =
\ell(\OO_f(x) \cap \OO_g(y))
\]
is infinite, so Proposition~\ref{special case works} implies that
$\tilde f\iter{i}=\tilde g\iter{j}$ for some $i,j\in\N$, whence $f\iter{i}=g\iter{j}$.
\end{proof}

\begin{prop}
\label{special case works}
Pick $f,g\in\C[X]$ such that $(f,g)$ has one of the forms \eqref{cyc}
or \eqref{cheb}.
If there are $x,y\in\C$ for which $\OO_f(x)\cap\OO_g(y)$ is infinite,
then $f$ and $g$ have a common iterate.
\end{prop}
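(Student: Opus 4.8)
The plan is to treat the two forms \eqref{cyc} and \eqref{cheb} separately, reducing \eqref{cheb} to a statement about monomial maps via the Chebychev semiconjugacy and invoking Corollary~\ref{lang thm} to dispose of \eqref{cyc}. Throughout we may assume $\OO_f(x)$ and $\OO_g(y)$ are both infinite, since otherwise $\OO_f(x)\cap\OO_g(y)$ is finite and there is nothing to prove. First I would reduce to the case $x=y$: if $z_0=f\iter{n_0}(x)=g\iter{m_0}(y)$ lies in $\OO_f(x)\cap\OO_g(y)$, then $\OO_f(z_0)=\{f\iter{n}(x):n\ge n_0\}$ and $\OO_g(z_0)=\{g\iter{m}(y):m\ge m_0\}$ are still infinite, and $\OO_f(z_0)\cap\OO_g(z_0)$ contains every point $f\iter{n}(x)=g\iter{m}(y)$ with $n\ge n_0$ and $m\ge m_0$; since $n\mapsto f\iter{n}(x)$ and $m\mapsto g\iter{m}(y)$ are injective on the (infinite) orbits, this excludes only finitely many points of $\OO_f(x)\cap\OO_g(y)$, so $\OO_f(z_0)\cap\OO_g(z_0)$ is again infinite. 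Replacing $(x,y)$ by $(z_0,z_0)$, we may thus assume $x=y=:z$. Because $\OO_f(z)$ is infinite, in case \eqref{cyc} the point $z$ is neither $0$ nor a root of unity, and in case \eqref{cheb}, writing $z=\psi(w)$ with $w\in\C^*$, where $\psi(W):=W+W^{-1}$ maps $\C^*$ onto $\C$, the point $w$ is not a root of unity (otherwise each $f\iter{n}(z)$ would be a value of $\psi$ at a root of unity, so $\OO_f(z)$ would be finite).

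Consider case \eqref{cyc}, so $f=X^r$ and $g=\beta X^s$ with $\beta\in\C^*$. Then $f\iter{n}(z)=g\iter{m}(z)$ reads $z^{r^n-s^m}=\beta^{c_m}$ where $c_m:=(s^m-1)/(s-1)\in\N$. Every solution $(n,m)\ne(0,0)$ has $n,m\ge1$, since a solution with $n=0$ (resp.\ $m=0$) would make $z$ periodic under $g$ (resp.\ under $f$), contradicting infinitude of the orbits; so there are infinitely many solutions with $n,m\ge1$. Given two such solutions, raising their relations to the powers $c_{m_2}$ and $c_{m_1}$ cancels $\beta$, and since $z$ is not a root of unity the exponent of $z$ must vanish: $(r^{n_1}-s^{m_1})c_{m_2}=(r^{n_2}-s^{m_2})c_{m_1}$. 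Thus $(r^n-s^m)/c_m$ equals a fixed rational $\lambda$, and a short manipulation rewrites this as $(r^n-1)/(s^m-1)=\mu$ for a fixed rational $\mu>0$, for every solution with $n,m\ge1$. Writing $\mu=p/q$ in lowest terms, we get $q r^n-p s^m=q-p$ for all such $(n,m)$. If $p\ne q$, dividing by $q-p$ yields the nondegenerate equation $\frac{q}{q-p}r^n-\frac{p}{q-p}s^m=1$ with $(r^n,s^m)$ ranging over the finitely generated subgroup $\langle(r,1),(1,s)\rangle$ of $\C^*\times\C^*$; by Corollary~\ref{lang thm} this has only finitely many solutions, contradicting the existence of infinitely many distinct pairs $(r^n,s^m)$. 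Hence $p=q=1$, so $r^n=s^m$ for every solution, whence $z^{r^n-s^m}=1$ forces $\beta^{c_m}=1$; picking a solution with $n,m\ge1$ we conclude $(X^r)\iter{n}=X^{r^n}=X^{s^m}=\beta^{c_m}X^{s^m}=(\beta X^s)\iter{m}$, a common iterate.

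Consider case \eqref{cheb}, so $f=\epsilon_1 T_r$ and $g=\epsilon_2 T_s$ with $\epsilon_1,\epsilon_2\in\{1,-1\}$. From $T_d(W+W^{-1})=W^d+W^{-d}$ and $\epsilon_i^{-1}=\epsilon_i$ one checks $(\epsilon_i T_d)\circ\psi=\psi\circ(\epsilon_i X^d)$, hence $f\iter{n}\circ\psi=\psi\circ(\epsilon_1 X^r)\iter{n}$ and $g\iter{m}\circ\psi=\psi\circ(\epsilon_2 X^s)\iter{m}$; with $c'_n:=(r^n-1)/(r-1)$ and $c_m:=(s^m-1)/(s-1)$ this gives $f\iter{n}(z)=\psi(\epsilon_1^{c'_n}w^{r^n})$ and $g\iter{m}(z)=\psi(\epsilon_2^{c_m}w^{s^m})$. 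Since the fibers of $\psi$ are the pairs $\{W,W^{-1}\}$, each equality $f\iter{n}(z)=g\iter{m}(z)$ gives $\epsilon_1^{c'_n}w^{r^n}=(\epsilon_2^{c_m}w^{s^m})^{\pm1}$, and by pigeonhole one sign holds for infinitely many solutions. The sign $-$ would force $w^{r^n+s^m}\in\{1,-1\}$, impossible since $w$ is not a root of unity (so $w^k\in\{1,-1\}$ only for $k=0$); thus the sign $+$ holds infinitely often, forcing $w^{r^n-s^m}\in\{1,-1\}$, hence $r^n=s^m$ and then $\epsilon_1^{c'_n}=\epsilon_2^{c_m}$, for infinitely many — in particular some — $(n,m)$ with $n,m\ge1$. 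For such a solution $(\epsilon_1 X^r)\iter{n}$ and $(\epsilon_2 X^s)\iter{m}$ are the same Laurent monomial, so $f\iter{n}\circ\psi=g\iter{m}\circ\psi$, and since $\psi$ is dominant $f\iter{n}=g\iter{m}$.

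The only substantial step is case \eqref{cyc}: distilling from the many relations $z^{r^n-s^m}=\beta^{c_m}$ a single two-variable unit equation over the finitely generated group $\langle r\rangle\times\langle s\rangle$ so that Corollary~\ref{lang thm} applies, and checking that the one degenerate case $p=q$ of that equation is precisely the one producing a common iterate. The reduction to $x=y$ and the entire Chebychev case are routine once the semiconjugacy $\psi$ is set up.
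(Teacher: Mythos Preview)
Your proof is correct, and it proceeds along somewhat different lines from the paper's argument, so a brief comparison is warranted.

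The paper does not reduce to $x=y$. Instead, for case \eqref{cyc} it introduces $\beta_1$ with $\beta_1^{s-1}=\beta$ and $y_1=\beta_1 y$, rewriting the coincidence equation as $\beta_1 x^{r^m}=y_1^{s^n}$; it then shows that the set $\{(u,v)\in\Z^2:\beta_1 x^u=y_1^v\}$ is a single coset $\{(c+ak,d+bk):k\in\Z\}$, applies Corollary~\ref{lang thm} to the resulting affine relation $(r^m-c)/a=(s^n-d)/b$ to force $c/a=d/b$, and finishes by comparing two solutions. For case \eqref{cheb} the paper lifts via $x_0+x_0^{-1}=x$ and $y_0+y_0^{-1}=y$ to an equation $x_0^{r^m}=\delta y_0^{\epsilon s^n}$ of the same shape as the monomial case, and invokes Corollary~\ref{lang thm} again.

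Your reduction to a common starting point $x=y=z$ is a genuine simplification: it collapses the two unknowns to one, so in the Chebychev case the lifted equation involves only powers of a single $w$ and a sign, and $r^n=s^m$ drops out immediately without any appeal to Corollary~\ref{lang thm}. In the monomial case your elimination of $\beta$ (producing the invariant $\mu=(r^n-1)/(s^m-1)$) is algebraically different from the paper's coset description, but it leads to the same kind of two-term unit equation and the same use of Corollary~\ref{lang thm}. What the paper's approach buys is a uniform template for both cases; what yours buys is a shorter Chebychev case and one fewer invocation of the unit-equation result.
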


\begin{proof}
Assuming $\OO_f(x)\cap\OO_g(y)$ is infinite,
let $M$ be the set of pairs $(m,n)\in\N\times\N$ for which
$f\iter{m}(x)=g\iter{n}(y)$.  Note that any two elements of $M$ have
distinct first coordinates, since if $M$ contains $(m,n_1)$ and $(m,n_2)$
with $n_1\ne n_2$ then $g\iter{n_1}(y)=g\iter{n_2}(y)$ so $\OO_g(y)$ would
be finite.  Likewise, any two elements of $M$ have distinct second
coordinates, so there are elements $(m,n)\in M$ in which
$\min(m,n)$ is arbitrarily large.

Suppose $(f,g)$ has the form \eqref{cyc}.  Since $f\iter{m}(x)=x^{r^m}$
and $\OO_f(x)$ is infinite, $x$ is neither zero nor a root of unity.
We compute
\[
g\iter{n}(y)=\beta^{\frac{s^n-1}{s-1}}y^{s^n};
\]
putting $y_1:=\beta_1 y$ where $\beta_1\in\C^*$ satisfies
$\beta_1^{s-1}=\beta$, it follows that $g\iter{n}(y)=y_1^{s^n}/\beta_1$,
so infinitude of $\OO_g(y)$ implies that $y_1$ is neither zero nor a root
of unity.  A pair $(m,n)\in \N\times\N$ lies in $M$ if and only if
\begin{equation}
\label{extra 1st case pair}
x^{r^m} = \beta^{\frac{s^n-1}{s-1}}y^{s^n},
\end{equation}
or equivalently
\begin{equation}
\label{extra 1st case pair 2}
\beta_1x^{r^m}= y_1^{s^n}.
\end{equation}
Since \eqref{extra 1st case pair 2} holds for two pairs $(m,n)\in M$ which
differ in both coordinates, we have $x^a=y_1^b$ for some nonzero integers $a,b$.
By choosing $a$ to have minimal absolute value, it follows that the set
$S:=\{(u,v)\in\Z^2: \beta_1x^u=y_1^v\}$ has the form
$\{(c+ak,d+bk):k\in\Z\}$ for some $c,d\in\Z$.  For $(m,n)\in M$ we have
$(r^m,s^n)\in S$, so $(r^m-c)/a = (s^n-d)/b$.  Since $M$ is infinite,
Corollary~\ref{lang thm} implies that $c/a=d/b$.  In particular, every
$(m,n)\in M$ satisfies $br^m=as^n$.  Pick two
pairs $(m,n)$ and $(m+m_0,n+n_0)$ in $M$ with $m_0,n_0\in\N$.
Then $r^{m_0}=s^{n_0}$, and $S$ contains both
$(r^m,s^n)$ and $(r^{m+m_0},s^{n+n_0})$, so
\[
y_1^{s^n} x^{-r^m} = \beta_1 = y_1^{s^{n+n_0}} x^{-r^{m+m_0}},
\]
and thus
\[
(y_1^{s^n})^{s^{n_0}-1} = (x^{r^m})^{r^{m_0}-1}.
\]
Since $r^{m_0}=s^{n_0}$, it follows that $\beta_1^{s^{n_0}-1}=1$,
so $f\iter{m_0}=g\iter{n_0}$.

Now suppose $(f,g)$ has the form \eqref{cheb}.  Then (by Lemma~\ref{parity})
for any $m,n\in\N$ there exist $\epsilon_3,\epsilon_4\in\{1,-1\}$ such that
$(f\iter{m},g\iter{n})=(\epsilon_3T_{r^m},\epsilon_4T_{s^n})$.
Since $\OO_{f}(x)\cap\OO_g(y)$ is infinite, we can choose
$\delta\in\{1,-1\}$ such that $T_{r^m}(x)=\delta T_{s^n}(y)$
for infinitely many $(m,n)\in\N\times\N$.  Pick $x_0,y_0\in\C^*$ such that
$x_0+x_0^{-1}=x$ and $y_0+y_0^{-1}=y$.  Then there are infinitely many
pairs $(m,n)\in\N\times\N$ for which
\[
x_0^{r^m} + x_0^{-r^m} = \delta(y_0^{s^n} + y_0^{-s^n}),
\]
so we can choose $\epsilon\in\{1,-1\}$ such that
\begin{equation} \label{new}
x_0^{r^m} = \delta y_0^{\epsilon s^n}
\end{equation}
for infinitely many $(m,n)\in\N\times\N$.
Moreover, since $\OO_f(x)$ and $\OO_g(y)$ are infinite,
neither $x_0$ nor $y_0$ is a root of unity, so distinct pairs
$(m,n)\in\N\times\N$ which satisfy \eqref{new} must differ in both
coordinates.  Now \eqref{new} is a reformulation of
\eqref{extra 1st case pair 2}, so we conclude as above that
$r^{m_0}=s^{n_0}$ for some $m_0,n_0\in\N$ such that
$\delta^{s^{n_0}-1}=1$.  If $s$ is odd, it follows that
$f\iter{2m_0}=g\iter{2n_0}$.  If $s$ is even then we cannot have
$\delta=-1$; since $f\iter{m}=\epsilon_1 T_{r^m}$ and
$g\iter{n}=\epsilon_2 T_{s^n}$, it follows that $\epsilon_1=\epsilon_2$, so
$f\iter{m_0}=g\iter{n_0}$.
\end{proof}

\begin{remark}
If $(f,g)$ has the form \eqref{cyc} or \eqref{cheb}, then $f^n(X)-g^m(Y)$
has a Siegel factor in $\C[X,Y]$ for every $n,m\in\N$ (in fact, $f^n(X)-g^m(Y)$
is the product of irreducible Siegel polynomials).  So the results of the
previous two sections give no information.  To illustrate Theorem~\ref{mainthm}
for such $(f,g)$, consider $(f,g)=(X^2,X^3)$.  In this case, for any $n,m\in\N$,
the equation $f^n(X)=g^m(Y)$ has infinitely many solutions in $\Z\times\Z$.
However, for any $x_0,y_0\in\C$, each such equation has only finitely many
solutions in $\OO_f(x_0)\times\OO_g(y_0)$.
In particular, each such equation has only finitely many solutions
in $\OO_f(2)\times\OO_g(2)=\{(2^{2^a},2^{3^b}):a,b\in\N_0\}$,
but has infinitely many solutions in $2^{\N_0}\times 2^{\N_0}$.
The underlying principle is that orbits are rather thin subsets of $\C$.
\end{remark}


\section{A multivariate generalization}
\label{sec multivariate}

In this section we show that Theorem~\ref{mainthm} implies
Theorem~\ref{manyvariablethm} and Corollary~\ref{manyvariablecor}.

\begin{proof}[Proof of Theorem~\ref{manyvariablethm}.]
We use induction on $d$.  If $d=1$ then $L(\C)=\C$, so $f(L)=L$.
Now assume the result holds for lines in $\C^{d-1}$.

If all points of $L$ take the same value $z_d$ on the last
coordinate, then $L=L_0\times\{z_d\}$ for some line $L_0\subset\C^{d-1}$.
By the inductive hypothesis, there exist nonnegative integers
$m_1,\dots,m_{d-1}$ (not all zero) such that $L_0$ is invariant under
$(f_1\iter{m_1},\dots,f_{d-1}\iter{m_{d-1}})$.  Then $L$ is invariant under
$(f_1\iter{m_1},\dots,f_{d-1}\iter{m_{d-1}},f_d\iter{0})$, as desired.

Henceforth assume that $L$ projects surjectively onto each coordinate.
Then any point of $L$ is uniquely determined by its value at any
prescribed coordinate.  Since $L$ contains infinitely many points on
$\OO_{f_1}(x_1)\times\dots\times\OO_{f_d}(x_d)$,
it follows that $\OO_{f_i}(x_i)$ is infinite for each $i$.
For each $i=2,\dots,d$, let $\pi_i\colon\C^d\to\C^2$ be the projection onto
the first and $i\tth$ coordinates of $\C^d$.  Then $L_i:=\pi_i(L)$ is a line
in $\C^2$ having infinite intersection with $\OO_{f_1}(x_1)\times\OO_{f_i}(x_i)$.
Since $L$ projects surjectively onto each
coordinate, $L_i$ is given by the equation $X_i=\sigma_i(X_1)$ for some
degree-one $\sigma_i\in\C[X]$.  For any $k,\ell\in\N$ such that
\[
(f_1\iter{k}(x_1),f_i\iter{\ell}(x_i))\in L_i,
\] 
we have $(\sigma_i\circ f_1\circ\sigma_i\iter{-1})\iter{k}(\sigma_i(x_1)) =
f_i\iter{\ell}(x_i)$.  Thus, by Theorem~\ref{mainthm} there exist
$m_i,n_i\in\N$ such that
\[
(\sigma_i\circ f_1 \circ\sigma_i\iter{-1})\iter{m_i}=f_i\iter{n_i}.
\]
Let $M_1$ be the least common multiple of all the $m_i$, and for each
$i\ge 2$ define $M_i:=(n_iM_1)/m_i$.  Then
\[
(\sigma_i\circ f_1 \circ\sigma_i\iter{-1})\iter{M_1}=f_i\iter{M_i},
\]
so for any $y_1\in\C$ we have
\[
f_i\iter{M_i}(\sigma_i(y_1)) = \sigma_i\circ f_1\iter{M_1}(y_1).
\]
Since $L$ is defined by the $(d-1)$ equations $X_i=\sigma_i(X_1)$, it follows
that $L$ is invariant under $(f_1\iter{M_1},\dots,f_d\iter{M_d})$.
\end{proof}

\begin{proof}[Proof of Corollary~\ref{manyvariablecor}.]
Arguing inductively as in the above proof, we may assume that the projection
of $L$ onto each coordinate of $\C^d$ is surjective.  Thus each point of $L$
is uniquely determined by its value on any prescribed coordinate.
By Theorem~\ref{manyvariablethm}, $L$ is preserved by
$\rho_1^{m_1}\dots\rho_d^{m_d}$ for some nonnegative integers
$m_1,\dots,m_d$ which are not all zero.  Without loss of generality, assume $m_1>0$.
For each $k$ with $1\le k\le m_1$, let $U_k$ be the set of tuples
$(n_1,\dots,n_d)\in (\N_0)^d$ such that $n_1\equiv k\pmod{m_1}$
and $\rho_1^{n_1}\dots\rho_d^{n_d}(\alpha)$ lies on $L$.
If $U_k$ is
nonempty, pick $(n_1,\dots,n_d)\in U_k$ for which $n_1$ is minimal; then
$U_k$ contains $V_k:=\{(n_1+jm_1,\dots,n_d+jm_d): j\in\N_0\}$,
and the set $Z_k$ of values $\rho_1^{u_1}\dots\rho_d^{u_d}(\alpha)$ for
$(u_1,\dots,u_d)\in U_k$ is the same as the corresponding set for
$(u_1,\dots,u_d)\in V_k$.  Thus $Z_k$ is the orbit of $\alpha$ under
$\langle \rho_1^{m_1}\dots\rho_d^{m_d}\rangle \rho_1^{n_1}\dots\rho_d^{n_d}$,
which is a coset of a cyclic subsemigroup.
\end{proof}


\section{Function field case, second proof}
\label{second proof for function fields 0}

We now turn our attention to the following result.
\begin{thm}
\label{first extension}
Let $K$ be a field of characteristic $0$, let $f,g\in K[X]$ be polynomials of
degree greater than one, and let $x_0,y_0\in K$. Assume there is no linear
$\mu\in \Kbar[X]$ for which $\mu\iter{-1}(x_0),\mu\iter{-1}(y_0)\in\Qbar$
and both $\mu\iter{-1}\circ f\circ \mu$ and
$\mu\iter{-1}\circ g\circ\mu$ are in $\Qbar[X]$.
If $\OO_f(x_0)\cap\OO_g(y_0)$ is
infinite, then $f$ and $g$ have a common iterate.
\end{thm}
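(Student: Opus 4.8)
The plan is to exploit the hypothesis that $(f,g,x_0,y_0)$ cannot be simultaneously conjugated into $\Qbar$ in order to build a nontrivial height inequality, and then use that inequality to force $\deg(f)=\deg(g)$, reducing to the case already handled in \cite{lines}. First I would introduce the canonical heights $\hh_f$ and $\hh_g$ attached to $f$ and $g$ on $\Kbar$ (or rather on the points of $K$ after choosing a suitable set of absolute values; when $K$ is not a number field one works with a finitely generated field and uses a Weil height relative to a model, as in Moriwaki-style height machinery). The key functional properties are $\hh_f(f(z)) = \deg(f)\,\hh_f(z)$ and $\hh_f = h + O(1)$, where $h$ is a Weil height, and likewise for $g$. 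Since $\OO_f(x_0)\cap\OO_g(y_0)$ is infinite, there are infinitely many pairs $(m,n)$ with $f\iter{m}(x_0)=g\iter{n}(y_0)=:z_{m,n}$; applying the two canonical heights to this common value gives
\[
\deg(f)^m\,\hh_f(x_0) = \hh_f(z_{m,n}) = h(z_{m,n}) + O(1) = \hh_g(z_{m,n}) + O(1) = \deg(g)^n\,\hh_g(y_0) + O(1).
\]
If both $\hh_f(x_0)$ and $\hh_g(y_0)$ are nonzero, then comparing growth rates as $\min(m,n)\to\infty$ (using that the pairs $(m,n)\in M$ differ in both coordinates, exactly as in the proof of Proposition~\ref{special case works}) forces $\deg(f)=\deg(g)$, and moreover pins down a relation $m=n+c$; at that point the result from \cite{lines} (the $\deg(f)=\deg(g)$ case of Theorem~\ref{mainthm}) applies directly and yields a common iterate.

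The remaining, and main, obstacle is the degenerate case where one of the canonical heights vanishes, say $\hh_f(x_0)=0$. This means $x_0$ is a preperiodic point of $f$, so $\OO_f(x_0)$ is finite — but that contradicts infinitude of $\OO_f(x_0)\cap\OO_g(y_0)$ unless $\OO_g(y_0)$ is also finite, which is likewise impossible. So in fact $\hh_f(x_0)>0$ automatically; the real subtlety is hidden in the phrase "$\hh_f(z_{m,n})=h(z_{m,n})+O(1)=\hh_g(z_{m,n})+O(1)$": the $O(1)$ here is uniform, but to even have a \emph{height} on $\Kbar$ that behaves this way one needs $f$, $g$, $x_0$, $y_0$ to generate a field over which a good height theory exists. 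The hypothesis that no linear $\mu\in\Kbar[X]$ conjugates all of $f,g,x_0,y_0$ into $\Qbar$ is precisely what guarantees that the relevant transcendental data contributes a strictly positive term, so that the height comparison is not vacuous — concretely, one picks a place $v$ of $K$ (an absolute value coming from a non-algebraic generator) at which $\hh_{f,v}(x_0)$ or the coefficients of $f$ genuinely register, and runs the argument $v$-adically. I expect the bulk of the work to be in setting up this local height carefully: showing that the failure of simultaneous $\Qbar$-conjugacy yields a place at which the two local canonical heights of the common value $z_{m,n}$ cannot agree up to bounded error unless $\deg(f)=\deg(g)$. Once $\deg(f)=\deg(g)$ is established, one invokes \cite{lines} and concludes, handling the mild bookkeeping (as in Proposition~\ref{special case works}) of translating the arithmetic relation between $m$ and $n$ into the statement $f\iter{i}=g\iter{j}$.
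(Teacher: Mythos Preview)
Your overall strategy matches the paper's: canonical heights plus the equal-degree result from \cite{lines}. But several steps are wrong or omit the key ideas.

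First, the height comparison does not force $\deg(f)=\deg(g)$. From $d_1^m H_1 = d_2^n H_2 + O(1)$ for infinitely many $(m,n)$ you can at best hope for a multiplicative relation $d_1^{m_0}=d_2^{n_0}$; for instance $d_1=4$, $d_2=8$ is perfectly compatible. The paper concludes exactly this weaker relation and then applies \cite[Thm.~1.1]{lines} to the iterates $f\iter{m_0}$ and $g\iter{n_0}$, not to $f$ and $g$ themselves. Your claimed additive relation $m=n+c$ is wrong for the same reason.

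Second, you do not explain how to pass from the $O(1)$ estimate to the exact relation $d_1^{m_0}=d_2^{n_0}$. ``Comparing growth rates'' is not enough: boundedness of $d_1^m H_1 - d_2^n H_2$ for infinitely many $(m,n)$ does not by itself force $\log d_1/\log d_2\in\Q$. The paper supplies two ingredients you omit. It works over a function field $F/E$, where canonical heights are \emph{rational} (Lemma~\ref{rational heights}); then the bounded quantity $d_1^m H_1 - d_2^n H_2$ has bounded denominator and hence takes only finitely many values. Some fixed value $\gamma$ therefore occurs infinitely often, placing infinitely many points $(d_1^m,d_2^n)$ of the rank-$2$ group $\langle d_1\rangle\times\langle d_2\rangle\subset\G_m^2$ on the line $H_1 X - H_2 Y=\gamma$. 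Corollary~\ref{lang thm} then forces $\gamma=0$, so $d_1^m H_1=d_2^n H_2$ exactly for infinitely many pairs, and the multiplicative relation between $d_1$ and $d_2$ follows.

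Third, your argument that $\hh_f(x_0)>0$ is incorrect as stated. Over a function field, $\hh_f(x_0)=0$ does \emph{not} imply $x_0$ is $f$-preperiodic: any $x_0$ in the constant field has height zero when $f$ has constant coefficients. This is precisely where the non-$\Qbar$-conjugacy hypothesis enters. The paper handles it via Claims~\ref{key claim 0} and \ref{key claim}: one descends through a tower $K_0\supset\cdots\supset K_s$ of function-field extensions with $K_s$ a number field, and finds a level $K_i/K_{i+1}$ at which (after conjugating by some linear $\mu$) at least one of $(f,x_0)$, $(g,y_0)$ is not isotrivial over $K_{i+1}$. Benedetto's theorem (Lemma~\ref{Benedetto}) then gives $\hh_f(x_0)>0$. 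Your remark about ``a place $v$ at which the data genuinely register'' gestures at this but does not supply the argument.
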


Theorem~\ref{first extension} may be viewed as the `function field' part of our
Theorem~\ref{mainthm}.
We will give an alternate proof of Theorem~\ref{first extension} using the
theory of heights.  In the next two sections we review canonical
heights associated to nonlinear polynomials.  Then in
Section~\ref{second proof for function fields} we will prove
Theorem~\ref{first extension} by reducing it to the case $\deg(f)=\deg(g)$
handled in our previous paper \cite[Thm.\ 1.1]{lines}.  Here we avoid the
intricate arguments about polynomial decomposition used in the first part of
the present paper;
instead our proof relies on a result of Lang,
already used in the proof of Proposition~\ref{special case works}, which is
itself a consequence of Siegel's theorem.


\section{Canonical heights associated to polynomials}
\label{trivial action}

In this section we recall some standard terminology about heights.
First, a \emph{global field} is either a number field or a function field of
transcendence degree $1$ over another field.  Any global field $E$ comes
equipped with a standard set $M_E$ of absolute values $|\cdot|_v$ which
satisfy a product formula

\[
\prod_{v\in M_E} |x|^{N_v}_v = 1\quad\text{ for every $x\in E^*$},
\]
where $N\colon M_E\to\N$ and $N_v:=N(v)$ (cf.\ \cite{Lang_diophantine} for
details).

If $E$ is a global field, the logarithmic Weil height of $x\in\Ebar$
(with respect to $E$) is defined as (see \cite[p.\ $52$]{Lang_diophantine})
\[
  h_E(x) = \frac{1}{[E(x):E]}\cdot \sum_{v\in M_E}
   \sum_{\substack{w|v\\ w\in M_{E(x)}}} \log\max\{|x|^{N_w}_w , 1\}.
\]

\begin{defi}
\label{definition canonical height}
Let $E$ be a global field, let $\phi\in E[X]$ with $\deg(\phi)>1$,
and let $z\in \Ebar$. The
canonical height $\hh_{\phi,E}(z)$ of $z$ with respect to $\phi$ (and $E$) is
\[
\widehat{h}_{\phi,E}(z) :=
 \lim_{k\rightarrow\infty} \frac{h_E(\phi\iter{k}(z))}{\deg(\phi)^k}.
\]
\end{defi}

Call and Silverman \cite[Thm.\ 1.1]{Call-Silverman} proved the
existence of the above limit, using boundedness of
$|h_E(\phi(x)) - (\deg \phi) h_E(x)|$ and a
telescoping sum argument due to Tate.
We will usually write $h(x)$ and $\hh_{\phi}(x)$ rather than
$h_E(x)$ and $\hh_{\phi,E}(x)$; this should not cause confusion.
We will use the following properties of the canonical height.
\begin{prop}\label{can}
Let $E$ be a global field, let $\phi\in E[X]$ be a polynomial of degree
greater than $1$, and let $z\in \Ebar$. Then
\begin{itemize} 
\item[(a)] for each $k\in\N$, we have
$\hh_{\phi}(\phi\iter{k}(z)) = \deg(\phi)^k \cdot \hh_{\phi}(z)$;
\item[(b)] $|h(z) - \hh_{\phi}(z)|$ is bounded by a function which does not
  depend on $z$;
\item[(c)] if $E$ is a number field then $z$ is preperiodic if and only if
 $\hh_{\phi}(z) = 0$.
\end{itemize}
\end{prop}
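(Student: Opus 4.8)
The plan is to establish the three parts in order, relying only on the defining limit together with the standard estimate
\[
|h(\phi(x)) - \deg(\phi)\cdot h(x)| \le C \quad\text{for all } x\in\Ebar,
\]
where $C$ depends only on $\phi$ and $E$; this is precisely the input already invoked above (following Call--Silverman) to guarantee that the limit defining $\hh_\phi$ exists.

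For part~(a) I would simply unwind the definition: $\hh_{\phi}(\phi\iter{k}(z)) = \lim_{n\to\infty} h(\phi\iter{n}(\phi\iter{k}(z)))/\deg(\phi)^n = \lim_{n\to\infty} h(\phi\iter{n+k}(z))/\deg(\phi)^n$, and reindexing with $m=n+k$ this equals $\deg(\phi)^k\lim_{m\to\infty} h(\phi\iter{m}(z))/\deg(\phi)^m = \deg(\phi)^k\hh_{\phi}(z)$.

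For part~(b) I would use the telescoping identity
\[
\frac{h(\phi\iter{n}(z))}{\deg(\phi)^n} - h(z) = \sum_{j=0}^{n-1}\left(\frac{h(\phi\iter{j+1}(z))}{\deg(\phi)^{j+1}} - \frac{h(\phi\iter{j}(z))}{\deg(\phi)^{j}}\right),
\]
in which the $j\tth$ summand has absolute value at most $C/\deg(\phi)^{j+1}$ by the basic estimate applied to $x=\phi\iter{j}(z)$. Letting $n\to\infty$, the series on the right converges geometrically, and its limit is $\hh_{\phi}(z)-h(z)$; hence $|\hh_{\phi}(z)-h(z)| \le C/(\deg(\phi)-1)$, a bound independent of $z$.

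For part~(c), if $z$ is preperiodic then its forward orbit under $\phi$ is finite, so $h(\phi\iter{n}(z))$ takes only finitely many values and is in particular bounded; dividing by $\deg(\phi)^n\to\infty$ gives $\hh_{\phi}(z)=0$. Conversely, suppose $\hh_{\phi}(z)=0$. By part~(a) we have $\hh_{\phi}(\phi\iter{k}(z))=0$ for every $k$, so by part~(b) the heights $h(\phi\iter{k}(z))$ are all bounded by $C/(\deg(\phi)-1)$. Since $\phi\in E[X]$, every iterate $\phi\iter{k}(z)$ lies in the fixed finite extension $E(z)$ of $E$; thus over a number field $E$, Northcott's finiteness theorem shows that the forward orbit of $z$ is finite, i.e.\ $z$ is preperiodic. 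The only external inputs are the basic estimate above (a standard functoriality property of Weil heights) and Northcott's theorem, and neither poses a real obstacle; the content of the proof is the telescoping-sum bookkeeping in part~(b).
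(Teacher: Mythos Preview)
Your proof is correct and is essentially the argument the paper has in mind: the paper's own proof simply says that (a) is clear and cites Call--Silverman for (b) and (c), and the telescoping-sum/Northcott argument you wrote out is exactly the standard proof in that reference (indeed the paper already alludes to Tate's telescoping argument just before the proposition).
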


\begin{proof}
  Part $(a)$ is clear; for $(b)$ see \cite[Thm.\ $1.1$]{Call-Silverman};
  and for $(c)$ see \cite[Cor.\ $1.1.1$]{Call-Silverman}.
\end{proof}
Part $(c)$ of Proposition~\ref{can} is not true if $E$ is a function
field with constant field $E_0$, since $\hh_{\phi}(z)=0$ whenever $z\in E_0$
and $\phi\in E_0[X]$.  But these are essentially the only counterexamples
in the function field case (cf.\ Lemma~\ref{Benedetto}).


\section{Canonical heights in function fields}
\label{Silverman proof}

The setup for this section is as follows: $E$ is a field, and $K$ is a
function field of transcendence degree $1$ over $E$.

First we note that for each place $v\in M_K$ of the function field $K$, we may
assume $\log |z|_v\in \Q$ (we use $c:=e^{-1}$ in the definition of absolute
values on function fields from \cite[p.\ $62$]{Lang_diophantine}).

Let $\phi\in K[X]$ be a polynomial of degree greater than $1$. For each
$v\in M_K$, we let 
\begin{equation}
\label{local canonical height}
\hh_{\phi,v}(z):=
\lim_{n\to\infty}\frac{\log \max\{ |\phi\iter{n}(z)|^{N_v}_v,1\}}{\deg(\phi)^n}
\end{equation}
be the canonical local height of $z\in K$ at $v$. Clearly, for all but
finitely many $v\in M_K$, all coefficients of $\phi$, and $z$ are $v$-adic
integers. Hence, for such $v\in M_K$, we have $\hh_{\phi,v}(z) = 0$. Moreover,
it is immediate to show that
\begin{equation}
\label{local canonical height sum}
\hh_{\phi}(z) = \sum_{v\in M_K} \hh_{\phi,v}(z).
\end{equation}
For a proof of the existence of the limit in \eqref{local canonical height},
and of the equality in \eqref{local canonical height sum}, see
\cite{Call-Goldstine}.

The following result is crucial for Section~\ref{second proof for function
fields}.
\begin{lemma}
\label{rational heights}
For each $z\in K$, and for each $\phi\in K[X]$ with $d:=\deg(\phi)>1$, we have
$\hh_{\phi}(z)\in\Q$.
\end{lemma}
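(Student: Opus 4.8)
\emph{Proof plan.} The plan is to reduce the statement to a place-by-place computation via \eqref{local canonical height sum} and then to solve an explicit linear recursion. First I would use \eqref{local canonical height sum}, which writes $\hh_{\phi}(z)=\sum_{v\in M_K}\hh_{\phi,v}(z)$, together with the fact recalled just before that formula that $\hh_{\phi,v}(z)=0$ for all but finitely many $v\in M_K$. Hence it suffices to prove $\hh_{\phi,v}(z)\in\Q$ for a single place $v$, and then to add up the finitely many nonzero (rational) contributions.

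So fix $v\in M_K$ and write $\phi=a_dX^d+\dots+a_0$ with $a_d\ne 0$ and $d>1$. Since $|\cdot|_v$ is non-archimedean, I would first record that there is a real number $B_v\ge 1$, depending only on $\phi$ and $v$, such that every $w$ with $|w|_v>B_v$ satisfies both $|\phi(w)|_v=|a_d|_v\,|w|_v^d$ and $|\phi(w)|_v>|w|_v$; this only requires $B_v$ to exceed $1$, each $|a_i/a_d|_v$ for $i<d$, and $|a_d|_v^{-1/(d-1)}$. Next I would split according to whether the forward orbit $\{\phi\iter{n}(z):n\ge 0\}$ is bounded with respect to $|\cdot|_v$. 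If it is bounded, the numerator of the limit \eqref{local canonical height} is bounded while $\deg(\phi)^n\to\infty$, so $\hh_{\phi,v}(z)=0\in\Q$. If it is unbounded, the orbit must eventually exceed $B_v$, say $|\phi\iter{N}(z)|_v>B_v$, and by the two inequalities above it then stays above $B_v$ and strictly increases for all $n\ge N$. Writing $\lambda_n:=\log|\phi\iter{n}(z)|_v^{N_v}$ and $c:=\log|a_d|_v^{N_v}$, for $n\ge N$ this yields the exact recursion $\lambda_{n+1}=d\lambda_n+c$, hence $\lambda_n+\tfrac{c}{d-1}=d^{\,n-N}\bigl(\lambda_N+\tfrac{c}{d-1}\bigr)$, and since $\lambda_n>0$ for $n\ge N$ we get
\[
\hh_{\phi,v}(z)=\lim_{n\to\infty}\frac{\lambda_n}{d^n}=\frac{1}{d^{\,N}}\Bigl(\lambda_N+\frac{c}{d-1}\Bigr).
\]
By the normalization of the absolute values on $K$ recalled at the start of this section, $\log|\phi\iter{N}(z)|_v$ and $\log|a_d|_v$ lie in $\Q$ (indeed in $\Z$), so $\lambda_N$, $c$, and therefore $\hh_{\phi,v}(z)$ are rational; summing over $v$ then gives $\hh_{\phi}(z)\in\Q$.

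The main point to get right, and essentially the only step beyond bookkeeping, is the non-archimedean ``escape dichotomy'': that once $|\phi\iter{n}(z)|_v$ surpasses the threshold $B_v$ the leading term dominates, collapsing the iteration to the clean linear recursion for $\lambda_n$, while otherwise the orbit stays bounded and the local height vanishes. This is exactly the mechanism behind the existence of the limits in \eqref{local canonical height} and \eqref{local canonical height sum} established in \cite{Call-Goldstine}, so once it is in place the solution of the recursion and the rationality of the resulting expression are routine.
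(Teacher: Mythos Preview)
Your proof is correct and follows essentially the same approach as the paper: decompose into local heights via \eqref{local canonical height sum}, use the non-archimedean escape dichotomy to get an exact formula $\hh_{\phi,v}(z)=d^{-N}\bigl(\log|\phi\iter{N}(z)|_v^{N_v}+\tfrac{\log|a_d|_v^{N_v}}{d-1}\bigr)$ once the orbit escapes, and conclude rationality from $\log|\cdot|_v\in\Q$. The only difference is that the paper imports the escape formula from \cite[Lemma~4.4]{siegel}, whereas you derive it directly from the ultrametric inequality; your parenthetical ``indeed in $\Z$'' is a bit stronger than the paper's stated normalization $\log|z|_v\in\Q$, but this is harmless for the argument.
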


\begin{proof}
For each $v\in M_K$, there exists $M_v > 0$ such that
$\hh_{\phi,v}(z) > 0$ if and only if there exists $n\in \N$ such that
$|\phi\iter{n}(z)|_v > M_v$, and moreover, in this case 
\[
\hh_{\phi,v}(\phi\iter{n}(z)) = \log |\phi\iter{n}(z)|_v +
 \frac{ \log|\delta_{d}|_v}{d -1},
\]
where $\delta_{d}$ is the leading coefficient of $\phi$. For a proof of this
claim, see \cite[Lemma 4.4]{siegel} (actually, in \cite{siegel} the above
claim is proved only for Drinfeld modules, but that proof works identically
for all polynomials defined over a function field in any characteristic).

We claim that the above fact guarantees that $\hh_{\phi,v}(z) \in \Q$. 
Indeed, if $\hh_{\phi,v}(z) > 0$, then there exists $n\in \N$ such that
$|\phi\iter{n}(z)|_v > M_v$. So, 
\begin{equation}
\hh_{\phi,v}(z) = \frac{\hh_{\phi,v}(\phi\iter{n}(z))}{d^n} 
= \frac{\log |\phi\iter{n}(z)|_v +\frac{\log|\delta_{d}|_v}{d -1}}{d^n} \in \Q.
\end{equation}
Since $\hh_{\phi}(z)$ is the sum of finitely many local heights
$\hh_{\phi,v}(z)$, we conclude that $\hh_{\phi}(z)\in \Q$.
\end{proof}

The following
result about canonical heights of non-preperiodic points for
non-\emph{isotrivial} polynomials will be used later.
\begin{defi}
\label{isotrivial}
We say a polynomial $\phi \in K[X]$ is isotrivial over $E$ if there
exists a linear $\ell
\in \Kbar[X]$ such that $\ell\circ \phi \circ \ell\iter{-1} \in \Ebar[X]$.
\end{defi}
Benedetto proved that a non-isotrivial polynomial has nonzero canonical height
at its nonpreperiodic points \cite[Thm.\ B]{Bene}:

\begin{lemma}
\label{Benedetto}
  Let $\phi\in K[X]$ with $\deg(\phi)\ge 2$, and let
$z\in \overline{K}$. If $\phi$ is non-isotrivial over $E$, then
$ \hh_{\phi}(z) = 0$ if and only if $z$ is preperiodic for~$\phi$.
\end{lemma}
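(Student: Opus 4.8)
The statement is an equivalence, and one implication is immediate: if $z$ is preperiodic then the forward orbit $\OO_\phi(z)$ is finite, so $h(\phi\iter{k}(z))$ is bounded, and dividing by $\deg(\phi)^k\to\infty$ gives $\hh_\phi(z)=0$ --- this uses nothing about isotriviality. So the content is the reverse implication. Since the orbit of $z\in\Kbar$ lies in the finite extension $K(z)$, which is again a function field of transcendence degree one over $E$ with $\phi$ still non-isotrivial over $E$, I may assume $z\in K$; let $K_0$ be the field of constants of $K$, so that $K_0$ is algebraically closed in $K$ and algebraic over $E$ (hence $K_0[X]\subseteq\Ebar[X]$).

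Assume $\hh_\phi(z)=0$ but $z$ is not preperiodic, so $\OO_\phi(z)$ is infinite; I aim for a contradiction with non-isotriviality. The first step confines the orbit to a fixed finite-dimensional $K_0$-subspace of $K$. By \eqref{local canonical height sum} we have $\hh_\phi(z)=\sum_{v\in M_K}\hh_{\phi,v}(z)$, and each $\hh_{\phi,v}(z)$ is nonnegative, being a limit of quotients of nonnegative quantities by positive ones in \eqref{local canonical height}; hence $\hh_{\phi,v}(z)=0$ for every $v$. By the computation in the proof of Lemma~\ref{rational heights}, this means $|\phi\iter{n}(z)|_v\le M_v$ for all $n$, where $M_v=1$ outside a fixed finite set $T$ of places (those at which $\phi$ has bad reduction or $z$ is non-integral). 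Thus every element of $\OO_\phi(z)$ lies in the Riemann--Roch space $L(D)$ for one fixed effective divisor $D$ supported on $T$ of bounded degree; by Riemann--Roch, $V:=L(D)$ is a finite-dimensional $K_0$-vector space, and $1\in V$. If $K_0$ is finite, then $V$ is finite and $\OO_\phi(z)$ is finite, a contradiction; so assume $K_0$ infinite.

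Now $\phi$ carries the infinite orbit $\OO_\phi(z)$ into itself, hence into $V$. Fixing a $K_0$-basis $1=e_1,\dots,e_m$ of $V$ and writing $\phi=\sum_i c_iX^i$, the condition $\phi(v)\in V$ for $v=\sum a_ke_k\in V$ translates, after expanding $\phi(v)$ in a $K_0$-basis of the finite-dimensional $K_0$-span of the products $c_ie_{j_1}\cdots e_{j_\ell}$, into the simultaneous vanishing of finitely many polynomials in $K_0[a_1,\dots,a_m]$ of degree at most $\deg(\phi)$. Identifying $V$ with $\A^m_{K_0}$, the orbit is therefore an infinite set lying on a $K_0$-subvariety $Y$ (its Zariski closure) on which $\phi$ induces a dominant self-morphism, and this morphism is again defined over $K_0$. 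One then shows --- and this is the heart of the argument --- that such a configuration forces $\phi$ to be isotrivial: in the extreme case $Y=\A^m$ (which happens in particular whenever the orbit is Zariski dense in $V$, and always when $m=1$) all the obstruction polynomials vanish identically, so $\phi(V)\subseteq V$; then restricting to the line $K_0\cdot 1$ and using that $K_0$ is infinite gives $c_i\in V$ for all $i$, and since the leading coefficient of $\phi\iter{n}$ is an unbounded power of $c_d$ confined to the fixed space $V$ we get $c_d\in K_0$; after conjugating to a monic polynomial, analysing how the full list of coefficients of the iterates $\phi\iter{n}$ grows while remaining in a fixed finite-dimensional $K_0$-space drives all coefficients of this conjugate of $\phi$ into $K_0$. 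The intermediate cases $1\le\dim Y<m$ reduce to this one by induction on $\dim Y$ after choosing coordinates adapted to $Y$ and exploiting the semiconjugacy of the $K_0$-rational system $(Y,\phi|_Y)$ to $(\A^1,\phi)$.

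The main obstacle is precisely the last step --- passing from ``$\OO_\phi(z)$ is an infinite subset of a fixed finite-dimensional $K_0$-subspace of $K$'' to ``$\phi$ is isotrivial''. This is delicate only once that subspace has $K_0$-dimension at least $2$, since then the orbit need not be Zariski dense and one cannot immediately conclude that the obstruction polynomials vanish; controlling the lower-dimensional Zariski closures and carrying out the combinatorial bookkeeping on the coefficients of the iterates $\phi\iter{n}$ (which is where the hypothesis $\deg(\phi)\ge 2$ is essential) is exactly the content of Benedetto's theorem \cite{Bene}, which is why the paper simply cites it rather than reproving it.
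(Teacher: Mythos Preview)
The paper does not prove Lemma~\ref{Benedetto} at all: it is stated as a direct citation of \cite[Thm.~B]{Bene}, with no argument supplied. You correctly observe this in your final paragraph.

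Your write-up therefore goes beyond the paper by sketching the architecture of Benedetto's argument. The first half --- the easy direction, the reduction to $z\in K$, the vanishing of every local height via \eqref{local canonical height sum} and nonnegativity, and the resulting confinement of $\OO_\phi(z)$ to a fixed Riemann--Roch space $L(D)$ of finite $K_0$-dimension --- is correct and is indeed how Benedetto begins. The second half, deducing isotriviality from the existence of an infinite orbit inside a fixed finite-dimensional $K_0$-subspace of $K$, is where your sketch becomes a placeholder: the Vandermonde step giving $c_i\in V$ and the transcendence argument forcing $c_d\in K_0$ are fine, but the passage from there to ``all coefficients of a linear conjugate of $\phi$ lie in $K_0$'' (your ``analysing how the full list of coefficients of the iterates $\phi\iter{n}$ grows'') and the inductive handling of the case $1\le\dim Y<m$ are only gestured at. You acknowledge this yourself.

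So your proposal is not a self-contained proof, but it is an honest outline of Benedetto's strategy, and it accurately reports what the paper does --- namely, cite \cite{Bene} and move on. There is nothing further to compare.
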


We state one more preliminary result, which is proved in
\cite[Lemma 6.8]{lines}.
\begin{lemma}
\label{height 0 for isotrivial}
Let $\phi\in K[X]$ be isotrivial over $E$, and let $\ell$ be as
in Definition~\ref{isotrivial}.  If $z\in\overline{K}$ satisfies
$\hh_{\phi}(z) = 0$, then $\ell(z)\in\Ebar$.
\end{lemma}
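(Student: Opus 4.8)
The plan is to conjugate the problem into the constant field. Since $\phi$ is isotrivial, Benedetto's theorem (Lemma~\ref{Benedetto}) does not apply, but isotriviality gives us something just as useful: after conjugating by $\ell$ the polynomial has \emph{constant} coefficients. Set $\psi:=\ell\circ\phi\circ\ell\iter{-1}$, which lies in $\Ebar[X]$ by the definition of isotriviality (Definition~\ref{isotrivial}), and set $w:=\ell(z)$, so that the goal becomes $w\in\Ebar$. First I would pass to a finite extension $K'$ of $K$ containing the finitely many elements of $\Kbar$ involved, namely the coefficients of $\ell$ and of $\psi$ together with $w$. Then $K'$ is again a function field of transcendence degree $1$; let $F'$ be its field of constants, that is, the algebraic closure of $E$ in $K'$, so that $F'\subseteq\Ebar$. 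Since $\psi\in K'[X]\cap\Ebar[X]$ and $\Ebar\cap K'=F'$, we have $\psi\in F'[X]$.

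Next I would transfer the height hypothesis across the conjugation. From $\psi\iter{n}=\ell\circ\phi\iter{n}\circ\ell\iter{-1}$ we get $\psi\iter{n}(w)=\ell\bigl(\phi\iter{n}(z)\bigr)$ for all $n$, and since a linear polynomial changes the Weil height by a bounded amount (a standard property of heights; see \cite{Lang_diophantine}), dividing by $\deg(\phi)^n=\deg(\psi)^n$ and letting $n\to\infty$ gives $\hh_{\psi,K'}(w)=\hh_{\phi,K'}(z)$. The latter is $0$, because the canonical height over $K'$ vanishes exactly when the canonical height over $K$ does; hence $\hh_{\psi,K'}(w)=0$.

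The core of the argument is then the special case: if $\psi\in F'[X]$ has degree $d\ge 2$ and $w\in K'$ satisfies $\hh_{\psi,K'}(w)=0$, then $w\in F'$. I would prove this by contradiction. If $w\notin F'$, then $w$ is transcendental over $F'$ (because $F'$ is algebraically closed in $K'$), so $w$ has a pole at some place $v_0\in M_{K'}$, i.e.\ $|w|_{v_0}>1$. All places of $M_{K'}$ are non-archimedean and trivial on $F'$, so $\psi$ has $v_0$-integral coefficients and $v_0$-unit leading coefficient; the ultrametric inequality then gives $|\psi(u)|_{v_0}=|u|_{v_0}^{d}$ whenever $|u|_{v_0}>1$, and iterating yields $|\psi\iter{n}(w)|_{v_0}=|w|_{v_0}^{d^n}$ for every $n$. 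Substituting this into the definition \eqref{local canonical height} of the local canonical height gives $\hh_{\psi,v_0}(w)=N_{v_0}\log|w|_{v_0}>0$. Since every local canonical height is nonnegative, the decomposition \eqref{local canonical height sum} forces $\hh_{\psi,K'}(w)\ge\hh_{\psi,v_0}(w)>0$, a contradiction. Therefore $w=\ell(z)\in F'\subseteq\Ebar$, as claimed.

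The argument is mostly bookkeeping once $K'$ is chosen, and the two steps that need genuine care are: (i) the observation that conjugating by the specific $\ell$ of Definition~\ref{isotrivial} lands us in $F'[X]$, which is where isotriviality and the identity $\Ebar\cap K'=F'$ enter; and (ii) the estimate $|\psi\iter{n}(w)|_{v_0}=|w|_{v_0}^{d^n}$, which works only because $\psi$ has constant coefficients and hence good reduction at every place of $K'$. I do not anticipate any serious obstacle; the local-height manipulations parallel those already used in the proof of Lemma~\ref{rational heights}.
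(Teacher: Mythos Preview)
Your proof is correct and follows essentially the same approach as the paper (which cites \cite[Lemma~6.8]{lines} and whose commented-out sketch does exactly this): conjugate by $\ell$ to land in the constant field, transfer the canonical height via the bounded-difference property for linear maps, and then observe that a non-constant function-field element has a pole at which the local canonical height is strictly positive. Your version is simply more explicit about the ultrametric computation $|\psi\iter{n}(w)|_{v_0}=|w|_{v_0}^{d^n}$ and the passage to a suitable finite extension $K'$, but the strategy is identical.
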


\begin{defi}
\label{isotriviality with a point}
With the notation as in Lemma~\ref{height 0 for isotrivial}, we call
the pair $(\phi,z)$ isotrivial. Furthermore, if $F\subset K$ is any
subfield, and there exists a linear polynomial $\ell\in\Kbar[X]$ such that
$\ell\circ\phi\circ\ell\iter{-1} \in \Fbar[X]$ and
$\ell(z)\in \Fbar$,
then we call the pair $(\phi,z)$ isotrivial over $F$.
\end{defi}


\section{Proof of Theorem~\ref{first extension}}
\label{second proof for function fields}

We first prove two easy claims.
\begin{claim}
\label{key claim 0}
Let $E$ be any subfield of $K$, and assume that $(f,x_0)$ and $(g,y_0)$ are
isotrivial over $E$. If $\OO_f(x_0)\cap\OO_g(y_0)$ is
infinite, then there exists a linear $\mu\in\Kbar[X]$ such that
$\mu\circ f\circ \mu\iter{-1},\mu\circ g\circ \mu\iter{-1}\in \Ebar[X]$ and
$\mu(x_0),\mu(y_0)\in \Ebar$.
\end{claim}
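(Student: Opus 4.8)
The plan is to show that the two given isotrivializing linear polynomials can be taken to be the \emph{same} linear polynomial, up to a linear conjugacy that is innocuous because it does not disturb the hypotheses. By Definition~\ref{isotriviality with a point}, there are linear $\ell_1,\ell_2\in\Kbar[X]$ with $\ell_1\circ f\circ\ell_1\iter{-1}\in\Ebar[X]$, $\ell_1(x_0)\in\Ebar$, $\ell_2\circ g\circ\ell_2\iter{-1}\in\Ebar[X]$, and $\ell_2(y_0)\in\Ebar$. Put $\tilde f:=\ell_1\circ f\circ\ell_1\iter{-1}$, $\tilde g:=\ell_2\circ g\circ\ell_2\iter{-1}$, $\tilde x_0:=\ell_1(x_0)$, $\tilde y_0:=\ell_2(y_0)$. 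Writing $\lambda:=\ell_1\circ\ell_2\iter{-1}$, we have $\OO_{\tilde f}(\tilde x_0)=\ell_1(\OO_f(x_0))$ and $\lambda(\OO_{\tilde g}(\tilde y_0))=\ell_1(\OO_g(y_0))$, so $\OO_{\tilde f}(\tilde x_0)\cap\lambda(\OO_{\tilde g}(\tilde y_0))$ is infinite. The goal is to prove $\lambda\in\Ebar[X]$: once we know that, $\mu:=\ell_1$ already works, since then $\mu\circ g\circ\mu\iter{-1}=\lambda\circ\tilde g\circ\lambda\iter{-1}\in\Ebar[X]$ and $\mu(y_0)=\lambda(\tilde y_0)\in\Ebar$.

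So the heart of the matter is: if $\tilde f,\tilde g\in\Ebar[X]$ are nonlinear, $\tilde x_0,\tilde y_0\in\Ebar$, $\lambda\in\Kbar[X]$ is linear, and $\OO_{\tilde f}(\tilde x_0)$ meets $\lambda(\OO_{\tilde g}(\tilde y_0))$ in infinitely many points, then $\lambda\in\Ebar[X]$. First I would check the degenerate possibilities. If $\OO_{\tilde f}(\tilde x_0)$ is finite, then it contains infinitely many points of the infinite set $\lambda(\OO_{\tilde g}(\tilde y_0))$ only vacuously, so we may assume both orbits are infinite. Write $\lambda(X)=aX+b$ with $a\in\Kbar^*$, $b\in\Kbar$. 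The infinitely many coincidences give infinitely many pairs $(m,n)$ with $\tilde f\iter m(\tilde x_0)=a\,\tilde g\iter n(\tilde y_0)+b$. The left side lies in $\Ebar$; hence $a\,\tilde g\iter n(\tilde y_0)+b\in\Ebar$ for infinitely many $n$. Since $\tilde g\in\Ebar[X]$ and $\tilde y_0\in\Ebar$, each $\tilde g\iter n(\tilde y_0)\in\Ebar$; and as $\OO_{\tilde g}(\tilde y_0)$ is infinite we may pick two distinct such values $u\ne u'$ in $\Ebar$ with $au+b,\ au'+b\in\Ebar$. Subtracting, $a(u-u')\in\Ebar$, so $a\in\Ebar$; then $b=(au+b)-au\in\Ebar$ as well. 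Thus $\lambda\in\Ebar[X]$, as desired.

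I expect the main subtlety to be purely bookkeeping: keeping track of which conjugates land in $\Ebar[X]$ versus $\Kbar[X]$, and making sure the infinitude of the intersection survives the reductions (it does, because applying the linear $\ell_1$ to both orbits is a bijection on $\C$, or rather on $\Kbar$, and the images of the two orbits still intersect infinitely). No height machinery is needed here — this claim is genuinely elementary, and the real work of Theorem~\ref{first extension} comes later when one combines it with Lemma~\ref{Benedetto}, Lemma~\ref{height 0 for isotrivial}, and Lemma~\ref{rational heights}. One should also note that $\Ebar$ here means the algebraic closure of $E$ inside $\Kbar$, so it is closed under the field operations used above, which is all that the argument requires.
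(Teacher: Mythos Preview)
Your proof is correct and follows essentially the same approach as the paper: both take two isotrivializing linears $\ell_1,\ell_2$ (the paper's $\mu_1,\mu_2$), observe that the composite $\lambda=\ell_1\circ\ell_2\iter{-1}$ sends infinitely many points of $\Ebar$ to $\Ebar$, and conclude by two-point interpolation that $\lambda\in\Ebar[X]$, whence $\mu:=\ell_1$ works. The only difference is that you spell out the interpolation step explicitly by writing $\lambda(X)=aX+b$ and solving for $a,b$, whereas the paper states the conclusion $\mu_2\circ\mu_1\iter{-1}\in\Ebar[X]$ directly.
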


\begin{proof}[Proof of Claim~\ref{key claim 0}.]
  We know that there exist linear $\mu_1,\mu_2\in \Kbar[X]$ such that
  $f_1:=\mu_1\circ f\circ\mu_1\iter{-1}\in\Ebar[X]$ and
  $g_1:=\mu_2\circ g\circ \mu_2\iter{-1}\in\Ebar[X]$, and
  $x_1:=\mu_1(x_0)\in\Ebar$ and $y_1:=\mu_2(y_0)\in\Ebar$.  Thus
  $\OO_{f_1}(x_1)=\mu_1(\OO_f(x_0))$ and
  $\OO_{g_1}(y_1)=\mu_2(\OO_g(y_0))$.  Since
  $\OO_f(x_0)\cap\OO_g(y_0)$ is infinite, there are infinitely many
  pairs $(z_1,z_2)\in\Ebar\times\Ebar$ such that
  $\mu_1\iter{-1}(z_1)=\mu_2\iter{-1}(z_2)$.  Thus
  $\mu:=\mu_2\circ\mu_1\iter{-1}\in\Ebar[X]$.  Hence
\[
\mu_1\circ g\circ\mu_1\iter{-1}=
\mu\iter{-1}(\mu_2\circ g\circ\mu_2\iter{-1})\mu\in\Ebar[X],
\]
and 
\[
\mu_1(y_0)=(\mu_1\circ\mu_2\iter{-1})(y_1)=\mu\iter{-1}(y_1)\in\Ebar,
\]
as desired.
\end{proof}

\begin{claim}
\label{key claim}
If $\OO_f(x_0)\cap\OO_g(y_0)$ is infinite, then there exist subfields
$E\subset F\subset K$ such that $F$ is a function field of
transcendence degree $1$ over $E$, and there exists a linear
polynomial $\mu\in \Kbar[X]$ such that $\mu\circ f\circ
\mu\iter{-1},\mu\circ g\circ\mu\iter{-1}\in \Fbar[X]$, and
$\mu(x_0),\mu(y_0)\in\Fbar$, and either $(f,x_0)$ or $(g,y_0)$ is not
isotrivial over $E$.
\end{claim}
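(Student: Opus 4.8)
The plan is to start from the infinite intersection $\OO_f(x_0)\cap\OO_g(y_0)$ and consider the field $F_0:=E_0(x_0,y_0)$, where $E_0$ is the prime field $\Q$ together with the coefficients of $f$ and $g$; more precisely, I would let $F_0$ be the subfield of $K$ generated over $\Q$ by $x_0$, $y_0$, and the coefficients of $f$ and $g$. Since $K$ has transcendence degree $1$ over $E$ in the relevant situations, $F_0$ is a finitely generated extension of $\Q$ of transcendence degree $0$ or $1$ over $\Q$. If the transcendence degree is $0$, then $x_0,y_0\in\Qbar$ and $f,g\in\Qbar[X]$, which is precisely the case excluded by the hypothesis of Theorem~\ref{first extension}; so we may assume $F_0$ has transcendence degree exactly $1$ over $\Q$. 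Then $F:=F_0$ is a function field of transcendence degree $1$ over the field $E:=\overline{\Q}\cap F$ (the field of constants of $F$), and we may take $\mu=X$ since $f,g\in F[X]$ and $x_0,y_0\in F$ by construction.

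The one thing that still needs to be arranged is the clause ``either $(f,x_0)$ or $(g,y_0)$ is not isotrivial over $E$''. Here I would argue by contradiction: if both $(f,x_0)$ and $(g,y_0)$ are isotrivial over $E$, then by Claim~\ref{key claim 0} there is a linear $\mu\in\Kbar[X]$ with $\mu\circ f\circ\mu\iter{-1},\mu\circ g\circ\mu\iter{-1}\in\Ebar[X]$ and $\mu(x_0),\mu(y_0)\in\Ebar$. Now $E=\overline{\Q}\cap F\subseteq\overline{\Q}$, so $\Ebar=\Qbar$, and the displayed conclusion says exactly that there is a linear $\mu$ with $\mu(x_0),\mu(y_0)\in\Qbar$ and $\mu\circ f\circ\mu\iter{-1},\mu\circ g\circ\mu\iter{-1}\in\Qbar[X]$. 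Replacing $\mu$ by its inverse (and noting that the functional inverse of a linear polynomial over $\Qbar$ is again linear over $\Qbar$), this contradicts the standing hypothesis of Theorem~\ref{first extension}. Hence at least one of $(f,x_0)$, $(g,y_0)$ fails to be isotrivial over $E$, and the claim follows.

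The main point to be careful about is the reduction that lets us take $F$ to be a \emph{subfield} of $K$ with $E$ its field of constants and $\mu=X$: one must check that the field generated by $x_0,y_0$ and the coefficients of $f,g$ genuinely has transcendence degree $1$ over $\Q$ given that $x_0,y_0,f,g$ cannot all be pushed into $\Qbar$ by a single linear conjugation—and in fact we only need transcendence degree $\ge 1$, which is immediate since otherwise $F_0\subseteq\Qbar$ and the hypothesis of Theorem~\ref{first extension} is violated with $\mu=X$. The rest of the argument is a direct application of Claim~\ref{key claim 0} together with the observation $\overline{E}=\overline{\Q}$ when $E\subseteq\overline{\Q}$. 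I do not expect any serious obstacle; the only subtlety is bookkeeping about constant fields of function fields and making sure the linear polynomials and their inverses stay defined over the right fields.
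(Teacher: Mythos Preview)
Your argument has a genuine gap: you assert that the field $F_0$ generated over $\Q$ by $x_0$, $y_0$, and the coefficients of $f$ and $g$ has transcendence degree at most $1$ over $\Q$, apparently on the grounds that ``$K$ has transcendence degree $1$ over $E$ in the relevant situations''. But Theorem~\ref{first extension} places no such hypothesis on $K$; it only assumes $K$ has characteristic~$0$. (The standing assumption ``$K$ is a function field of transcendence degree $1$ over $E$'' is the setup of Section~\ref{Silverman proof}, not of Theorem~\ref{first extension}.) So $F_0$ is a finitely generated extension of $\Q$ of some transcendence degree $s\ge 0$, and nothing prevents $s>1$. When $s>1$, $F_0$ is not a function field of transcendence degree~$1$ over its constant field $\overline{\Q}\cap F_0$, and your construction of $(E,F)$ collapses.

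The paper's proof supplies exactly the missing idea: rather than trying to land on $\Qbar$ in a single step, one builds a tower
\[
K_s\subset K_{s-1}\subset\dots\subset K_1\subset K_0
\]
with $K_0=F_0$, with $K_s$ a number field, and with each $K_i/K_{i+1}$ of transcendence degree~$1$. Starting at the top with $\mu=X$, if both $(f,x_0)$ and $(g,y_0)$ are isotrivial over $K_1$ one invokes Claim~\ref{key claim 0} to conjugate everything into $\overline{K_1}$, and then repeats the question one level down. Either the descent halts at some level~$i$ (giving $F=K_i$, $E=K_{i+1}$, and the accumulated conjugating linear $\mu$), or it reaches $K_s\subset\Qbar$, which contradicts the hypothesis of Theorem~\ref{first extension} exactly as in your last paragraph. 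Your argument is the special case $s=1$ of this descent; for $s>1$ the tower is essential.
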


\begin{proof}[Proof of Claim~\ref{key claim}.]
Let $K_0$ be a finitely generated subfield of $K$ such that $f,g\in K_0[X]$ and
$x_0,y_0\in K_0$. Then there exists a finite tower of field subextensions:
\[
K_s\subset K_{s-1}\subset\dots\subset K_1\subset K_0
\]
such that $K_s$ is a number field, and for each $i=0,\dots,s-1$, the extension
$K_i/K_{i+1}$ is finitely generated of transcendence degree $1$. Using
Claim~\ref{key claim 0} and the hypotheses of Theorem~\ref{first extension}, we
conclude that there exists $i=0,\dots,s-1$, and there exists a linear
$\mu\in\bar{K_0}[X]$ such that
$\mu\circ f\circ\mu\iter{-1},\mu\circ g\circ\mu\iter{-1}\in \bar{K_i}[X]$, and
$\mu(x_0),\mu(y_0)\in \bar{K_i}$, and either $(f,x_0)$ or $(g,y_0)$ is not
isotrivial over $K_{i+1}$.
\end{proof}

\begin{proof}[Proof of Theorem~\ref{first extension}.]
Let $E$, $F$ and $\mu$ be as in the conclusion of Claim~\ref{key claim}. At the
expense of replacing $f$ and $g$ with their respective conjugates by $\mu$, and
at the expense of replacing $F$ by a finite extension, we may assume that
$f,g\in F[X]$, and $x_0,y_0\in F$, and $(f,x_0)$ is not isotrivial over $E$. 

Let $d_1:=\deg(f)$ and $d_2:=\deg(g)$. We construct the canonical heights
$\hh_f$ and $\hh_g$ associated to the polynomials $f$ and $g$, with respect to
the set of absolute values associated to the function field $F/E$.
Because $(f,x_0)$ is non-isotrivial, and because $x_0$ is not
preperiodic for $f$ (note that $\OO_f(x_0)\cap\OO_g(y_0)$ is infinite),
Lemma~\ref{Benedetto} yields that
$H_1 := \hh_f(x_0) > 0$. Moreover, if $H_2:=\hh_g(y_0)$, then using
Lemma~\ref{rational heights}, we have that $H_1,H_2\in \Q$. 
Because there exist infinitely many pairs $(m,n)\in\N\times \N$ such that
$f\iter{m}(x_0) = g\iter{n}(y_0)$, Proposition~\ref{can} $(a)-(b)$ yields that
\begin{equation}
\label{first boundedness}
|d_1^{m} \cdot H_1 - d_2^{n} \cdot H_2| \text{ is bounded}
\end{equation}
for infinitely many pairs $(m,n)\in \N\times \N$. Because
$H_1, H_2 \in \Q$, we conclude that there exist \emph{finitely} many
rational numbers $\gamma_1,\dots,\gamma_s$ such that
\[
\gamma_i=d_1^{m} \cdot H_1 - d_2^{n} \cdot H_2
\]
for each pair $(m,n)$ as in \eqref{first boundedness}. (We are using the fact
 that there are finitely
many rational numbers of bounded denominator, and bounded absolute value.)
Therefore, there exists a rational number
$\gamma:=\gamma_i$ (for some $i=1,\dots,s$) such that 
\begin{equation}
\label{second boundedness}
d_1^{m} H_1 - d_2^{n} H_2 = \gamma.
\end{equation}
for infinitely many pairs $(m,n)\in \N\times \N$. Hence, the line
$L\subset \A^2$ given by the
equation $H_1 \cdot X - H_2 \cdot Y = \gamma$ has infinitely many
points in common with the rank-$2$ subgroup
$\Gamma := \{(d_1^{k_1},d_2^{k_2}) \text{ : } k_1,k_2\in \mathbb{Z}\}$ of
$\mathbb{G}_m^2$.
Using Corollary~\ref{lang thm}, we obtain that $\gamma=0$.
 Because there are infinitely many pairs $(m,n)$ satisfying
 \eqref{second boundedness}, and because $H_1\ne 0$, we conclude that
 there exist positive integers $m_0$ and $n_0$ such that
 $d_1^{m_0} = d_2^{n_0}$; thus $\deg(f\iter{m_0}) = \deg(g\iter{n_0})$. Because
$\OO_f(x_0)\cap\OO_g(y_0)$ is infinite, we can find $k_0,\ell_0\in\N$ such that
$\OO_{f\iter{m_0}}(f\iter{k_0}(x_0))\cap\OO_{g\iter{n_0}}(g\iter{\ell_0}(y_0))$
is infinite. Because $\deg(f\iter{m_0})=\deg(g\iter{n_0})$, we can apply
\cite[Thm.\ $1.1$]{lines} and conclude the proof of
Theorem~\ref{first extension}.
\end{proof}

\begin{remark}
Theorem~\ref{first extension} holds essentially by the same argument as above,
if $x_0$ is not in the $v$-adic filled Julia set of $f$, where $v$ is any place
of a function field $K$ over a field $E$.
\end{remark}

\begin{remark}
One can show that if $f$ is a linear polynomial, and $g$ is any nonisotrivial
polynomial of degree larger than one, then $\OO_f(x_0)\cap\OO_g(y_0)$ is
finite. This assertion fails if $g$ is isotrivial, as shown by the infinite
intersection $\OO_{X+1}(0)\cap\OO_{X^2}(2)$.
\end{remark}


\section{The dynamical Mordell--Lang problem}
\label{S:ML}

In this section we discuss topics related to Question~\ref{semigroup}.
We give examples where this question has a negative answer, and we show that
the Mordell--Lang conjecture can be reformulated as a particular instance of
Question~\ref{semigroup}.  Then we discuss the connection between
Question~\ref{semigroup} and the existence of invariant subvarieties, and
the connection between this question, Zhang's conjecture, and critically
dense sets.

\subsection{Examples}

There are several situations where Question~\ref{semigroup} has a negative
answer.  Let $\Phi(x,y)=(2x,y)$ and $\Psi(x,y)=(x,y^2)$ be endomorphisms of
$\A^2$; let $S$ be the semigroup generated by $\Phi$ and $\Psi$.  If $\Delta$
is the diagonal subvariety of $\A^2$, then
$\Delta(\C)\cap \OO_S((1,2))=\{\Phi^{2^n}\Psi^n((1,2))\text{ : }n\in\N_0\}$,
which yields a negative answer to Question~\ref{semigroup}.  A similar example
occurs for $X=E\times E$ with $E$ any commutative algebraic group, where
$\Phi(P,Q)=(P+P_0,Q)$ and $\Psi(P,Q)=(P,2Q)$ with $P_0\in E(\C)$ a nontorsion
point: letting $\Delta$ be the diagonal in $E\times E$, and $S$ the semigroup
generated by $\Phi$ and $\Psi$, we have $\Delta(\C)\cap \OO_S((0,P_0))=
\{\Phi^{2^n}\Psi^n((0,P_0))\text{ : }n\in\N_0\}$ (where $0$ is the identity
element of the group $E(\C)$).
One can produce similar examples in which $S$ contains infinite-order elements
which restrict to automorphisms on some positive-dimensional subvariety of $X$.
However, there is an important situation where $S$ consists of automorphisms
but Question~\ref{semigroup} has an affirmative answer, namely when $S$
consists of translations on a semiabelian variety $X$; we discuss this below.


\subsection{Mordell--Lang conjecture}

We show that the Mordell--Lang conjecture is a particular case of our
Question~\ref{semigroup}.  This conjecture, proved by Faltings \cite{Faltings}
and Vojta \cite{V1}, describes the intersection of subgroups and subvarieties
of certain algebraic groups:

\begin{thm} \label{ML}
Let $X$ be a semiabelian variety over $\C$, let $V$ be a subvariety, and let
$\Gamma$ be a finitely generated subgroup of $X(\C)$.  Then $V(\C)\cap \Gamma$
is the union of finitely many cosets of subgroups of $\Gamma$.
\end{thm}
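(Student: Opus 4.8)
The plan is to prove Theorem~\ref{ML} by combining Noetherian induction with the diophantine core due to Faltings and Vojta. The key input, call it $(\star)$, is the following: if $V$ is irreducible and $V(\C)\cap\Gamma$ is Zariski dense in $V$, then $V$ is a translate, by a point of $\Gamma$, of a connected semiabelian subvariety of $X$. Granting $(\star)$, I would induct on $\dim V$. Decompose $V$ into irreducible components $V_1,\dots,V_k$. For each $i$: if $V_i(\C)\cap\Gamma$ is Zariski dense in $V_i$, then $(\star)$ gives $V_i=\gamma_i+X_i$ with $\gamma_i\in\Gamma$ and $X_i$ a connected semiabelian subvariety, whence $V_i(\C)\cap\Gamma=\gamma_i+(X_i(\C)\cap\Gamma)$ is a coset of a subgroup of $\Gamma$; if instead $V_i(\C)\cap\Gamma$ is not dense in $V_i$, then its Zariski closure is a proper subvariety of $V_i$ of strictly smaller dimension, to which the inductive hypothesis applies. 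Since $V(\C)\cap\Gamma=\bigcup_i\bigl(V_i(\C)\cap\Gamma\bigr)$ and each recursion strictly drops the dimension, this exhibits $V(\C)\cap\Gamma$ as a finite union of cosets of subgroups of $\Gamma$.

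For $(\star)$ I would follow Vojta's reduction of the semiabelian case to the abelian case together with the toric part: choosing a compactification of $X$ adapted to the exact sequence $0\to\G_m^t\to X\to A\to 0$, the assertion becomes a statement about integral points on subvarieties of semiabelian varieties, whose heart is a height inequality. In the abelian case one fixes a symmetric ample line bundle, so that the N\'eron--Tate height makes $\Gamma\otimes\mathbb{R}$ into a Euclidean space; a generalized Mumford gap principle forces points of $V(\C)\cap\Gamma$ of large height to cluster in narrow cones, and Faltings's product theorem---a quantitative statement, refining the Thue--Siegel--Roth and Schmidt subspace machinery, about the geometry of subvarieties of products of projective spaces---shows such clustering is impossible unless $V$ is already a coset. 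Vojta then upgraded both the product theorem and the accompanying height estimates so as to handle semiabelian $X$, where no N\'eron--Tate height is available on the toric part.

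The main obstacle is precisely this diophantine inequality (Faltings's product theorem together with Vojta's arithmetic general-position estimate); everything else is formal algebraic-group geometry and Noetherian induction. The argument is ineffective and is of an entirely different character from the tools (Siegel's theorem, Bilu--Tichy, polynomial decomposition, canonical heights) used elsewhere in this paper, so we simply invoke \cite{Faltings} and \cite{V1}. For the purposes of this section it is enough to record that Theorem~\ref{ML} is exactly the instance of Question~\ref{semigroup}(a) in which $X$ is semiabelian and $S$ is a finitely generated group of translations: there $\OO_S(\alpha)=\alpha+\Gamma_0$ with $\Gamma_0$ the subgroup generated by the translation vectors, and ``finite union of cosets of subsemigroups of $S$'' becomes ``finite union of cosets of subgroups of $\Gamma$.''
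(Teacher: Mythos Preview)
Your proposal is correct and matches the paper's treatment: the paper does not prove Theorem~\ref{ML} at all but simply records it as the Mordell--Lang conjecture and cites Faltings~\cite{Faltings} and Vojta~\cite{V1}, exactly as you do in your concluding paragraph. Your sketch of the Noetherian-induction reduction and the Faltings--Vojta diophantine core $(\star)$ goes beyond what the paper provides but is accurate and consistent with those references.
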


Here a \emph{semiabelian variety} is a connected algebraic group $X$ which
admits an exact sequence $1 \to \G_m^k \to X \to A \to 1$ with $A$ an abelian
variety and $k\in\N_0$.  Any such $X$ is commutative.

Let $X$ be a semiabelian variety over $\C$, let $\Gamma$ be the subgroup of
$X(\C)$ generated by $P_1,\dots,P_r\in X(\C)$, let $\tau_i$ be the
translation-by-$P_i$ map on $X$ for each $i=1,\dots,r$, and let
$S:=\langle\tau_1,\dots,\tau_r\rangle$ be the finitely generated commutative
semigroup generated by the translations $\tau_i$.  Let $\overline{S}$ be
the group generated by the automorphisms $\tau_i$ for $i=1,\dots,r$;
thus $\Gamma=\OO_{\overline{S}}(0)$.
Plainly Theorem~\ref{ML} implies an affirmative answer to
Question~\ref{semigroup}(b).  Conversely, Theorem~\ref{ML} follows from
Question~\ref{semigroup}(b) applied to the semigroup generated by all
translations $\pm \tau_i$.  It can be shown that Theorem~\ref{ML} also follows
quickly from Question~\ref{semigroup}(a) applied to the semigroups $S$ and
$S^{-1}$.

\subsection{Invariant subvarieties}

Suppose Question~\ref{semigroup} has an affirmative answer for some
$X$, $V$, $S$, and $\alpha$.  Then $V(\C)\cap \OO_S(\alpha)$ is the
union of finitely many sets of the form $T_0:=\OO_{S_0\cdot\Phi}(\alpha)$,
with $\Phi\in S$ and $S_0$ a subsemigroup of $S$.  For any such $T_0$,
let $V_0$ be the Zariski closure of $T_0$, so $V_0\subset V$ and
$S_0(V_0)\subset V_0$ (since $S_0(T_0)\subset T_0$).  Thus, the Zariski
closure of $V(\C)\cap\OO_S(\alpha)$ consists of finitely many points and
finitely many positive-dimensional subvarieties $V_0\subset V$, where for
each $V_0$ there is an infinite subsemigroup $S_0$ of $S$ such that
$S_0(V_0)\subset V_0$.

Conversely, if the Zariski closure of $V(\C)\cap\OO_S(\alpha)$ has this form,
and if each $S_0$ has finite index in $S$ (as happens, for instance, if $S$ is
cyclic), then Question~\ref{semigroup}(a) has an
affirmative answer.  We do not know whether this implication remains true in
general when $S_0$ has infinite index in $S$.

\subsection{Zhang's conjecture and critically dense sets}

Zhang considers the action of an endomorphism $\Phi$ of an
irreducible projective variety $X$ over a number field $K$, under the
hypothesis that $\Phi$ is \emph{polarizable} in the sense that
$\Phi^*\scL\simeq\scL^q$ for some line bundle $\scL$ and some $q>1$.
Zhang conjectures that $\OO_{\Phi}(\alpha)$ is Zariski dense in $X$ for some
$\alpha\in X({\overline K})$ \cite[Conj.~4.1.6]{ZhangLec}.
Let $Y$ be the union of all proper subvarieties $V$ of $X$ which are
$\Phi$-\emph{preperiodic} (i.e., $\Phi^{k+N}(V)=\Phi^k(V)$
for some $k\ge 0$ and $N\ge 1$).  We now show that $Y(\Kbar)$ consists of the
points $\alpha\in X(\Kbar)$ for which $\OO_{\Phi}(\alpha)$ is not Zariski
dense in $X$; thus Zhang's conjecture amounts to saying $X\ne Y$.

Pick $\alpha\in Y(\Kbar)$, and let $V\subset Y$ be a proper $\Phi$-preperiodic
subvariety of $X$ such that $\alpha\in V(\Kbar)$; moreover, pick $k\ge 0$ and
$N\ge 1$ such that $\Phi^{k+N}(V)=\Phi^k(V)$.  Then
$\OO_{\Phi^N}(\Phi^k(\alpha))\subset \Phi^k(V)$, so
\[
\OO_{\Phi}(\alpha)\subset \bigcup_{i=0}^{k+N-1} \Phi^i(V).
\]
Since $V\ne X$ and $X$ is irreducible, it follows that $\OO_{\Phi}(\alpha)$ is
not Zariski dense in $X$.

Conversely, pick $\alpha\in X(\Kbar)\setminus Y(\Kbar)$, and let $Z$ be
the Zariski closure of $\OO_{\Phi}(\alpha)$.  One can show that polarizable
endomorphisms are closed, so $\Phi^n(Z)$ is a closed subvariety of $X$ for
each $n\ge 1$.
Since $\Phi^n(\OO_{\Phi}(\alpha))\subset \Phi^{n-1}(\OO_{\Phi}(\alpha))$, it
follows that $\Phi^n(Z)\subset \Phi^{n-1}(Z)$.
Hence $Z\supset \Phi(Z)\supset \Phi^2(Z)\supset\dots$ is a descending chain
of closed subvarieties of $X$, so $\Phi^{N+1}(Z)=\Phi^N(Z)$ for some $N\ge 0$,
whence $Z$ is $\Phi$-preperiodic.  Since $\alpha\notin Y(\Kbar)$, it follows
that $Z=X$.

If we replace $K$ by $\C$, we suspect Zhang's conjecture holds even without the
polarizability condition, and also if $X$ is allowed to be quasiprojective.
Let $Y$ be the union of the proper subvarieties $V$ of $X$ for which there
exists $N\in\N$ with $\Phi^N(V)\subset V$.  The above argument shows that
$Y(\C)$ consists of the points $\alpha\in X(\C)$ for which
$\OO_{\Phi}(\alpha)$ is not Zariski
dense in $X$.  If $\Phi$ is a closed morphism (as in the case of Zhang's
polarizable endomorphisms), then each subvariety $V$ for which
$\Phi^N(V)\subset V$ is actually $\Phi$-preperiodic.

On the other hand, a positive answer to our Question~\ref{semigroup} yields
that each Zariski dense orbit $\OO_{\Phi}(\alpha)$ intersects any proper
subvariety $V$ of the irreducible quasiprojective variety $X$ in at most
finitely many points.  Indeed, if $\OO_{\Phi}(\alpha)\cap V(\C)$ were infinite,
then there exists $k,N\in \N$ such that
$\OO_{\Phi^N}(\Phi^k(\alpha))\subset V(\C)$.  Therefore
\[
\OO_{\Phi}(\alpha)\subset \{\Phi^i(\alpha)\text{ : } 0\le i\le k-1\}
\bigcup\left(\cup_{j=0}^{N-1} \Phi^j(V)\right),
\]
and since $\dim(V)<\dim(X)$ it follows that $\OO_{\Phi}(\alpha)$ is not Zariski
dense in $X$.

Thus, if Question~\ref{semigroup} has a positive answer for an
irreducible quasiprojective variety $X$, then any Zariski dense orbit
$\OO_{\Phi}(\alpha)$ is \emph{critically dense}, in the terminology of
\cite[Def.\ 3.6]{KeeRogSta} and \cite[\S 5]{SriCut}:
\begin{defi}
\label{critically dense}
Let $U$ be an infinite set of closed points of an integral scheme $X$. Then we
say that $U$ is critically dense if every infinite subset of $U$ has Zariski
closure equal to $U$.
\end{defi}

\end{document}